\def\section{\@startsection{section}{1}%
  \z@{.7\linespacing\@plus\linespacing}{.5\linespacing}%
  {\normalfont\large\bfseries\centering}}
\def\subsubsection{\@startsection{subsubsection}{1}%
  \z@{.7\linespacing\@plus\linespacing}{.5\linespacing}%
  {\normalfont}}
\renewenvironment{abstract}{%
  \ifx\maketitle\relax
    \ClassWarning{\@classname}{Abstract should precede
      \protect\maketitle\space in AMS document classes; reported}%
  \fi
  \global\setbox\abstractbox=\vtop \bgroup
    \normalfont\Small
    \list{}{\labelwidth\z@
      \leftmargin3pc \rightmargin\leftmargin
      \listparindent\normalparindent \itemindent\z@
      \parsep\z@ \@plus\p@
      
    }%
    \item[\hskip\labelsep\bfseries\abstractname.]%
}{%
  \endlist\egroup
  \ifx\@setabstract\relax \@setabstracta \fi
}
\def\@setabstract{\@setabstracta \global\let\@setabstract\relax}
\def\@setabstracta{%
  \ifvoid\abstractbox
  \else
    \skip@0\p@ \advance\skip@-\lastskip
    \advance\skip@-\baselineskip \vskip\skip@
    \box\abstractbox
    \prevdepth\z@ % because \abstractbox is a vtop
  \fi
}
\renewcommand{\tocsection}[3]{%
  \indentlabel{\@ifnotempty{#2}{\bfseries\ignorespaces#1 #2\quad}}\bfseries#3}
\renewcommand{\tocsubsection}[3]{%
  \indentlabel{\@ifnotempty{#2}{\ignorespaces#1 #2\quad}}#3}
\newcommand\@dotsep{4.5}
\def\@tocline#1#2#3#4#5#6#7{\relax
  \ifnum #1>\c@tocdepth % then omit
  \else
    \par \addpenalty\@secpenalty\addvspace{#2}%
    \begingroup \hyphenpenalty\@M
    \@ifempty{#4}{%
      \@tempdima\csname r@tocindent\number#1\endcsname\relax
    }{%
      \@tempdima#4\relax
    }%
    \parindent\z@ \leftskip#3\relax \advance\leftskip\@tempdima\relax
    \rightskip\@pnumwidth plus1em \parfillskip-\@pnumwidth
    #5\leavevmode\hskip-\@tempdima{#6}\nobreak
    \leaders\hbox{$\m@th\mkern \@dotsep mu\hbox{.}\mkern \@dotsep mu$}\hfill
    \nobreak
    \hbox to\@pnumwidth{\@tocpagenum{\ifnum#1=1\bfseries\fi#7}}\par% <-- \bfseries for \section page
    \nobreak
    \endgroup
  \fi}
\renewcommand\csname r@tocindent0\endcsname{0pt}
\def\l@subsection{\@tocline{2}{0pt}{2.5pc}{5pc}{}}
\def\l@subsubsection{\@tocline{2}{0pt}{4pc}{5pc}{}}
\newsavebox{\@brx}
\newcommand{\llangle}[1][]{\savebox{\@brx}{\(\m@th{#1\langle}\)}%
	\mathopen{\copy\@brx\kern-0.5\wd\@brx\usebox{\@brx}}}
\newcommand{\rrangle}[1][]{\savebox{\@brx}{\(\m@th{#1\rangle}\)}%
	\mathclose{\copy\@brx\kern-0.5\wd\@brx\usebox{\@brx}}}
\newtheorem{thm}{Theorem}[section]
\newtheorem{prop}[thm]{Proposition}
\newtheorem{cor}[thm]{Corollary}
\newtheorem{lem}[thm]{Lemma}
\newtheorem{rem}[thm]{Remark}
\newtheorem{defn}[thm]{Definition}
\newtheorem*{theorem*}{Theorem}
\numberwithin{equation}{section}
\newcommand{\tR}{{R^s }}
\newcommand{\btR}{{\underline{R}^s }}
\newcommand{\ric}{{\rm Ric}}
\newcommand{\lR}{\underline{R}}
\newcommand{\lchi}{\underline{\chi}}
\newcommand{\ddbar}{\partial\overline{\partial}}
\newcommand{\tr}{{\mathrm{tr}}}
\newcommand{\bL}{{\mathbb L}}
\newcommand{\bR}{{\mathbb R}}
\newcommand{\mC}{{\mathcal C}}
\newcommand{\mD}{{\mathfrak{D}}}
\newcommand{\mE}{{\mathcal E}}
\newcommand{\mH}{{\mathcal H}}
\newcommand{\mM}{{\mathcal M}}
\newcommand{\mV}{{\mathcal V}}
\title{Twisted Calabi functional and twisted Calabi flow}
\author{Jie He}
\address{School of Mathematics and Physics, Beijing University of Chemical Technology, Beijing 10016}
\email{hejie@amss.ac.cn}
\author{Haozhao Li}
\address{Institute of Geometry and Physics, and Key Laboratory of Wu Wen-Tsun Mathematics, School of Mathematical Sciences, University of Science and Technology of China}
\email{hzli@ustc.edu.cn}
\thanks{
	Jie He is supported by Mathematics Tianyuan Fund of National Natural Science Foundation of China   No. 12426674.
	Haozhao Li is supported by NSFC grant No. 12426669, No. 12471058,  the CAS Project for Young Scientists
	in Basic Research (YSBR-001), and the Fundamental Research Funds
	for the Central Universities.}
\newtheorem{theorem}{Theorem}[section]
\newtheorem{lemma}{Lemma}[section]
\newtheorem{remark}{Remark}[section]
\newtheorem{definition}{Definition}[section]
\begin{document}
%%%%%%%%%%%%%%%%
%\date{\today}

%\thanks{
%\textbf{Acknowledgment:} }

%--------------------------------------------------------------------------------------
%--------------------------------------------------------------------------------------

%-----------------------------------------------------------------------------------
%abstract---------------------------------------------------------------------------

\maketitle

\begin{abstract}
	This paper investigates the twisted Calabi functional and the associated twisted Calabi flow on compact K\"ahler manifolds. Our main contributions are threefold: first, we establish the convexity of the twisted Calabi functional at its critical points; second, we prove the short-time existence of the twisted Calabi flow; and third, we demonstrate the stability of this flow in the neighborhood of twisted constant scalar curvature K\"ahler metrics. These results provide an   analytic foundation for studying the twisted Calabi flow and  resolve questions about its local behavior.
\vskip 2.5mm
\noindent {\bf Keywords.} twisted Calabi flow, canonical K\"ahler metrics, geometric flow.

\vskip 2.5mm
\noindent {\bf MSC 2020.} Primary 53C21 ; Secondary 53C55, 58J05, 58J60 .
\end{abstract}

\tableofcontents

\section{Introduction}
Twisted scalar curvature serves as a natural generalization of the K\"ahler scalar curvature, introduced to address the existence problem of constant scalar curvature K\"ahler (cscK) metrics on compact K\"ahler manifolds. The notion of twisted scalar curvature originated in the work of J. Fine on cscK metrics over fibered surfaces (see \cite{MR2144537, MR2318622}), as well as in the study of the K\"ahler-Ricci flow on K\"ahler surfaces by Song and Tian (see \cite{MR2357504}). In 2009, J. Stoppa (see \cite{MR2581360}) provided a moment map interpretation of the twisted scalar curvature and established a slope stability criterion for the existence of twisted cscK metrics. In 2015, R. Dervan proved uniform stability for such metrics in \cite{MR3564626}.

In 2015, X. X.  Chen redefined a more general notion of twisted cscK metrics in \cite{MR3858468}, where he treated them as forming a continuity path connecting the cscK equation and the $J$-equation, and proposed a comprehensive program for studying classical cscK metrics through their twisted counterparts.
 The openness (\cite{MR3858468, MR3980270, MR4136486}) and closedness (\cite{MR4301557}) of this path ultimately led to the resolution of several long-standing conjectures, including Donaldson's geodesic stability conjecture and the coercivity conjecture for cscK metrics \cite{MR4301557, MR4301558, chen2018constant}. This continuity path was also employed by Chen, P\u{a}un, and Zeng to investigate the uniqueness of extremal and cscK metrics in \cite{chen2015deformation}.

The twisted Calabi flow is the gradient flow of the twisted K-energy functional associated with the twisted scalar curvature. Within his framework for twisted cscK metrics \cite{MR3858468}, Chen proposed the study of this flow. In the case of Riemann surfaces, J. Pook in \cite{MR3514059} established its long-time existence and convergence. The aim of this paper is to investigate the twisted Calabi flow and related problems on compact K\"ahler manifolds of arbitrary dimensions.

\subsection{Some notations}
To state our results, we first introduce some basic notations. Let $(M, \omega)$ be a compact K\"ahler manifold of complex dimension $m$. For another K\"ahler metric $\chi$ on $M$ and a parameter $s \in [0,1]$, the $(\chi, s)$-twisted Ricci curvature is defined as
\[
\mathrm{Ric}^s := s \mathrm{Ric} - (1 - s)\chi.
\]
The corresponding twisted scalar curvature is given by
\[
R^s := s R(\omega) - (1 - s) \tr_\omega \chi.
\]
A K\"ahler metric $\omega$ is called a twisted cscK metric if it satisfies
\begin{align}\label{eq:twisted-csck}
	s R(\omega) - (1 - s) \tr_\omega \chi = s \underline{R} - (1 - s) \underline{\chi},
\end{align}
where $\underline{R}$ and $\underline{\chi}$ denote the averages of $R(\omega)$ and $\tr_\omega \chi$ over $M$, respectively. Clearly, twisted cscK metrics generalize classical cscK metrics.

The twisted Calabi functional is defined as the $L^2$-norm of the deviation of the twisted scalar curvature from its average:
\[
\mC^s(\omega) = \int_M \left( s R(\omega) - (1 - s) \tr_\omega \chi - s \underline{R} + (1 - s) \underline{\chi} \right)^2 \omega^m.
\]For $s \in [0,1]$, the twisted K-energy functional is given by
\[
\mM^s(\varphi) = s \mM(\varphi) + (1 - s) J_\chi(\varphi), \quad \varphi \in \mH,
\]
where $\mH = \{\varphi \in C^\infty(M, \mathbb{R}) : \omega_\varphi := \omega + i\partial\bar\partial \varphi > 0 \}$ is the space of K\"ahler potentials, $\mM$ is the Mabuchi K-energy \cite{MR0796483}, and $J_\chi$ is the $J$-functional \cite{MR1772078}. The twisted cscK equation is the Euler-Lagrange equation for $\mM^s$, and the twisted Calabi flow is its gradient flow:
\[
\frac{\partial \varphi}{\partial t} = R^s(\varphi) - \underline{R}^s,
\]
where $R_\varphi$ is the scalar curvature of $\omega_\varphi$, $\tr_\varphi \chi = \tr_{\omega_\varphi} \chi$, and $\underline{R}^s = s \underline{R} - (1 - s) \underline{\chi}$.

\begin{defn}
	The twisted Lichnerowicz operator $\bL^s$ is defined by
	\[
	\bL^s(f) = s \mD^*\mD f + (1 - s) i \bar\partial^* (\nabla^{1,0} f \lrcorner \chi), \quad \forall f \in C^\infty(M, \mathbb{R}),
	\]
	where $\mD f = \bar\partial \nabla^{1,0} f$, and $\mD^*, \bar\partial^*$ denote the formal adjoints of $\mD$ and $\bar\partial$, respectively.
\end{defn}

For $s \in(0,1)$, the operator $\bL^s$ is a fourth-order semi-positive elliptic operator with $\ker \bL^s = \{\text{constants}\}$. It generalizes the classical Lichnerowicz operator (see \cref{Classical Lichnerowicz operator}) and plays a central role in our analysis.

\subsection{Main results}
We now present our main results. First, we compute the first variation of the twisted Calabi functional.

\begin{thm}[\cref{thm:first-variation}]
	Assume $s \in (0,1)$. The first variation of the twisted Calabi functional is given by
	\[
	D(\mC^s)_\omega(\varphi) = - \int_M \langle \varphi, \bL^s(R^s) \rangle \, \omega^m,
	\]
	where $\bL^s$ is the twisted Lichnerowicz operator. Consequently, $\omega$ is a critical point of $\mC^s$ if and only if it is a twisted cscK metric.
\end{thm}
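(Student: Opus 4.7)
The plan is to differentiate $\mC^s(\omega_t)$ along the path $\omega_t := \omega + ti\partial\bar\partial\varphi$ at $t=0$ and then integrate by parts so that all derivatives are moved off $\varphi$, reassembling the operator $\bL^s$. As a preliminary observation, $\underline{R}^s$ is a topological constant along the deformation: both $\underline{R}$ and $\underline{\chi}$ depend only on the cohomology classes $[\omega]$ and $[\chi]$, so $\tfrac{d}{dt}|_{t=0}\underline{R}^s = 0$. Using the standard K\"ahler variation formulas $\tfrac{d}{dt}|_{t=0}\omega_t^m = \Delta_\omega\varphi\cdot\omega^m$, $\tfrac{d}{dt}|_{t=0}R(\omega_t) = -\Delta_\omega^2\varphi - \langle i\partial\bar\partial\varphi,\mathrm{Ric}\rangle_\omega$, and $\tfrac{d}{dt}|_{t=0}\mathrm{tr}_{\omega_t}\chi = -\langle i\partial\bar\partial\varphi,\chi\rangle_\omega$, the product rule yields
\begin{align*}
	D(\mC^s)_\omega(\varphi) = 2\int_M (R^s-\underline{R}^s)\,\delta R^s\,\omega^m + \int_M (R^s-\underline{R}^s)^2\,\Delta_\omega\varphi\,\omega^m,
\end{align*}
with $\delta R^s := -s\Delta_\omega^2\varphi - \langle i\partial\bar\partial\varphi,\,s\,\mathrm{Ric} - (1-s)\chi\rangle_\omega$.

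The heart of the proof is an integration-by-parts identity that transfers all four derivatives of $\varphi$ onto $R^s-\underline{R}^s$, producing $\varphi\cdot\bL^s(R^s)$ up to an exact term. For the classical $s\mD^*\mD$ half, this is Calabi's computation: one integrates the $-s\Delta_\omega^2\varphi$ term by parts twice, handles $\langle i\partial\bar\partial\varphi,\mathrm{Ric}\rangle$ by one integration by parts together with Bianchi's identity, and observes that the residual first-order drift $f\langle\nabla f,\nabla\varphi\rangle = \tfrac12\langle\nabla f^2,\nabla\varphi\rangle$ (with $f = R^s - \underline{R}^s$) integrates by parts to cancel against the $sf^2\Delta_\omega\varphi$ contribution arising from the volume-form variation. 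For the twist half, the key manipulation is
\begin{align*}
	\int_M f\,\langle i\partial\bar\partial\varphi,\chi\rangle\,\omega^m = \int_M \langle i\bar\partial f,\,\nabla^{1,0}\varphi\lrcorner\chi\rangle\,\omega^m = \int_M \varphi\cdot i\bar\partial^{*}(\nabla^{1,0}f\lrcorner\chi)\,\omega^m,
\end{align*}
using $d\chi = 0$, Hermiticity of $\chi$, and the reality of $f$ and $\varphi$; the final expression is exactly the twist component of $\bL^s(f)$.

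The principal technical difficulty is bookkeeping the first-order drift terms $\langle\nabla R^s,\nabla\varphi\rangle$ that arise from integrating the fourth-order derivatives by parts, and verifying that the volume-form correction $\int_M(R^s-\underline{R}^s)^2\Delta_\omega\varphi\,\omega^m$ absorbs them exactly; this cancellation is precisely the reason $\bL^s$ has the specific form stated in its definition, and it implicitly uses that $\bL^s$ is self-adjoint with respect to $\omega^m$. Once the variation formula is in place, the characterization of critical points is immediate: since $\bL^s$ is semi-positive elliptic with $\ker\bL^s = \{\text{constants}\}$ for $s\in(0,1)$, the vanishing of $D(\mC^s)_\omega(\varphi)$ for all $\varphi\in C^\infty(M,\mathbb{R})$ is equivalent to $\bL^s(R^s)=0$, hence to $R^s$ being constant, which is exactly the twisted cscK equation \eqref{eq:twisted-csck}.
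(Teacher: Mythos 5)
Your overall plan is the right one and coincides with the paper's: differentiate $\mC^s$, apply the standard first-variation formulas for $R$, $\tr_\omega\chi$, and $\omega^m$, and then integrate by parts to reassemble $\bL^s$. The paper organizes this slightly differently and more systematically: it first rewrites the integrand as $-\bL^s(\varphi)\,(R^s-\underline{R}^s)$, using the identity $s\langle\partial R,\partial\varphi\rangle-\langle\partial R^s,\partial\varphi\rangle=(1-s)\langle\partial\tr_\omega\chi,\partial\varphi\rangle$ together with Lemma~\ref{lem:tensoridentity}, and then invokes self-adjointness of $\bL^s$ (Lemma~\ref{lem:twistedlich}) to move the operator onto $R^s$. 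Your proposal instead tries to push all four derivatives onto $R^s$ directly.

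There are two concrete problems with your bookkeeping. First, the displayed ``key manipulation'' for the twist half,
\begin{equation*}
\int_M f\,\langle i\partial\bar\partial\varphi,\chi\rangle\,\omega^m
\;=\;\int_M \varphi\cdot i\bar\partial^{*}\bigl(\nabla^{1,0}f\lrcorner\chi\bigr)\,\omega^m,
\end{equation*}
is false as a standalone identity. Using Lemma~\ref{lem:tensoridentity} (which crucially uses $d\chi=0$) and the self-adjointness of $i\bar\partial^{*}(\nabla^{1,0}\cdot\lrcorner\chi)$, one has
\begin{equation*}
\int_M \varphi\cdot i\bar\partial^{*}\bigl(\nabla^{1,0}f\lrcorner\chi\bigr)\,\omega^m
=\int_M f\,\langle i\partial\bar\partial\varphi,\chi\rangle\,\omega^m
+\int_M f\,\langle\partial\tr_\omega\chi,\partial\varphi\rangle\,\omega^m,
\end{equation*}
so the two sides differ by the Bianchi-type drift $\int_M f\langle\partial\tr_\omega\chi,\partial\varphi\rangle\,\omega^m$. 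This missing piece is not negligible; it is exactly what the $(1-s)$-fraction of the volume-form variation has to supply. Second, your description of the cancellation is off: the volume-form variation contributes $\tfrac12 f^2\Delta\varphi$ (with no factor of $s$), and after integrating by parts it produces $-f\langle\partial R^s,\partial\varphi\rangle=-s f\langle\partial R,\partial\varphi\rangle+(1-s)f\langle\partial\tr_\omega\chi,\partial\varphi\rangle$. Only the $s$-part cancels the drift from the $R$-variation ``classical half''; the $(1-s)$-part is needed to close up the twist half via Lemma~\ref{lem:tensoridentity}. So the drift cancellation couples the two halves and cannot be run independently as your outline suggests. The underlying idea is sound and the final formula is correct, but the two displayed claims as stated would not pass; you should reorganize along the lines of the paper's identity $s\langle\partial R,\partial\varphi\rangle-\langle\partial R^s,\partial\varphi\rangle=(1-s)\langle\partial\tr_\omega\chi,\partial\varphi\rangle$ before invoking the twist lemma. (Also note the normalization: with your $\mC^s=\int(R^s-\underline{R}^s)^2\omega^m$ the variation comes out as $-2\int\varphi\,\bL^s(R^s)\,\omega^m$; the paper's Section~3 uses $\tfrac12\int(R^s-\underline{R}^s)^2\omega^m$ to match the stated coefficient.)
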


\begin{rem}
	In \cite{MR3858468}, Chen introduced the notion of twisted extremal K\"ahler metrics, defined by the condition that $\nabla^{1,0}(s R_\omega - (1 - s) \tr_\omega \chi)$ is a holomorphic vector field. However, such metrics do not arise as critical points of the twisted Calabi functional, in contrast to the classical Calabi functional.
\end{rem}

As a consequence of the first variation formula, the twisted Calabi flow decreases the Mabuchi distance in $\mH$.

\begin{cor}
	Let $\varphi(\tau) : [0,1] \to \mH$ be a smooth curve, and let $\varphi(t, \tau)$ be its deformation under the twisted Calabi flow. Denote by $l(t)$ the length of the curve $\varphi_t(\tau) = \varphi(t, \tau)$. Then
	\[
	\frac{d l(t)}{d t} = - \int_0^1 \left( \int_M \left( \frac{\partial \varphi}{\partial \tau} \right)^2 \omega_\varphi^m \right)^{-\frac{1}{2}} \left( \int_M \frac{\partial \varphi}{\partial \tau} \bL^s \left( \frac{\partial \varphi}{\partial \tau} \right) \omega_\varphi^m \right) d\tau \leq 0.
	\]
	Hence, the twisted Calabi flow strictly decreases the distance in $\mH$ unless the curve degenerates to a point.
\end{cor}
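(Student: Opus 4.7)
The plan is to differentiate the length under the integral sign in $\tau$, reducing the corollary to the single pointwise-in-$\tau$ identity
\[
\frac{d}{dt}\int_M \eta^2\, \omega_\varphi^m \;=\; -\, 2\int_M \eta\, \bL^s \eta\, \omega_\varphi^m, \qquad \eta := \frac{\partial\varphi}{\partial\tau}.
\]
Once this is established, dividing by the Mabuchi norm $\bigl(\int_M \eta^2\,\omega_\varphi^m\bigr)^{1/2}$ and integrating over $\tau \in [0,1]$ yields precisely the claimed formula for $dl/dt$, while the semi-positivity of $\bL^s$ with kernel equal to the constants (noted after \cref{Classical Lichnerowicz operator} in the definition) gives the inequality.

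I would start by expanding the $t$-derivative as the sum of two contributions, one from $\partial_t \eta$ and one from the variation of the volume form $\partial_t \omega_\varphi^m = \Delta_\varphi(R^s-\underline{R}^s)\,\omega_\varphi^m$. Since $\underline{R}^s$ depends only on the K\"ahler class and is $\tau$-independent, commuting the partial derivatives in the flow equation $\partial_t \varphi = R^s - \underline{R}^s$ gives
\[
\partial_t \eta \;=\; \frac{\partial}{\partial\tau}\bigl(R^s(\omega_\varphi)-\underline{R}^s\bigr) \;=\; D(R^s)_{\omega_\varphi}(\eta),
\]
the linearization of the twisted scalar curvature in the direction $\eta$.

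The core step is then to verify the algebraic identity
\[
2\int_M \eta\, D(R^s)_{\omega_\varphi}(\eta)\,\omega_\varphi^m \;+\; \int_M \eta^2\, \Delta_\varphi(R^s-\underline{R}^s)\,\omega_\varphi^m \;=\; -\, 2\int_M \eta\, \bL^s \eta\, \omega_\varphi^m.
\]
Decomposing $D(R^s)(\eta) = s\,DR(\eta) - (1-s)\,D(\tr_\omega\chi)(\eta)$ and pairing the principal fourth-order parts with $\eta$ reproduces $-2\int_M \eta\,\bL^s\eta\,\omega_\varphi^m$ upon using the self-adjointness of $\mD^*\mD$ together with the defining formula $\bL^s(f) = s\,\mD^*\mD f + (1-s)\,i\bar\partial^*(\nabla^{1,0}f \lrcorner \chi)$. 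The remaining first-order terms involving $\nabla R^s$ from the linearization then cancel against the volume-form contribution after a single integration by parts; this cancellation is the same mechanism underlying the proof of \cref{thm:first-variation}.

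The main technical obstacle is the careful bookkeeping in this algebraic identity: one must match the lower-order Ricci and $\chi$-curvature terms produced by linearizing $R^s$ against the $\Delta_\varphi(R^s - \underline{R}^s)$ factor carried by the volume-form variation. Once this is done, the conclusion follows: since $\bL^s$ is semi-positive, $dl/dt \leq 0$, with equality forcing $\bL^s\eta(\tau,\cdot) = 0$ and hence $\eta(\tau,\cdot)$ constant on $M$ for every $\tau$. In that case $\omega_{\varphi(\tau,t)}$ is independent of $\tau$, so the curve in $\mH$ projects to a single point in the space of K\"ahler metrics, i.e.\ it degenerates to a point.
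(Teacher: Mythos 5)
Your proposal follows the same route as the paper: differentiate the energy density $\int_M \eta^2\,\omega_\varphi^m$ in $t$, split into the $\partial_t\eta = D(R^s)(\eta)$ term and the volume-form term $\eta^2\Delta_\varphi(R^s-\underline R^s)\,\omega_\varphi^m$, integrate the latter by parts, and observe the cancellation against the $\langle\partial R^s,\partial\eta\rangle$ piece of the linearization so that the remainder assembles into $-2\int_M \eta\,\bL^s\eta\,\omega_\varphi^m$; this is exactly the computation in the paper's proof of \cref{thm:distance-decreasing}. Your treatment of the equality case is in fact slightly cleaner than the paper's: from $\bL^s\eta=0$ one only gets $\eta$ constant on $M$, and you correctly note that this means the curve is stationary in the space of K\"ahler metrics (constant shifts of potentials), rather than asserting $\eta\equiv 0$ outright.
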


Another application of the variation result is to prove the convexity of twisted Mabuchi Energy along geodesics(see \cref{thm:convexity-K-energy}), which has been proved by Berman-Berndsston in \cite{MR3671939} along weak geodescis and Berman-Darvas-Lu(\cite{MR3687111})
in the space
$\mE^p$.

Similar to the classical case, the twisted Calabi functional is convex near its critical points.

\begin{thm}
	Assume $s \in (0,1)$. At a critical point of $\mC^s$, the Hessian is given by
	\[
	\mathrm{Hess}\, \mC^s(\varphi, \phi) = \int_M \langle \bL^s(\varphi), \bL^s(\phi) \rangle \, \omega^m, \quad \forall \varphi, \phi \in C^\infty(M, \mathbb{R}).
	\]
	As consequences:
	\begin{enumerate}
		\item $\mathrm{Hess}\, \mC^s$ is strictly positive in $\mH/\{constants\}$ at a critical point;
		\item Twisted cscK metrics are isolated in a K\"ahler class.
	\end{enumerate}
\end{thm}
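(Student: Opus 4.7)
The plan is to differentiate the first variation formula from Theorem~\ref{thm:first-variation} in a second direction $\phi$ and then exploit the vanishings available at a critical point. Writing $R^s_t$, $\bL^s_t$, $\omega_t^m$ for the objects attached to the perturbed metric $\omega_t = \omega + t\,i\partial\bar\partial\phi$, the first variation at the perturbed metric is
\[
D(\mC^s)_{\omega_t}(\varphi) = -\int_M \varphi\,\bL^s_t(R^s_t)\,\omega_t^m,
\]
and $\mathrm{Hess}\,\mC^s(\varphi,\phi)$ is the $t$-derivative of the right hand side at $t=0$. At a twisted cscK metric $R^s=\underline{R}^s$ is a constant, and $\bL^s_t$ annihilates constants for every $t$, so $\bL^s_t(\underline{R}^s)\equiv 0$ along the family. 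Consequently, the contributions coming from differentiating $\bL^s_t$, from differentiating $\omega_t^m$, and from the constant part of $R^s_t$ all carry a factor of either $R^s-\underline{R}^s$ or $\bL^s(R^s)$, both zero at $\omega$, and drop out. What survives is
\[
\mathrm{Hess}\,\mC^s(\varphi,\phi) = -\int_M \varphi\,\bL^s\bigl(D(R^s)_\omega(\phi)\bigr)\,\omega^m.
\]

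The remaining ingredient is the identity $D(R^s)_\omega(\phi)=-\bL^s(\phi)$ at a twisted cscK metric, which I would obtain from the classical linearization formulas
\[
DR(\phi) = -\mD^*\mD\phi + (\text{terms in }\nabla R),\qquad D(\tr_\omega\chi)(\phi) = -i\bar\partial^*(\nabla^{1,0}\phi\lrcorner\chi) + (\text{terms in }\nabla\tr_\omega\chi).
\]
Taking the combination $s(\cdot)-(1-s)(\cdot)$ reproduces $-\bL^s(\phi)$ in the principal part; the lower order corrections combine into a multiple of $\langle\nabla R^s,\nabla\phi\rangle$, which vanishes at a twisted cscK metric. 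Inserting this into the Hessian expression and invoking the formal self-adjointness of $\bL^s$ (built into its definition via $\mD^*\mD$ and $\bar\partial^*$, and consistent with its stated semi-positivity) yields the claimed formula.

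The two consequences are then immediate. Setting $\phi=\varphi$ gives $\mathrm{Hess}\,\mC^s(\varphi,\varphi)=\|\bL^s(\varphi)\|_{L^2(\omega^m)}^2\ge 0$, and since $\bL^s$ is semi-positive elliptic with $\ker\bL^s=\{\text{constants}\}$, equality holds iff $\varphi$ is constant; this is (1). For (2), strict positivity of the Hessian at $\omega$ together with its continuity in the metric makes $\omega$ a strict local minimum of the nonnegative functional $\mC^s$, which takes value $0$ at $\omega$. Any other twisted cscK metric $\omega'$ near $\omega$ would again satisfy $\mC^s(\omega')=0$, contradicting strict local minimality unless $\omega'=\omega$; hence twisted cscK metrics are isolated in the K\"ahler class.

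The principal obstacle will be the bookkeeping in the key identity of step two: tracking the precise form of the lower order corrections in the linearizations of $R$ and $\tr_\omega\chi$, and verifying that every extra term produced in the second differentiation of the first variation formula is paired with a factor that vanishes at the twisted cscK metric. Once these algebraic manipulations are under control, the remaining arguments are soft consequences of ellipticity, self-adjointness, and the strict local minimum criterion.
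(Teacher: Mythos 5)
Your proposal is correct and takes essentially the same approach as the paper: both differentiate the first variation formula at the critical point, both exploit that $R^s-\underline{R}^s=0$ and $\bL^s(R^s)=0$ to discard the terms coming from the variation of the operator and of the volume form, and both hinge on the identity $D(R^s)_\omega(\phi)=-\bL^s(\phi)$, which holds at a twisted cscK metric precisely because the lower-order correction is $\langle\partial R^s,\partial\phi\rangle$. The only real difference is presentational — the paper works through the decomposed form $-s\llangle\mD\varphi,\mD R^s\rrangle-(1-s)\llangle\nabla^{1,0}\varphi,\nabla^{1,0}R^s\rrangle_\chi$ and tracks the variation of $\nabla^{1,0}R^s$ via the identity $\nabla^{1,0}R^s\lrcorner\omega=i\bar\partial R^s$, whereas you keep $\bL^s_t$ packaged and argue at the level of the operator — but the substance and the two vanishings that make everything collapse are identical.
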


In a fixed K\"ahler class, cscK metrics (and extremal K\"ahler metrics) form an orbit under the identity component of the holomorphic automorphism group, so uniqueness holds only modulo this group action. In contrast, twisted cscK metrics are isolated due to the convexity result above.
The isolation of twisted cscK metrics is also a consequence of R. J. Berman and B. Berndtsson in \cite{MR3671939}, who proved there exists at most one twisted cscK metric in a K\"ahler class.

We now turn to the twisted Calabi flow. Short-time existence and stability near cscK metrics for the classical Calabi flow were established by Chen-He \cite{MR2405167}. Later, He \cite{MR3010280} and He-Zeng \cite{MR4259154} relaxed the regularity requirements on initial data. For the twisted Calabi flow on Riemann surfaces, Pook proved existence and convergence in \cite{MR3514059}. Following the approaches in \cite{MR2405167, MR3010280}, we establish short-time existence for the twisted Calabi flow in arbitrary dimension.

We first introduce some function spaces (see \cref{sec:short-time-existence} for precise definitions). For a Banach space $E$ and $T > 0$, define
\[
C_{1/2}([0,T], E) = \left\{ u \in C((0,T], E) \;\middle|\; t \mapsto t^{1/2} u(t) \in C([0,T], E),\ \lim_{t \to 0} t^{1/2} u = 0 \right\}.
\]
Let $c^{k,\alpha}(M)$ denote the closure of $C^\infty(M)$ in $C^{k,\alpha}(M)$ under the $\| \cdot \|_{C^{k,\alpha}}$ norm. Define
\[
E_0([0,T]) = C_{1/2}([0,T], c^\alpha(M)), \quad \| u \|_{E_0([0,T])} = \sup_{t \in [0,T]} \| t^{1/2} u(t) \|_{c^\alpha(M)},
\]
and
\[
E_1([0,T]) = \left\{ u \in C_{1/2}([0,T], c^{4,\alpha}(M)) \;\middle|\; \dot u \in E_0([0,T]) \right\},
\]
with norm
\[
\| u \|_{E_1([0,T])} = \sup_{t \in [0,T]} t^{1/2} \left( \| \dot u(t) \|_{c^\alpha(M)} + \| u(t) \|_{c^{4,\alpha}(M)} \right).
\]

\begin{thm}\label{t1.7}
	For any smooth K\"ahler metrics $\omega, \chi$ on $M$ and $s \in (0,1]$, there exist constants $T =  C(\omega, \chi) s^2$, $\epsilon = \epsilon(\omega, \chi) > 0$, and $c = c(\omega) > 0$ such that for any initial value $x \in c^{2,\alpha}(M)$ with $\| x \|_{C^{2,\alpha}(M)} \leq \epsilon$, the twisted Calabi flow equation
	\[
	\begin{cases}
		\partial_t \varphi = R^s(\omega_\varphi) - \underline{R}^s, \\
		\varphi(0) = x
	\end{cases}
	\]
	has a unique solution $\varphi(t, x) \in E_1([0,T])$. Moreover, $\varphi \in C([0,T], c^{2,\alpha}(M))$, and for any $x, y \in B_{c^{2,\alpha}(M)}(0, \epsilon)$,
	\begin{align}
		\|\varphi(t, x) - \varphi(t, y)\|_{C([0,T], c^{2,\alpha}(M))} &\leq c \| x - y \|_{c^{2,\alpha}(M)}, \label{eq:lip-cont} \\
		\|\varphi(t, x) - \varphi(t, y)\|_{E_1([0,T])} &\leq c \| x - y \|_{c^{2,\alpha}(M)}. \label{eq:E1-bound}
	\end{align}
\end{thm}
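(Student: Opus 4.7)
The strategy is to treat the twisted Calabi flow as a quasilinear fourth-order parabolic evolution whose principal linear part is $-\bL^s$, and to apply continuous maximal regularity in little H\"older spaces together with the Banach contraction principle, in the spirit of Chen--He \cite{MR2405167} and He \cite{MR3010280}. A direct variational computation shows that the principal symbol of $DR^s$ at $\omega$ matches that of $-\bL^s$, so the flow can be rewritten as
$$
\partial_t\varphi + \bL^s\varphi \;=\; F^s(\varphi),\qquad \varphi(0)=x,
$$
where $F^s(\varphi):=R^s(\omega_\varphi)-\underline{R}^s+\bL^s\varphi$ is a quasilinear source whose differential $DF^s(0)$ has order at most two. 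Note that the constant $F^s(0)=R^s(\omega)-\underline{R}^s$ need not vanish---it is the defect of $\omega$ from being twisted cscK---but it will be absorbed via the $t^{1/2}$ weight built into $E_0$.

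I then assemble the linear theory. For $s\in(0,1]$, $\bL^s$ is fourth-order strongly elliptic with kernel equal to the constants and therefore generates a strongly continuous analytic semigroup on the little H\"older space $c^\alpha(M)$. The central input I would invoke is continuous maximal regularity in the weighted little H\"older spaces: for $T$ small, the map
$$
E_1([0,T])\longrightarrow E_0([0,T])\times c^{2,\alpha}(M),\qquad u\longmapsto\bigl(\dot u+\bL^s u,\,u(0)\bigr)
$$
is an isomorphism whose operator norm is controlled by the parabolic constants of $\bL^s$. The weight $t^{1/2}$ is precisely the parabolic scaling appropriate to a fourth-order operator smoothing $c^{2,\alpha}$ data, reflecting the semigroup bound $\|e^{-t\bL^s}x\|_{c^{4,\alpha}}\lesssim t^{-1/2}\|x\|_{c^{2,\alpha}}$.

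Short-time existence then reduces to a fixed-point problem for $\Phi(\varphi):=(\partial_t+\bL^s)^{-1}\bigl(F^s(\varphi),\,x\bigr)$. The central nonlinear estimate I must establish is
$$
\|F^s(\varphi_1)-F^s(\varphi_2)\|_{E_0([0,T])} \;\leq\; C\bigl(\|\varphi_1\|_{E_1}+\|\varphi_2\|_{E_1}+T^{1/2}\bigr)\|\varphi_1-\varphi_2\|_{E_1},
$$
valid on a ball $\|\varphi\|_{E_1([0,T])}\leq R$, with $R$ and $\|x\|_{c^{2,\alpha}}$ small enough to keep $\omega_\varphi$ uniformly K\"ahler. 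Expanding $R(\omega_\varphi)$ and $\tr_{\omega_\varphi}\chi$ as rational expressions in $\partial^{\leq 4}\varphi$ shows that each nonlinear term in $F^s(\varphi_1)-F^s(\varphi_2)$ carries either an extra factor of some $\varphi_i$ (absorbed by $\|\varphi_i\|_{E_1}$) or a positive power of $t$ (absorbed by $T^{1/2}$). Combined with maximal regularity, this makes $\Phi$ a contraction on a small ball of $E_1([0,T])$ for sufficiently small $T$, yielding the unique solution; the Lipschitz estimates \eqref{eq:lip-cont} and \eqref{eq:E1-bound} then follow by standard parameter-dependence of the contraction mapping.

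The quantitative bound $T=C(\omega,\chi)s^2$ arises from the $s$-dependence of the maximal regularity constant: the fourth-order coefficient of $\bL^s$ degenerates like $s$ as $s\to 0^+$, so that constant scales like $s^{-1}$, and compensating this by the $T^{1/2}$ factor in the nonlinear estimate forces $T\lesssim s^2$. The principal obstacle is precisely the core Lipschitz estimate: it requires careful interpolation of the various products $\partial^k\varphi_1\cdot\partial^l\varphi_2$ with $k,l\leq 4$ in the weighted little H\"older norms together with uniform control of $g_\varphi^{-1}$ and $\det g_\varphi$, both of which rely on the smallness hypothesis $\|x\|_{C^{2,\alpha}}\leq\epsilon$ to keep $\omega_\varphi$ uniformly positive along the flow.
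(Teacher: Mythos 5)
Your plan is structurally sound but takes a genuinely different route from the paper at the key linearization step. The paper does not linearize at the twisted Lichnerowicz operator $\bL^s$; instead it isolates the constant-coefficient principal part $s\Delta^2$ (with $\Delta$ the Laplacian of the \emph{fixed} metric $\omega$), writes the flow as $\partial_t\varphi + s\Delta^2\varphi = F(\varphi)$ with $F(\varphi)=s\Delta^2\varphi + R^s(\omega_\varphi)-\underline{R}^s$, and then performs the time rescaling $\tau = ts$ so that the linear operator becomes $\Delta^2$, the Cl\'ement--Simonett maximal-regularity constants become entirely $s$-independent, and all $s$-dependence is concentrated in the single factor $\tfrac{1}{s}F(\varphi)$ on the right-hand side. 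The constraint $T\lesssim s^2$ then drops out directly from requiring $T^{1/2}/s$ to be small in the nonlinear estimate. Your choice of $\bL^s$ as the reference operator will work in principle, but it forces you to establish maximal regularity for a \emph{variable-coefficient} family whose leading coefficient degenerates like $s$, and then to track the resulting $s$-dependence of the Cl\'ement--Simonett constants through the fixed-point argument. You assert this constant scales like $s^{-1}$; it is not obvious (naively the semigroup smoothing $c^{2,\alpha}\!\to\! c^{4,\alpha}$ costs $(st)^{-1/2}$, suggesting $s^{-1/2}$ instead), and since $\bL^s$ is not $s$ times a fixed operator, the clean time-rescaling trick that makes the paper's bookkeeping transparent is not available to you. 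In practice you would likely end up writing $\bL^s = s\Delta^2 + (\text{second order})$ and absorbing the second-order part into the nonlinearity, i.e.\ recovering the paper's decomposition. Two small slips worth flagging: the kernel of $\bL^s$ equals the constants only for $s\in(0,1)$, not at $s=1$ where it contains holomorphic potentials (though this is irrelevant for semigroup generation); and $DF^s(0)$ is actually first order, not merely ``at most two,'' since the fourth- and second-order parts of $DR^s(\omega)$ cancel exactly against $-\bL^s$ once you use $\bL^s = s\Delta^2 + \langle\ric^s, i\ddbar\cdot\rangle + \langle\partial R^s,\partial\cdot\rangle$.
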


Another basic question, raised by Chen in \cite{MR3858468}, is whether the twisted Calabi flow is stable near twisted cscK metrics. The following theorem gives an affirmative answer.

\begin{thm}\label{thm:stability}
	Assume $p\geq 6m+2, s \in (0,1)$ and $\omega$ is a twisted cscK metric. There exists $\delta = C(\omega, \chi)s^p(1-s) > 0$ such that if $\| \varphi_0 \|_{c^{2,\alpha}(M)} < \delta$, then the twisted Calabi flow starting from $\varphi_0$ exists for all time and converges smoothly  to $0$. Moreover, $\|\varphi(t)\|_{C^{k,\alpha}(M)}$ converges to 0 exponentially as $t\to\infty$ for any $k\geq 1$.
\end{thm}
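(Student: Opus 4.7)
The plan is to combine the short-time existence theorem \cref{t1.7} with an energy argument built around the twisted Calabi functional $\mC^s$, using the positivity of the Hessian established above. Following the architecture of Chen-He \cite{MR2405167} and He \cite{MR3010280}, the proof has three components: first, show that $\mC^s(\omega_{\varphi(t)})$ decays exponentially along the flow using a spectral gap for $\bL^s$; second, upgrade this $L^2$-type decay to $C^{k,\alpha}$-decay via parabolic smoothing in the spaces $E_0, E_1$ from \cref{t1.7}; third, iterate short-time existence to obtain global existence, and extract a smooth limit $\varphi_\infty$ which, by the isolation of twisted cscK metrics, must be constant.

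For the energy decay I would compute, using the first variation formula together with $\bL^s_\varphi(\underline{R}^s)=0$,
\[
\frac{d}{dt}\mC^s(\omega_\varphi) = -2\int_M (R^s - \underline{R}^s)\, \bL^s_\varphi(R^s - \underline{R}^s)\, \omega_\varphi^m.
\]
At the twisted cscK metric $\omega$ the Hessian formula shows that $\bL^s$ is strictly positive on functions with zero mean, so it has a positive spectral gap $\lambda^s>0$. Continuity of $\bL^s_\varphi$ in $\varphi$ in the $c^{2,\alpha}$ topology, together with the smallness $\|\varphi(t)\|_{c^{2,\alpha}}<\epsilon$, then transports this as a Poincar\'e-type inequality to $\omega_\varphi$, yielding
\[
\frac{d}{dt}\mC^s(\omega_\varphi) \leq -\lambda^s\,\mC^s(\omega_\varphi),
\]
and hence $\mC^s(\omega_{\varphi(t)}) \leq e^{-\lambda^s t}\mC^s(\omega_{\varphi_0})$ as long as the smallness of $\varphi$ is preserved.

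To close the bootstrap I would use the quasilinear parabolic structure: the linearization of the flow at $\omega$ is $\partial_t + \bL^s$, which generates an analytic semigroup on the little H\"older space $c^\alpha(M)$ modulo constants, and the smoothing estimate \eqref{eq:E1-bound} from \cref{t1.7} transfers to forward orbits near $\omega$. Interpolating the $L^2$-decay of $R^s - \underline{R}^s$ against Schauder-type estimates on $\varphi$ produces exponential $C^{k,\alpha}$-decay of $R^s(\omega_{\varphi(t)}) - \underline{R}^s$ for each $k$; since $\partial_t \varphi = R^s - \underline{R}^s$, integrating in time shows that $\varphi(t)$ is Cauchy in every $C^{k,\alpha}$ norm and therefore converges smoothly to some $\varphi_\infty$. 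The limit is a twisted cscK potential within the small neighborhood, so the isolation consequence of the Hessian positivity forces $\varphi_\infty$ to be constant. Global existence follows from a standard continuation argument: each application of \cref{t1.7} extends the flow by a fixed time $T = C(\omega,\chi)s^2$, and the exponential decay ensures the $c^{2,\alpha}$ bound $\|\varphi(t)\|_{c^{2,\alpha}}<\epsilon$ is preserved for all such extensions, provided $\delta$ is chosen sufficiently small relative to $\epsilon$.

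The main obstacle, and the source of the quantitative threshold $\delta = C(\omega,\chi)s^p(1-s)$ with $p\geq 6m+2$, is tracking the $s$-dependence through each step uniformly. The fourth-order leading part of $\bL^s$ carries the factor $s$, so the spectral gap $\lambda^s$ degenerates as $s\to 0$; the twisting term carries $(1-s)$, which governs how $\bL^s_\varphi - \bL^s$ scales under small perturbations of the metric and hence the allowable size of $\varphi$; and the interpolation converting the $L^2$ decay to Schauder estimates introduces further powers of $s$ through the parabolic smoothing and Sobolev constants — the exponent $6m+2$ appears to come from this dimension-dependent interpolation. Making the smallness $\delta$ sharp in $s$ and $1-s$, so that the continuation argument closes with all decay constants of the correct sign, is the technical heart of the proof.
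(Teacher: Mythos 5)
Your high-level architecture — exponential energy decay via a spectral gap for $\bL^s$, a continuation/bootstrap to get global existence, and identification of the limit — matches the paper's. But the technical heart of the argument is exactly the step you gloss over: converting exponential decay of the twisted Calabi energy (an $L^2$-type quantity in $R^s - \underline{R}^s$) into a $c^{3,\alpha}$ bound on $\varphi$ itself that stays small enough to reapply short-time existence indefinitely. Parabolic smoothing from \cref{t1.7} only controls $\|\varphi(t+I)\|_{c^{4,\alpha}}$ in terms of $\|\varphi(t)\|_{c^{2,\alpha}}$; it does not, on its own, bound the $c^{2,\alpha}$ norm by the Calabi energy, so it cannot propagate smallness across arbitrarily long times. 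The paper supplies the missing ingredient as a genuine elliptic regularity estimate (\cref{lem:elliptic-regularity}): rewrite the twisted scalar curvature equation as an elliptic problem $-\Delta_\varphi u = h_s$ for $u = \log(\det g_\varphi/\det g)$, apply $W^{2,p}$ estimates, Schauder estimates for the Monge--Amp\`ere relation, the Sobolev embedding $W^{2,p}\hookrightarrow C^{1,\alpha}$ (forcing $p>2m$), and a Gagliardo--Nirenberg interpolation to obtain
\[
\|\varphi\|_{C^{3,\alpha}} \le C\left(\tfrac{1-s}{s}\|R^s-\underline{R}^s\|_{L^p} + \tfrac{1}{s^{3p}}\|\varphi\|_{L^1}\right).
\]
The $L^1$ bound on $\varphi$ is itself a nontrivial input, obtained by integrating $\partial_t\varphi = R^s-\underline{R}^s$ in time against the energy decay; you do not mention it. Your proposed interpolation "of $L^2$-decay against Schauder-type estimates" is not specified enough to substitute for this, and the quantitative exponent $p\ge 6m+2$ in $\delta$ comes precisely from $2 + 3p'$ with the Sobolev exponent $p' > 2m$ in the elliptic estimate, not from semigroup constants.

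Your alternative idea of closing via Duhamel for $\partial_t + \bL^s$ around the cscK metric is a legitimate (and well-known) strategy for nonlinear stability, but you would still need to verify maximal-regularity and quadratic nonlinearity estimates in the $s$-weighted little H\"older scale — none of which you carry out — and it is a genuinely different route from the paper's. A minor point: the paper identifies $\varphi(\infty)=0$ by the Berman--Berndtsson uniqueness theorem rather than isolation; either suffices once you have a limit in a small $c^{2,\alpha}$ ball.
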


\begin{rem}
	The stability theorem for the classical Calabi flow \cite{MR2405167} states that if the initial metric is sufficiently close to a cscK metric $\omega_0$, the flow converges to some cscK metric, which may differ from $\omega_0$ by an automorphism of $(M, J)$. In contrast, by the uniqueness result of Berman-Berndtsson \cite{MR3671939}, the limiting twisted cscK metric in \cref{thm:stability} must be the original metric $\omega$.
\end{rem}

\section{ Preliminaries}
\label{sec:preliminaries}

\subsection{Notations and basic conventions}

Throughout this paper, $(M, \omega, J)$ denotes a compact K\"ahler manifold of complex dimension $m$ with K\"ahler form $\omega$. It's real dimension is $n=2m$. For any smooth function $f \in C^1(M)$, we define the $(1,0)$ and $(0,1)$ components of its gradient vector field as follows:
\begin{align*}
    \nabla^{1,0} f &= g^{\alpha\bar{\beta}} \frac{\partial f}{\partial \bar{z}_\beta} \frac{\partial}{\partial z_\alpha}, \\
    \nabla^{0,1} f &= g^{\alpha\bar{\beta}} \frac{\partial f}{\partial z_\alpha} \frac{\partial}{\partial \bar{z}_\beta}.
\end{align*}
The Riemannian metric $g$ extends $\mathbb{C}$-bilinearly to the complexified tangent bundle $T^\mathbb{C} M$. For complex-valued differential forms $\psi \in \Omega^{p,q}(M)$ and $\phi \in \Omega^{p,q}(M)$, we employ the Hermitian inner product:
\[
\langle \psi, \phi \rangle = g(\psi, \overline{\phi}).
\]
Let $\bar{\partial}^*$ denote the formal adjoint of $\bar{\partial}$ with respect to this inner product, characterized by the relation:
\[
\int_M \langle \bar{\partial}^* \psi, \phi \rangle \,\omega^m = \int_M \langle \psi, \bar{\partial} \phi \rangle \, \omega^m, \quad \forall \psi \in \Omega^{p,q+1}(M), \phi \in \Omega^{p,q}(M).
\]
The operator $\partial^*$ is defined analogously as the formal adjoint of $\partial$.

 We first introduce some basic formulas.

\begin{lem}\label{lem:tensoridentity}
    For any closed real $(1,1)$-form $\chi = i\chi_{\alpha\bar{\beta}} dz_\alpha \wedge d\bar{z}_\beta$ and smooth function $\varphi \in C^\infty(M)$, the following identity holds:
    \begin{align} \label{equ:deltagrad}
        i\bar{\partial}^* (\nabla^{1,0} \varphi \lrcorner \chi) = \langle i\ddbar \varphi, \chi \rangle + \langle \partial \tr_\omega \chi,  \partial  \varphi \rangle.
    \end{align}
\end{lem}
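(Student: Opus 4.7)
\textbf{Proof plan for \cref{lem:tensoridentity}.} The identity is a pointwise computation, so the plan is to fix an arbitrary point $p \in M$, choose normal holomorphic (Kähler) coordinates there so that $g_{\alpha\bar\beta} = \delta_{\alpha\beta}$ and $\partial_\gamma g_{\alpha\bar\beta} = 0$ at $p$, and then just expand both sides. Write $\chi = i\chi_{\alpha\bar\beta}\,dz^\alpha\wedge d\bar z^\beta$ and unpack the contraction
\[
\nabla^{1,0}\varphi \lrcorner\chi \;=\; i\,g^{\gamma\bar\delta}\varphi_{\bar\delta}\,\chi_{\gamma\bar\beta}\,d\bar z^\beta,
\]
so this is a $(0,1)$-form with coefficient $\xi_{\bar\beta} := i\,g^{\gamma\bar\delta}\varphi_{\bar\delta}\chi_{\gamma\bar\beta}$.

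Next I would apply the standard Kähler formula
\[
\bar\partial^*(\xi_{\bar\beta}\,d\bar z^\beta) \;=\; -\,g^{\alpha\bar\beta}\nabla_\alpha\xi_{\bar\beta}
\]
and expand via the Leibniz rule, splitting the result into the two natural pieces:
\begin{align*}
\bar\partial^*(\nabla^{1,0}\varphi\lrcorner\chi) &= -i\,g^{\alpha\bar\beta}g^{\gamma\bar\delta}\,\chi_{\gamma\bar\beta}\,\partial_\alpha\partial_{\bar\delta}\varphi \;-\; i\,g^{\alpha\bar\beta}g^{\gamma\bar\delta}\,\nabla_\alpha\chi_{\gamma\bar\beta}\,\varphi_{\bar\delta}.
\end{align*}
Multiplying through by $i$, the first piece is exactly $\langle i\ddbar\varphi,\chi\rangle$ because the pointwise inner product on real $(1,1)$-forms is $\langle i A_{\alpha\bar\beta}dz^\alpha\wedge d\bar z^\beta,\, i B_{\gamma\bar\delta}dz^\gamma\wedge d\bar z^\delta\rangle = g^{\alpha\bar\beta}g^{\gamma\bar\delta}A_{\alpha\bar\delta}B_{\gamma\bar\beta}$.

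For the second piece, the crucial input is the closedness of $\chi$: since $d\chi = 0$ forces $\partial\chi = 0$, we have the symmetry $\nabla_\alpha\chi_{\gamma\bar\beta} = \nabla_\gamma\chi_{\alpha\bar\beta}$ in the two holomorphic indices. Swapping the order of contraction and covariant derivative (using metric compatibility) then gives
\[
g^{\alpha\bar\beta}\nabla_\alpha\chi_{\gamma\bar\beta} \;=\; \nabla_\gamma\bigl(g^{\alpha\bar\beta}\chi_{\alpha\bar\beta}\bigr) \;=\; \partial_\gamma\bigl(\tr_\omega\chi\bigr),
\]
so the remaining term becomes $g^{\gamma\bar\delta}\partial_\gamma(\tr_\omega\chi)\,\varphi_{\bar\delta} = \langle\partial\tr_\omega\chi,\partial\varphi\rangle$, as required.

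The computation is entirely local and elementary; the only conceptual step is the use of $d\chi = 0$ to trade a derivative of $\chi$ for a derivative of its trace. The main risk is bookkeeping—getting the sign from the factor $i$ and the conjugation conventions of the Hermitian inner product correct, and distinguishing between $\nabla^{1,0}$ (which uses $\partial_{\bar\beta}\varphi$) and $\partial\varphi$ (which uses $\partial_\alpha\varphi$). I expect no genuine obstacle beyond these conventions.
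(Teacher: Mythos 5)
Your proposal is correct and follows essentially the same route as the paper's proof: expand $\nabla^{1,0}\varphi\lrcorner\chi$ in coordinates, apply $\bar\partial^*$ via the Leibniz rule to split into the Hessian term and the $\nabla\chi$ term, identify the first as $\langle i\ddbar\varphi,\chi\rangle$, and use the closedness symmetry $\nabla_\alpha\chi_{\gamma\bar\beta}=\nabla_\gamma\chi_{\alpha\bar\beta}$ together with metric compatibility to turn the second into $\langle\partial\tr_\omega\chi,\partial\varphi\rangle$. The paper skips the explicit normal-coordinate setup but the computation is otherwise the same.
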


\begin{proof}
    The closedness of $\chi$ implies the commutation relation $\nabla_\alpha \chi_{\xi\bar{\beta}} = \nabla_\xi \chi_{\alpha\bar{\beta}}$. We compute directly:
    \begin{align*}
        i\bar{\partial}^* (\nabla^{1,0} \varphi \lrcorner \chi)
        &= g^{\alpha\bar{\beta}} \nabla_\alpha \left( g^{\gamma\bar{\eta}} \varphi_{\bar{\eta}} \chi_{\gamma\bar{\beta}} \right) \\
        &= g^{\alpha\bar{\beta}} g^{\gamma\bar{\eta}} \varphi_{\alpha\bar{\eta}} \chi_{\gamma\bar{\beta}} + g^{\alpha\bar{\beta}} g^{\gamma\bar{\eta}} \varphi_{\bar{\eta}} \nabla_\alpha \chi_{\gamma\bar{\beta}} \\
        &= \langle i\ddbar \varphi, \chi \rangle + g^{\alpha\bar{\beta}} g^{\gamma\bar{\eta}} \varphi_{\bar{\eta}} \nabla_\gamma \chi_{\alpha\bar{\beta}} \\
        &= \langle i\ddbar \varphi, \chi \rangle + \langle \partial \tr_\omega \chi,  \partial \varphi \rangle.
    \end{align*}
    Taking the complex conjugate yields the  identity:
    \[
    i\partial^* (\nabla^{0,1} \varphi \lrcorner \chi) = -\langle i\ddbar \varphi, \chi \rangle - \langle \bar{\partial} \tr_\omega \chi, \bar\partial \varphi \rangle.
    \]
\end{proof}

\subsection{Twisted canonical metrics}

We now introduce the twisted Ricci curvature and twisted scalar curvature.

\begin{defn}[Twisted Ricci curvature]
    For a K\"ahler form $\chi$ and parameter $s \in [0,1]$, the \emph{twisted Ricci curvature} is defined by:
    \[
    \ric^s = s \ric - (1-s) \chi,
    \]
    and the corresponding \emph{twisted scalar curvature} is given by:
    \[
    R^s = -(1-s) \tr_\omega \chi + s R(\omega).
    \]
\end{defn}

Unless otherwise specified, we maintain the assumption $s \in (0,1)$ throughout this work.

\begin{defn}[Twisted cscK metric]
    A K\"ahler metric $\omega$ is called a \emph{twisted constant scalar curvature K\"ahler (cscK) metric} if there exists a constant $C^s$ such that:
    \begin{align}\label{twisted cscK equaiton}
    s R(\omega) - (1-s) \tr_\omega \chi \equiv C^s.
    \end{align}
    Integration over $M$ determines the constant explicitly:
    \[
    C^s = s \underline{R} - (1-s) \underline{\chi},
    \]
    where the averages are given by:
    \[
    \underline{\chi} = \frac{m\int_M\chi\wedge\omega^{m-1}}{\int_M\omega^m}, \quad \underline{R} = \frac{\int_MR(\omega)\omega^m}{\int_M\omega^m}.
    \]
\end{defn}

\subsection{Twisted functionals and flows}

Define the space of K\"ahler potentials:
\[
\mathcal{H} = \{ \varphi \in C^\infty (M,\mathbb{R}) : \omega_\varphi := \omega + i\ddbar \varphi > 0 \}.
\]
Formally, the tangent space at $\varphi \in \mathcal{H}$ is $T_\varphi \mathcal{H} = C^\infty (M,\mathbb{R})$. The space $\mathcal{H}$ carries a natural Riemannian structure known as the \emph{Mabuchi metric}:
\[
\llangle f, g \rrangle_{\varphi} = \int_M f g \, \omega_\varphi^m, \quad \forall f, g \in C^\infty (M,\mathbb{R})= T_\varphi \mathcal{H}.
\]
The central energy functionals in our theory are defined as follows:
\begin{defn}[Twisted $K$-energy]
    For $s \in [0,1]$, the \emph{twisted $K$-energy functional} $\mM^s : \mathcal{H} \to \mathbb{R}$ is defined by:
    \[
    \mM^s(\varphi) = s \mM(\varphi) + (1-s) J_\chi(\varphi),
    \]
    where:
    \begin{itemize}
        \item $\mM : \mathcal{H} \to \mathbb{R}$ is the \emph{Mabuchi $K$-energy functional} \cite{MR0796483}, characterized by:
        \[
        D(\mM)_{\varphi}(\phi) = -\int_M \phi (R_\varphi - \underline{R}) \, \omega_\varphi^m;
        \]
        \item $J_\chi : \mathcal{H} \to \mathbb{R}$ is the \emph{$J$-functional} \cite{MR1772078}, characterized by:
        \[
        D(J_\chi)_{\varphi}(\phi) = \int_M \phi (\tr_{\omega_\varphi} \chi - \underline{\chi}) \, \omega_\varphi^m.\]
    \end{itemize}
\end{defn}
The twisted cscK equation \eqref{twisted cscK equaiton} is the Euler-Lagrange equation for the twisted $K$-energy functional $\mM^s$.

\begin{defn}[Twisted Calabi flow]
    The \emph{twisted Calabi flow} is the gradient flow of the twisted $K$-energy functional:
    \begin{align} \label{tcflow1}
        \frac{\partial \varphi}{\partial t} = s R_\varphi - (1-s) \tr_{\varphi} \chi - s \underline{R} + (1-s) \underline{\chi},
    \end{align}
    where $R_\varphi$ denotes the scalar curvature of $\omega + i\ddbar \varphi$, and $\tr_\varphi \chi = \tr_{\omega_\varphi} \chi$.
\end{defn}
When $s=0$, then twisted Calabi flow reduces to the $J$-flow introduced by Donaldson in \cite{donaldson1999moment},
\begin{align}
 \frac{\partial \varphi}{\partial t} =  -   \tr_{\varphi} \chi  +   \underline{\chi}.
\end{align}
We refer to \cite{MR2368374,MR3318155,MR3698234} for more details of $J$-flow. When $s=1$, then twisted Calabi flow reduces to usual Calabi flow
\begin{align}
 \frac{\partial \varphi}{\partial t} =   R_\varphi  -\underline{R}
\end{align}
We refer to \cite{MR2405167,MR4259154,MR1820328,MR3969453,MR3293741} for more studies. The twisted Calabi flow can be viewed as a continuous path connecting $J$-flow and Calabi flow.

Introducing the notation $\tR = s R_\varphi-(1-s) \tr_{\varphi} \chi $ and $\lR^s = -(1-s) \lchi + s \lR$, the twisted Calabi flow equation simplifies to:
\[
\frac{\partial \varphi}{\partial t} = R^s - \btR.
\]

A related but distinct flow is the twisted K\"ahler-Ricci flow:
\begin{defn}[Twisted K\"ahler-Ricci flow]
    The \emph{{twisted K\"ahler-Ricci flow}} is defined by the evolution equation:
    \[
    \frac{\partial g(t)}{\partial t} = \ric^s(\omega(t)).
    \]
\end{defn}
We refer to \cite{MR3518375} for more studies of this flow.

\subsection{The twisted Lichnerowicz operator}

The Lichnerowicz operator plays a fundamental role in the study of canonical metrics in K\"ahler geometry. We recall its classical definition before introducing the twisted generalization.

\begin{defn}[Classical Lichnerowicz operator]\label{Classical Lichnerowicz operator}
    The \emph{Lichnerowicz operator} is the fourth-order, self-adjoint, semi-positive differential operator defined by:
    \[
    \bL(f) = \mathfrak{D}^* \mathfrak{D} f = \Delta^2 f + \mathrm{Ric}^{\alpha\bar{\beta}} f_{\alpha\bar{\beta}} + g^{\alpha\bar{\beta}}   R_\alpha  f_{\bar{\beta}}, \quad \forall f \in C^\infty(M,\mathbb{C}),
    \]
    where $\mathfrak{D} f = \bar{\partial} \nabla^{1,0} f$.
\end{defn}

Originally introduced by Lichnerowicz to characterize holomorphic vector fields on K\"ahler manifolds, this operator admits extensions to symplectic settings \cite{he2023hermitian}. We refer to \cite[Section 1.23]{gauduchon2010calabi} for a comprehensive treatment.

We now introduce the central differential operator in our theory.

\begin{defn}[Twisted Lichnerowicz operator] \label{defoflich}
    For the fixed K\"ahler form $\chi$ and parameter $s \in [0,1]$, the \emph{twisted Lichnerowicz operator} $\bL^s$ is defined by:
    \[
    \bL^s(f) = s \bL(f) - (1-s) i \bar{\partial}^* (\nabla^{1,0} f \lrcorner \chi), \quad \forall f \in C^\infty(M,\mathbb{C}).
    \]
\end{defn}
Twisted Lichnerowicz operator is also a continuity path connecting the classical Lichnerowicz operator and the gradient of $J$-functional.
Using identity \eqref{equ:deltagrad}, we obtain an equivalent expression:
\begin{align}
		\label{lichexpression2}
	\begin{split}
	\bL^s(f) = &s  \bL(f) - (1-s) \langle i\ddbar f, \chi \rangle - (1-s) \langle \partial f, \bar{\partial}^* \chi \rangle
	\\
	=&s\Delta^2f+\langle \mathrm{Ric}^s, i\ddbar f\rangle+\langle \partial R^s, \partial f\rangle.
	\end{split}
\end{align}
Since $\chi$ is a positive $(1,1)$-form, it induces a Riemannian metric on the tangent bundle $TM$ via:
\[
\langle X, Y \rangle_\chi = \chi(X, JY), \quad \forall X, Y \in T_p M.
\]
This metric extends $\mathbb{C}$-linearly to $T^\mathbb{C} M$. In local coordinates, if $\chi = i\chi_{\alpha\bar{\beta}} dz_\alpha \wedge d\bar{z}_\beta$, then the induced Hermitian inner product is:
\[
\left\langle \frac{\partial}{\partial z_\alpha}, \frac{\partial}{\partial z_\beta} \right\rangle_\chi = \chi_{\alpha\bar{\beta}}.
\]
The associated global inner product is defined by:
\[
\llangle X, Y \rrangle_\chi = \int_M \langle X, Y \rangle_\chi \, \omega^m, \quad \forall X, Y \in \Gamma(T^\mathbb{C} M).
\]

The fundamental properties of the twisted Lichnerowicz operator are summarized in the following result.

\begin{lem} \label{lem:twistedlich}
    For any $\varphi, \phi \in C^\infty(M,\mathbb{R})$, we have:
    \[
    \llangle \bL^s(\varphi), \phi \rrangle = s \llangle \mathfrak{D} \varphi,  \mathfrak{D}  \phi \rrangle + (1-s) \llangle \nabla^{1,0} \varphi, \nabla^{1,0} \phi \rrangle_{\chi}.
    \]
    Consequently:
    \begin{enumerate}
        \item $\bL^s$ is a self-adjoint non-negative operator;
        \item If $s\in[0,1)$, then $\ker \bL^s = \{\text{constants}\}$;
        \item When restricted to $C^\infty_0(M,\mathbb{R})=\{f\in C^\infty(M,\bR), \int_Mf\omega^m=0\}$, we have $\ker \bL^s|_{C^\infty_0(M,\mathbb{R})} = \{0\}$.
    \end{enumerate}
\end{lem}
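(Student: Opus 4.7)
The identity
\[
\llangle \bL^s(\varphi), \phi \rrangle = s \llangle \mathfrak{D} \varphi, \mathfrak{D} \phi \rrangle + (1-s) \llangle \nabla^{1,0} \varphi, \nabla^{1,0} \phi \rrangle_\chi
\]
is the heart of the lemma: once it is established, all three consequences fall out by standard arguments. My plan is to split $\bL^s$ according to its definition into the piece $s\bL = s\mathfrak{D}^*\mathfrak{D}$ and the piece $-(1-s)\,i\bar\partial^*(\nabla^{1,0}\cdot \lrcorner \chi)$, and handle each separately.

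For the first piece, the decomposition $\bL = \mathfrak{D}^*\mathfrak{D}$ together with the defining property of the formal adjoint immediately yields $\llangle \bL\varphi, \phi \rrangle = \llangle \mathfrak{D}\varphi, \mathfrak{D}\phi \rrangle$. For the second piece, I would apply the adjoint relation for $\bar\partial$ to transfer the derivative onto $\phi$,
\[
\int_M i\bar\partial^*(\nabla^{1,0}\varphi \lrcorner \chi)\cdot\phi\,\omega^m = i \int_M \langle \nabla^{1,0}\varphi \lrcorner \chi,\, \bar\partial\phi \rangle\,\omega^m,
\]
and then identify the right-hand side with $-\llangle \nabla^{1,0}\varphi, \nabla^{1,0}\phi \rrangle_\chi$ by a pointwise calculation in holomorphic coordinates, where one tracks the factors of $i$ coming from $\chi = i\chi_{\alpha\bar\beta}\, dz^\alpha \wedge d\bar z^\beta$ and from the conjugation built into the Hermitian inner product. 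A cleaner alternative is to invoke \cref{lem:tensoridentity} to rewrite $i\bar\partial^*(\nabla^{1,0}\varphi \lrcorner \chi)$ as $\langle i\ddbar\varphi, \chi\rangle + \langle \partial \tr_\omega \chi, \partial\varphi \rangle$, and then integrate by parts once more to regroup the result as a $\chi$-pairing of $(1,0)$-gradients; either route leads to the same identity.

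With the bilinear identity in hand, the three consequences are immediate. Symmetry of the right-hand side in $(\varphi, \phi)$ gives self-adjointness of $\bL^s$, and non-negativity of the quadratic form $s\|\mathfrak{D}\varphi\|^2 + (1-s)\|\nabla^{1,0}\varphi\|_\chi^2$ gives $\bL^s \geq 0$; this establishes (1). For (2), if $s \in [0,1)$ and $\bL^s\varphi = 0$, then pairing against $\varphi$ forces $(1-s)\|\nabla^{1,0}\varphi\|_\chi^2 = 0$; the positivity of $\chi$ yields $\bar\partial\varphi = 0$, and since $\varphi$ is real-valued on the compact connected K\"ahler manifold $M$ this forces $d\varphi = 0$, so $\varphi$ is constant. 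The reverse containment $\{\text{constants}\} \subseteq \ker \bL^s$ is clear from $\mathfrak{D}(\mathrm{const}) = 0$ and $\nabla^{1,0}(\mathrm{const}) = 0$. Finally, (3) follows from (2) since the only constant with vanishing average is $0$.

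The main technical nuisance is the sign- and $i$-bookkeeping in the pointwise computation of the second piece, where one must correctly combine the $i$ from the definition of $\bL^s$, the $i$ built into the real $(1,1)$-form $\chi$, and the complex conjugation in $\langle \cdot, \cdot\rangle$. Beyond this bookkeeping the argument is entirely mechanical, and the appeal to \cref{lem:tensoridentity} sketched above is the safest way to sidestep any sign confusion.
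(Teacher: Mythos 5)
Your proposal is correct and follows essentially the same route as the paper's proof: split $\bL^s$ into $s\mathfrak{D}^*\mathfrak{D}$ plus the twisted term, move $\bar\partial^*$ onto $\phi$ via the adjoint relation, and identify the resulting pairing with $-\llangle \nabla^{1,0}\varphi,\nabla^{1,0}\phi\rrangle_\chi$ by a pointwise coordinate computation, exactly as the paper does before reading off self-adjointness, non-negativity, and the kernel. The consequences (1)--(3) are handled identically; the paper does not use the \cref{lem:tensoridentity} detour you mention as an alternative.
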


\begin{proof}
    The key computation involves the twisted term:
    \begin{align*}
        \int_M \langle i \bar{\partial}^* (\nabla^{1,0} \varphi \lrcorner \chi), \phi \rangle \, \omega^m
        &= \int_M i \langle \nabla^{1,0} \varphi \lrcorner \chi, \bar\partial \phi \rangle \, \omega^m   = \int_M -\varphi^{\alpha}   \chi_{\alpha\bar\beta}\phi^{\bar\beta}\, \omega^m  \\
        &= - \llangle \nabla^{1,0} \varphi, \nabla^{1,0} \phi \rrangle_\chi.
    \end{align*}
    Therefore,
    \begin{align*}
        \llangle \bL^s(\varphi), \phi \rrangle
        &= \llangle s \bL(f) - (1-s) i \bar{\partial}^* (\nabla^{1,0} \varphi \lrcorner \chi), \phi \rrangle \\
        &= s \llangle \mathfrak{D} \varphi,  \mathfrak{D}  \phi \rrangle + (1-s) \llangle \nabla^{1,0} \varphi, \nabla^{1,0} \phi \rrangle_\chi.
    \end{align*}
    The semi-positivity follows immediately, and $\llangle \bL^s(f), f \rrangle = 0$ if and only if $\nabla^{1,0} f = 0$, i.e., $f$ is constant.
\end{proof}

We conclude with an eigenvalue estimate that will be crucial for our stability analysis.

\begin{lem} \label{lem:1.6}
   Assume $s\in[0,1)$. Let $\lambda_1$ denote the first eigenvalue of $\bL^s$ and $\mu_1$ the first positive eigenvalue of the Laplacian $\Delta_\omega$. If
    \[
    \chi(X, JX) \geq \kappa \omega(X, JX), \quad \forall X \in \Gamma(TM),
    \]
    for some $\kappa > 0$, then:
    \begin{align} \label{1steigen}
        \lambda_1 \geq \kappa (1-s) \mu_1.
    \end{align}
\end{lem}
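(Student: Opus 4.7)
The plan is to deduce the bound directly from the variational characterization of $\lambda_1$ combined with the integral identity in \cref{lem:twistedlich}. Since $\bL^s$ is self-adjoint and non-negative with kernel equal to the constants (for $s \in [0,1)$), we have
\[
\lambda_1 = \inf_{\substack{f \in C^\infty_0(M,\mathbb{R}) \\ f \not\equiv 0}} \frac{\llangle \bL^s(f), f \rrangle}{\|f\|_{L^2}^2},
\]
so it suffices to bound $\llangle \bL^s(f), f \rrangle$ from below by $\kappa(1-s)\mu_1 \|f\|_{L^2}^2$ on mean-zero functions.

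First I would apply the identity
\[
\llangle \bL^s(f), f \rrangle = s \llangle \mathfrak{D} f, \mathfrak{D} f \rrangle + (1-s) \llangle \nabla^{1,0} f, \nabla^{1,0} f \rrangle_{\chi}
\]
from \cref{lem:twistedlich} and simply discard the first term, which is non-negative. This reduces the problem to showing
\[
\llangle \nabla^{1,0} f, \nabla^{1,0} f \rrangle_{\chi} \geq \kappa \mu_1 \|f\|_{L^2}^2.
\]

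Next I would translate the pointwise hypothesis $\chi(X, JX) \geq \kappa \omega(X, JX)$ on real vectors into its Hermitian counterpart on $(1,0)$-vectors, namely $\chi_{\alpha\bar\beta} Y^\alpha \overline{Y^\beta} \geq \kappa g_{\alpha\bar\beta} Y^\alpha \overline{Y^\beta}$; this is a routine consequence of the $\mathbb{C}$-bilinear extension and the fact that every real vector on $M$ decomposes as $Y + \overline{Y}$ with $Y \in T^{1,0}M$. Applying this pointwise to $\nabla^{1,0} f$ and integrating yields
\[
\llangle \nabla^{1,0} f, \nabla^{1,0} f \rrangle_{\chi} \geq \kappa \int_M g^{\alpha\bar\beta} f_\alpha f_{\bar\beta} \, \omega^m.
\]

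Finally I would invoke the Poincar\'e inequality for the K\"ahler Laplacian on mean-zero functions, which in the paper's convention reads $\int_M g^{\alpha\bar\beta} f_\alpha f_{\bar\beta}\, \omega^m \geq \mu_1 \int_M f^2 \omega^m$ for $f \in C^\infty_0(M,\mathbb{R})$; this follows by integration by parts together with the variational characterization of the first positive eigenvalue $\mu_1$ of $\Delta_\omega$. Chaining the three inequalities produces the desired lower bound on the Rayleigh quotient, and thus \eqref{1steigen}.

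The only real subtlety I anticipate is bookkeeping of normalization constants—matching the paper's convention for $\Delta_\omega$ and the definition of $\langle \nabla^{1,0}f, \nabla^{1,0}f\rangle_\omega$ against the factor of $2$ relating $|\partial f|^2_\omega$ to the real gradient norm $|\nabla f|^2_\omega$—so that the clean constant $\kappa(1-s)\mu_1$ (and not $\tfrac{1}{2}\kappa(1-s)\mu_1$) appears in the final estimate.
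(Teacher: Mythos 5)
Your proposal is correct and follows essentially the same route as the paper: apply the identity from \cref{lem:twistedlich} to the Rayleigh quotient, discard the non-negative $s\llangle\mathfrak{D}f,\mathfrak{D}f\rrangle$ term, use the pointwise comparison $\chi\geq\kappa\omega$ on $(1,0)$-vectors, and finish with the variational characterization of $\mu_1$. The only difference is cosmetic: you spell out the passage from the real hypothesis $\chi(X,JX)\geq\kappa\omega(X,JX)$ to its Hermitian form and flag the normalization factor of $2$, whereas the paper states the resulting inequality $\|\nabla^{1,0}f\|^2_{L^2(\chi)}\geq\kappa\|\nabla^{1,0}f\|^2_{L^2}$ directly; with the paper's convention that $\Delta_\omega$ is the complex Laplacian and $\mu_1=\inf\|\nabla^{1,0}f\|^2_{L^2}/\|f-c_\omega(f)\|^2_{L^2}$, the clean constant $\kappa(1-s)\mu_1$ indeed comes out with no stray factor of $2$.
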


\begin{proof}
    The lower bound on $\chi$ implies:
    \[
    \| \mathrm{grad} f \|_{L^2(\chi)}^2 \geq \kappa \| \mathrm{grad} f \|_{L^2}^2.
    \]
    For any $f\in C^\infty (M,\bR)$, denote $c_\omega(f)=\int_Mf\omega^m/\int_M\omega^m$. Therefore,
    \begin{align*}
        \lambda_1 &= \inf_{\substack{f \in C^\infty(M,\mathbb{R}),\\ f\neq const}} \frac{\int_M \langle \bL^s(f-c_\omega(f)),f-c_\omega(f) \rangle \, \omega^m }{\int_M (f-c_\omega(f))^2 \, \omega^m } \\
        &= \inf_{\substack{f \in C^\infty(M,\mathbb{R}),\\ f\neq const}} \frac{ s \| \bar{\partial} \nabla^{1,0} f \|_{L^2}^2 + (1-s) \| \nabla^{1,0} f \|_{L^2(\chi)}^2 }{\| f-c_\omega(f) \|_{L^2}^2} \\
        &\geq \inf_{\substack{f \in C^\infty(M,\mathbb{R}),\\ f\neq const}}  \frac{ \kappa (1-s) \| \nabla^{1,0} f \|_{L^2}^2 }{\| f -c_\omega(f)\|_{L^2}^2} = \kappa (1-s) \mu_1.
    \end{align*}
\end{proof}

\section{Variational theory of the twisted Calabi functional}
\label{sec:variational-theory}
In his section, we compute the first and second variations of twisted Calabi functional, establish key convexity properties, and derive important geometric consequences for the twisted Calabi flow.

\subsection{The twisted Calabi functional and its first variation}
We consider the space of K\"ahler metrics within a fixed K\"ahler class.
The space can be identified with the space of K\"ahler potentials $\mathcal{H}/\{constants\}$, allowing us to view the twisted Calabi functional as:
\[
\mC^s(\varphi) = \frac{1}{2} \int_M (\tR(\omega_\varphi)-\underline{R}^s)^2 \, \omega_\varphi^m, \quad \varphi \in \mathcal{H}.
\]

Our first main result characterizes the critical points of this functional.

\begin{thm}[First variation] \label{thm:first-variation}
    Assume $s\in(0,1)$. The first variation of the twisted Calabi functional is given by:
    \[
    D(\mC^s)_\omega(\varphi) = -  \llangle\varphi, \bL^s(\tR)\rrangle, \quad \forall \varphi \in C^\infty(M,\mathbb{R}).
    \]
    Consequently, the metric $\omega$ is a critical point of $\mC^s$ if and only if it is a twisted cscK metric.
\end{thm}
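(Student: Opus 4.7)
The plan is to differentiate $\mC^s$ directly along the deformation $\omega_t = \omega + t\, i\ddbar\varphi$, apply the standard variational formulas for scalar curvature, trace, and volume form, and then recognize the result as $-\llangle\varphi,\bL^s(\tR)\rrangle$ using the explicit expression \eqref{lichexpression2} for $\bL^s$.

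First I would observe that the constant $\btR = s\lR - (1-s)\lchi$ is topological in a fixed K\"ahler class — $\lR$ is determined by $c_1(M)\cdot[\omega]^{m-1}$ and $\lchi$ by $[\chi]\cdot[\omega]^{m-1}$ — so it is invariant under the deformation. Using the standard formulas
\[
\dot R = -\Delta^2\varphi - \langle\ric,i\ddbar\varphi\rangle,\qquad \dot{(\tr_\omega\chi)} = -\langle\chi,i\ddbar\varphi\rangle,\qquad \dot{\omega^m} = \Delta\varphi\,\omega^m,
\]
and combining the first two with coefficients $s$ and $-(1-s)$, one obtains $\dot{\tR} = -s\Delta^2\varphi - \langle\ric^s,i\ddbar\varphi\rangle$, hence
\[
D(\mC^s)_\omega(\varphi) = -\int_M(\tR-\btR)\bigl(s\Delta^2\varphi + \langle\ric^s,i\ddbar\varphi\rangle\bigr)\omega^m + \frac{1}{2}\int_M(\tR-\btR)^2\,\Delta\varphi\,\omega^m.
\]

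Next I would invoke \eqref{lichexpression2}, namely $\bL^s(\varphi) = s\Delta^2\varphi + \langle\ric^s,i\ddbar\varphi\rangle + \langle\partial \tR,\partial\varphi\rangle$, together with the self-adjointness of $\bL^s$ from \cref{lem:twistedlich}, to rewrite
\[
\llangle\varphi,\bL^s(\tR)\rrangle = \int_M(\tR-\btR)\,\bL^s(\varphi)\,\omega^m,
\]
where the constant $\btR$ may be subtracted because $\int_M\bL^s(\varphi)\,\omega^m = 0$ (the constants lie in $\ker\bL^s$). Matching this against the expression just derived reduces the theorem to the identity
\[
\int_M (\tR-\btR)\langle\partial \tR,\partial\varphi\rangle\,\omega^m = -\frac{1}{2}\int_M(\tR-\btR)^2\,\Delta\varphi\,\omega^m,
\]
which follows by setting $u := \tR - \btR$, rewriting the left-hand side as $\frac{1}{2}\int_M\langle\partial u^2,\partial\varphi\rangle\,\omega^m$, and integrating by parts in K\"ahler coordinates. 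The critical-point statement is then immediate: $D(\mC^s)_\omega \equiv 0$ forces $\bL^s(\tR) = 0$, and \cref{lem:twistedlich}(2) gives $\tR \equiv \btR$, i.e.\ $\omega$ is twisted cscK.

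The step requiring the most care is the bookkeeping that identifies the first-order term $\langle\partial \tR,\partial\varphi\rangle$ inside $\bL^s$ with the volume-form contribution $\frac{1}{2}(\tR-\btR)^2\Delta\varphi$: each individual variational formula is standard, but their signs and the coefficients $s$ and $1-s$ must combine precisely into $\ric^s$ and $\tR$, leaving no residual $\chi$-only or $\ric$-only pieces. The topological invariance of $\btR$ is what makes the constant drop out cleanly, and without it the identification with the twisted Lichnerowicz operator would fail.
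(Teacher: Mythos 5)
Your proof is correct and follows essentially the same route as the paper's: both differentiate along $\omega_t = \omega + t\,i\ddbar\varphi$ using the standard variational formulas, combine the bracket into $-\bL^s(\varphi)$, and absorb the $\frac{1}{2}(\tR-\btR)^2\Delta\varphi$ contribution from the volume form via integration by parts into the first-order term $\langle\partial R^s,\partial\varphi\rangle$. The only difference is cosmetic: you invoke the consolidated expression \eqref{lichexpression2} directly, whereas the paper works from the defining form $s\bL - (1-s)i\bar\partial^*(\nabla^{1,0}\cdot\lrcorner\chi)$ together with \cref{lem:tensoridentity} to reach the same identification.
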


\begin{proof}
    Consider a variation $\omega_t = \omega + t i\partial\bar{\partial}\varphi$ for $t \in (-\epsilon, \epsilon)$. Standard variational formulas in K\"ahler geometry (see \cite[Section 2.1]{MR1787650}, \cite[Theorem 4.2]{MR3186384}, \cite{gauduchon2010calabi}) yield:
    \begin{align*}
        \frac{\partial}{\partial t}\Big|_{t=0} \tr_{\omega_t} \chi &= -\langle i\ddbar\varphi, \chi\rangle,\quad
       \frac{\partial}{\partial t}\Big|_{t=0} \omega_t^m  = \Delta \varphi \, \omega^m, \\
        \frac{\partial}{\partial t}\Big|_{t=0} R(\omega_t) &= -\Delta^2 \varphi - \langle \ric, i\ddbar \varphi \rangle = -\mD^*\mD \varphi + \langle \partial R,  {\partial} \varphi \rangle.
    \end{align*}
    Combining these results, we obtain:
    \begin{align}\label{varoftwistedscalar}
    \frac{d}{dt}\Big|_{t=0} R^s = -s \bL(\varphi) + s \langle \partial R , \partial\varphi \rangle + (1-s) \langle i\partial\bar{\partial} \varphi, \chi \rangle.
    \end{align}
    Now computing the variation of the twisted Calabi functional (omitting the volume form $\omega^m$ for brevity), we have
    \begin{align*}
        &\frac{d}{dt}\Big|_{t=0} \mC^s(\omega + t i\ddbar \varphi) \\
        &= \int_M \left( -s \mD^*\mD \varphi +  s \langle \partial R , \partial\varphi \rangle + (1-s) \langle i\partial\bar{\partial} \varphi, \chi \rangle \right) (\tR-\underline{R}^s)\\
        &
         + \frac{1}{2} \int_M (\tR-\underline{R}^s)^2 \Delta \varphi \\
        &= \int_M \left( -s \bL(\varphi) +   s \langle \partial R , \partial\varphi \rangle + (1-s) \langle i\partial\bar{\partial} \varphi, \chi \rangle -   \langle \partial R^s , \partial\varphi \rangle\right) (\tR-\underline{R}^s).
        \end{align*}
        According to the expression of twisted Lichnerowicz operator \eqref{lichexpression2}, we have
        \begin{align*}
         &-s \bL(\varphi) +   s \langle \partial R , \partial\varphi \rangle + (1-s) \langle i\partial\bar{\partial} \varphi, \chi \rangle -   \langle \partial R^s , \partial\varphi \rangle
         \\
         =&-s \bL(\varphi)+(1-s)\left( \langle i\ddbar \varphi, \chi \rangle + \langle  \partial \tr_\omega \chi ,  {\partial} \varphi \rangle \right)\\
         =&-\bL^s(\varphi).
        \end{align*}
        Hence,
        \begin{align*}
        D(\mC^s)_\omega(\varphi)
        &= -\int_M \bL^s(\varphi) (\tR-\underline{R}^s) = -\llangle\varphi, \bL^s(\tR)\rrangle.
    \end{align*}
    Therefore, $\omega$ is a critical point if and only if $\bL^s(R^s) = 0$, which occurs precisely when $\tR$ is constant, i.e., when $\omega$ is a twisted cscK metric.
\end{proof}

\begin{rem}
    In the classical case $s = 1$, the critical points of the Calabi functional are extremal K\"ahler metrics, as established by Calabi \cite{MR645743}. However, for $s \neq 1$, the critical points are exactly the twisted cscK metrics. This reveals a fundamental structural difference: twisted extremal K\"ahler metrics do not arise as critical points of the twisted Calabi functional.
\end{rem}

The first variation formula immediately yields monotonicity of twisted Calabi functional along the twisted Calabi flow.

\begin{cor}[Energy decreases along the flow] \label{cor:energy-decrease}
    The twisted Calabi energy is strictly decreasing along the twisted Calabi flow, except at critical points.
\end{cor}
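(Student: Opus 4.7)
The plan is to differentiate $\mC^s(\varphi(t))$ along a solution of the twisted Calabi flow and apply the first variation formula from \cref{thm:first-variation} with the variation field equal to $\dot\varphi = \tR - \btR$.

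Concretely, I would first note that along the flow $\dot\varphi = \tR - \btR$, so by \cref{thm:first-variation},
\begin{align*}
\frac{d}{dt}\mC^s(\varphi(t)) = D(\mC^s)_{\omega_\varphi}(\dot\varphi) = -\llangle \tR - \btR,\ \bL^s(\tR)\rrangle_{\omega_\varphi}.
\end{align*}
Next, since $\btR$ is a constant and \cref{lem:twistedlich} gives $\ker \bL^s = \{\text{constants}\}$ for $s\in(0,1)$, we may replace $\bL^s(\tR)$ by $\bL^s(\tR-\btR)$ without changing the value. Thus
\begin{align*}
\frac{d}{dt}\mC^s(\varphi(t)) = -\llangle \tR-\btR,\ \bL^s(\tR-\btR)\rrangle_{\omega_\varphi}.
\end{align*}

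By the self-adjointness and semi-positivity of $\bL^s$ (\cref{lem:twistedlich}), the right-hand side is non-positive, which gives monotonicity. For the strict inequality, I would invoke the identity
\begin{align*}
\llangle \bL^s(f), f\rrangle = s\,\llangle \mD f,\mD f\rrangle + (1-s)\,\llangle \nabla^{1,0}f, \nabla^{1,0}f\rrangle_{\chi}
\end{align*}
from \cref{lem:twistedlich}, applied to $f=\tR-\btR$. For $s\in(0,1)$ and $\chi>0$, this quantity vanishes if and only if $\nabla^{1,0}f=0$, i.e.\ $f$ is constant; combined with $\int_M (\tR-\btR)\,\omega_\varphi^m = 0$, this forces $\tR\equiv \btR$, which by definition means $\omega_\varphi$ is a twisted cscK metric, hence a critical point of $\mC^s$.

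There is no real obstacle here: the result is a direct application of the first variation formula together with the semi-positivity of the twisted Lichnerowicz operator. The only small points to be careful about are (i) justifying the replacement $\bL^s(\tR)=\bL^s(\tR-\btR)$ using $\bL^s(\text{const})=0$, and (ii) ruling out the degenerate boundary case by noting that $\tR-\btR$ has zero average, so being constant forces it to be identically zero. The argument as stated requires $s\in(0,1)$; for $s=1$ the statement still holds for the ordinary Calabi flow, but the characterization of equality then involves extremal rather than cscK metrics, which lies outside the scope of this corollary.
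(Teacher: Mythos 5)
Your argument is correct and is essentially the paper's proof: both differentiate $\mC^s$ along the flow, apply the first variation formula of \cref{thm:first-variation} with $\dot\varphi = \tR - \btR$, pass from $\bL^s(\tR)$ to $\bL^s(\tR-\btR)$ using $\bL^s(\text{const})=0$, and invoke the semi-positivity and kernel characterization of $\bL^s$ from \cref{lem:twistedlich}. You spell out the equality case (zero average forcing $\tR\equiv\btR$) slightly more explicitly than the paper, but the substance is identical.
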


\begin{proof}
    Along the twisted Calabi flow $\frac{\partial \varphi}{\partial t} = \tR(\varphi) - \btR$, we have:
    \begin{align*}
        \frac{d}{dt} \mC^s(\varphi(t))
        &= -\int_M (\tR(\varphi) - \btR) \bL^s(\tR(\varphi)) \, \omega_\varphi^m \\
        &= -\int_M (\tR(\varphi) - \btR)\bL^s(\tR(\varphi) - \btR) \, \omega_\varphi^m \leq 0,
    \end{align*}
    where the inequality follows from the semi-positivity of $\bL^s$ (\cref{lem:twistedlich}). Equality holds if and only if $\bL^s(\tR(\varphi)) = 0$, i.e., at twisted cscK metrics.
\end{proof}

\subsection{Geometric properties of the twisted Calabi flow}

The twisted Calabi flow exhibits favorable geometric properties with respect to the Mabuchi metric geometry on $\mathcal{H}$.

\begin{thm}[Distance Decreasing Property] \label{thm:distance-decreasing}
    Assume $s\in(0,1)$. Let $\varphi(\tau): [0,1] \to \mathcal{H}$ be a smooth curve, and let $\varphi(\tau,t)$ denote its deformation under the twisted Calabi flow at time $t$. Denote by $l(t)$ the length of the curve $\varphi_t(\tau) := \varphi(\tau,t)$. Then:
    \[
    \frac{d l(t)}{d t} = -\int_0^1 \left( \int_M \left( \frac{\partial \varphi}{\partial \tau} \right)^2 \omega_\varphi^m \right)^{-\frac{1}{2}} \left( \int_M \frac{\partial \varphi}{\partial \tau} \bL^s \left( \frac{\partial \varphi}{\partial \tau} \right) \omega_\varphi^m \right) d\tau \leq 0.
    \]
    Consequently, the twisted Calabi flow strictly decreases distances in $\mathcal{H}$, unless the curve degenerates to a point.
\end{thm}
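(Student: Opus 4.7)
The plan is to differentiate the length functional under the integral sign and reduce the whole statement to a clean algebraic identity. Setting $\psi = \partial\varphi/\partial\tau$, we write
\[
l(t) = \int_0^1 \Bigl(\int_M \psi^2\, \omega_\varphi^m\Bigr)^{1/2} d\tau,
\]
so that
\[
\frac{dl(t)}{dt} = \int_0^1 \frac{1}{2}\Bigl(\int_M \psi^2\, \omega_\varphi^m\Bigr)^{-1/2} \frac{d}{dt}\Bigl(\int_M \psi^2\, \omega_\varphi^m\Bigr) d\tau.
\]
The whole theorem then collapses to the single identity
\[
\frac{d}{dt}\int_M \psi^2\, \omega_\varphi^m = -2\int_M \psi\, \bL^s(\psi)\, \omega_\varphi^m,
\]
after which the claimed formula follows by direct substitution and the inequality $dl/dt \leq 0$ is immediate from the semi-positivity of $\bL^s$ in \cref{lem:twistedlich}; strict decrease for non-degenerate curves uses $\ker \bL^s = \{\text{constants}\}$ from the same lemma.

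To prove this identity, I would differentiate under the integral into the $\partial_t\psi$-piece and the $\partial_t(\omega_\varphi^m)$-piece. The volume piece is handled by the standard formula $\partial_t(\omega_\varphi^m) = \Delta_{\omega_\varphi} \dot\varphi \cdot \omega_\varphi^m$ together with the flow equation $\dot\varphi = \tR - \btR$, giving $\int_M \psi^2\, \partial_t(\omega_\varphi^m) = \int_M \psi^2\, \Delta \tR\, \omega_\varphi^m$. For the $\partial_t\psi$-piece, the commutation $\partial_t\psi = \partial_\tau \dot\varphi = \partial_\tau \tR$ together with the first-variation formula for the twisted scalar curvature already used in the proof of \cref{thm:first-variation} (cf.\ \eqref{varoftwistedscalar}) gives
\[
\partial_\tau \tR = -\bL^s(\psi) + \langle \partial \tR, \partial \psi\rangle
\]
when applied at the metric $\omega_\varphi$, and therefore
\[
2\int_M \psi\, \partial_t\psi\, \omega_\varphi^m = -2\int_M \psi\, \bL^s(\psi)\, \omega_\varphi^m + 2\int_M \psi\, \langle \partial \tR, \partial \psi\rangle\, \omega_\varphi^m.
\]

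The last step is the cancellation of the two transport terms. Using $2\psi\,\langle\partial \tR, \partial\psi\rangle = \langle \partial \tR, \partial(\psi^2)\rangle$ and self-adjointness of the Kähler Laplacian yields $\int_M \langle \partial \tR, \partial(\psi^2)\rangle\, \omega_\varphi^m = -\int_M \psi^2\, \Delta \tR\, \omega_\varphi^m$, which exactly cancels the volume contribution. What remains is precisely $-2\int_M \psi\, \bL^s(\psi)\, \omega_\varphi^m$, as claimed. The only genuinely subtle point — not so much an obstacle as a verification — is that the first-variation formula for $\tR$ was originally recorded at the background metric $\omega$, but it must be applied along the flow at arbitrary $\omega_\varphi$; this is legitimate because the derivation of \eqref{varoftwistedscalar} uses no special feature of the base point, so the same identity holds at every K\"ahler metric in the class. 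Aside from this verification, the argument is a short direct computation whose main challenge is careful bookkeeping of the integration by parts.
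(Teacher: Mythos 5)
Your proposal is correct and follows essentially the same route as the paper: both reduce the theorem to the pointwise-in-$\tau$ identity $\frac{d}{dt}\int_M\psi^2\,\omega_\varphi^m = -2\int_M\psi\,\bL^s(\psi)\,\omega_\varphi^m$, obtained by combining the volume-form variation $\partial_t\omega_\varphi^m = \Delta_\varphi\tR\cdot\omega_\varphi^m$ with the commuted derivative $\partial_t\psi=\partial_\tau\tR$ and cancelling the transport term against the Laplacian term by integration by parts. Your only departures are cosmetic — differentiating $l(t)$ directly rather than first treating $E(t)$, and compressing \eqref{varoftwistedscalar} into the clean form $\partial_\tau\tR = -\bL^s(\psi) + \langle\partial\tR,\partial\psi\rangle$ before integrating — and your remark that the variation formula holds at any base metric in the class is correct and is used implicitly by the paper as well.
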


\begin{proof}
    For each $\tau \in [0,1]$, the function $\varphi(\tau,t)$ satisfies the twisted Calabi flow equation:
    \begin{align*}
        \begin{cases}
            \frac{\partial \varphi(\tau,t)}{\partial t} = R^s(\varphi(\tau,t)) - \underline{R}^s, \\
            \varphi(\tau,0) = \varphi(\tau).
        \end{cases}
    \end{align*}
    The energy of the curve $\varphi_t(\tau) = \varphi(\tau,t)$ is given by:
    \[
    E(t) = \int_0^1 \int_M \left( \frac{\partial \varphi}{\partial \tau} \right)^2 \omega_{\varphi(\tau,t)}^m d\tau.
    \]
    Differentiating with respect to $t$:
    \begin{align*}
        \frac{d E(t)}{d t}
        &= \int_0^1 \int_M 2 \frac{\partial \varphi}{\partial \tau} \frac{\partial^2 \varphi}{\partial \tau \partial t} \omega_\varphi^m d\tau + \int_0^1 \int_M \left( \frac{\partial \varphi}{\partial \tau} \right)^2 \Delta_\varphi \left( \frac{\partial \varphi}{\partial t} \right) \omega_\varphi^m d\tau \\
        &= \int_0^1 \int_M 2 \frac{\partial \varphi}{\partial \tau} \frac{\partial}{\partial \tau} (R^s(\varphi) - \underline{R}^s) \omega_\varphi^m d\tau \\
        &\quad + \int_0^1 \int_M \left( \frac{\partial \varphi}{\partial \tau} \right)^2 \Delta_\varphi (R^s(\varphi) - \underline{R}^s) \omega_\varphi^m d\tau.
    \end{align*}
    Using the variation formula \eqref{varoftwistedscalar}:
    \[
    \frac{\partial}{\partial \tau} (R^s(\varphi) - \underline{R}^s) = -s \bL \left( \frac{\partial \varphi}{\partial \tau} \right) + s \langle \partial R, \bar{\partial} \frac{\partial \varphi}{\partial \tau} \rangle + (1-s) \langle i\ddbar \frac{\partial \varphi}{\partial \tau}, \chi \rangle,
    \]
    we obtain:
    \begin{align*}
        \frac{d E(t)}{d t}
        &= \int_0^1 \int_M 2 \frac{\partial \varphi}{\partial \tau} \left( -s \bL \frac{\partial \varphi}{\partial \tau} + s \langle \partial R, \bar{\partial} \frac{\partial \varphi}{\partial \tau} \rangle + (1-s) \langle i\ddbar \frac{\partial \varphi}{\partial \tau}, \chi \rangle \right) \omega_\varphi^m d\tau \\
        &\quad - \int_0^1 \int_M 2 \frac{\partial \varphi}{\partial \tau} \langle \bar{\partial} \frac{\partial \varphi}{\partial \tau}, \partial R^s \rangle \omega_\varphi^m d\tau \\
        &= \int_0^1 \int_M 2 \frac{\partial \varphi}{\partial \tau} \left( -s \bL \frac{\partial \varphi}{\partial \tau} + (1-s) \left( \langle \bar{\partial} \frac{\partial \varphi}{\partial \tau}, \partial \tr_\varphi \chi \rangle + \langle i\ddbar \frac{\partial \varphi}{\partial \tau}, \chi \rangle \right) \right) \omega_\varphi^m d\tau \\
        &= -2 \int_0^1 \int_M \frac{\partial \varphi}{\partial \tau} \bL^s \left( \frac{\partial \varphi}{\partial \tau} \right) \omega_\varphi^m d\tau \leq 0.
    \end{align*}
    Let $l(s)$ be the length of $\varphi(t,s)$, i.e.,
    \begin{align*}
    	l(t) = \int_0^1\sqrt{\int_M \left(\frac{\partial\varphi}{\partial \tau} \right)^2\omega_{\varphi}^m}\ d\tau.
    \end{align*}
    Then we have
    \begin{align*}
    	\frac{dl(t)}{dt}=&-\int_0^1\left(\int_M \left(\frac{\partial\varphi}{\partial \tau} \right)^2\omega_{\varphi}^m\right)^{-\frac{1}{2}}\left(\int_M \frac{\partial\varphi}{\partial \tau}\bL^s\left(\frac{\partial\varphi}{\partial \tau}\right)\omega_{\varphi }^m\right)d\tau\leq 0.
    \end{align*}
    From this formula,
    if the length of a smooth curve is not decreasing, then
    \begin{align*}
    	\int_M \frac{\partial\varphi}{\partial \tau}\bL^s\left(\frac{\partial\varphi}{\partial \tau}\right)\omega_{\varphi }^m
    	= s\|\mD\frac{\partial\varphi}{\partial \tau}\|_{L^2}^2+(1-s)\|\partial \frac{\partial\varphi}{\partial \tau}\|_{L^2(\chi)}^2\equiv 0,
    \end{align*}
    i.e.,
    \begin{align*}
    	\frac{\partial\varphi}{\partial \tau}\equiv 0,
    \end{align*}
    equivalently, the curve $\varphi(s)$  degenerates to a point.

%
%   \textcolor[rgb]{1.00,0.00,0.00}{ Differentiating and applying the Cauchy-Schwarz inequality yields the stated formula.} The strict positivity of $\bL^s$ (\cref{lem:twistedlich}) ensures that the length decreases unless $\frac{\partial \varphi}{\partial \tau} \equiv 0$, i.e., unless the curve degenerates to a point.
\end{proof}

\begin{rem}
    In the classical setting, Calabi and Chen \cite{MR4521129} proved that the Calabi flow decreases the distance of any two points in $\mH$, with the exception of two cases: when $\varphi_\tau'$ is a holomorphic potential, or when the curve degenerates. In the twisted setting, the strict positivity of $\bL^s$ eliminates the first exceptional case, resulting in stronger distance-decreasing properties.
\end{rem}

\subsection{Second variation and convexity analysis}

We now analyze the second variation of the twisted Calabi functional at critical points, revealing its strong convexity properties.

\begin{thm}[Second variation] \label{thm:hessian-formula}
     Assume $s\in(0,1)$. At a critical point of the twisted Calabi functional, the Hessian is given by:
    \[
    \mathrm{Hess} \, \mC^s(\varphi, \phi) = \int_M \langle \bL^s(\varphi),  {\bL}^s(\phi) \rangle \,\omega^m , \quad \forall \varphi, \phi \in C^\infty(M,\mathbb{R}).
    \]
    In particular, $\mathrm{Hess} \, \mC^s$ is strictly positive definite on $\{f:f\in C^\infty_0(M,\bR):\int_Mf\omega^m=0\}$ at critical points.
\end{thm}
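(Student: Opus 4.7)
The plan is to differentiate the first variation identity from \cref{thm:first-variation} once more in an independent direction, and exploit the critical-point condition $R^s \equiv \underline{R}^s$ to collapse the resulting second variation to a single surviving term. Concretely, I would introduce a two-parameter family $\omega_{t,\tau} = \omega + t\,i\ddbar\varphi + \tau\,i\ddbar\phi$ and compute $\partial_t\partial_\tau\big|_{(0,0)}\mC^s(\omega_{t,\tau})$.

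Writing $\mC^s(\omega_t) = \tfrac{1}{2}\int_M (R^s - \underline R^s)^2\,\omega_t^m$, a single $t$-derivative gives
\[
\frac{d}{dt}\mC^s(\omega_t) = \int_M (R^s - \underline{R}^s)\,\dot R^s\,\omega^m_t + \tfrac{1}{2}\int_M (R^s - \underline R^s)^2\,\Delta_t\varphi\,\omega^m_t,
\]
where $\dot R^s$ denotes the linearization of the twisted scalar curvature. Every summand already carries a factor of $(R^s - \underline R^s)$, so when I take a second derivative in $\tau$ and specialize to $(t,\tau)=(0,0)$, any term in which at least one factor of $(R^s - \underline R^s)$ remains must vanish at a critical point. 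The only surviving contribution is the pure cross term
\[
\mathrm{Hess}\,\mC^s(\varphi,\phi) = \int_M \dot R^s(\varphi)\,\dot R^s(\phi)\,\omega^m.
\]

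Next I would identify $\dot R^s(\varphi)$ with $-\bL^s(\varphi)$ at a twisted cscK metric. From \eqref{varoftwistedscalar},
\[
\dot R^s(\varphi) = -s\bL(\varphi) + s\langle\partial R,\partial\varphi\rangle + (1-s)\langle i\ddbar\varphi,\chi\rangle.
\]
At a twisted cscK metric $R^s$ is constant, which gives $s\,\partial R = (1-s)\,\partial\tr_\omega\chi$. Together with \cref{lem:tensoridentity} and the expression \eqref{lichexpression2} for $\bL^s$, the right-hand side rewrites as $-\bL^s(\varphi)$; this is in fact exactly the rewriting already performed inside the proof of \cref{thm:first-variation}, which I would simply reuse. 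Substituting into the cross-term formula yields the stated Hessian identity.

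Strict positivity on $\{f \in C^\infty(M,\bR) : \int_M f\,\omega^m = 0\}$ is then immediate: if $\int_M \bL^s(\varphi)^2\,\omega^m = 0$ then $\bL^s(\varphi) = 0$, so $\varphi$ is constant by \cref{lem:twistedlich}, and the mean-zero constraint forces $\varphi \equiv 0$. The only real difficulty is the second-variation bookkeeping, but since every term in the first derivative already carries a factor of $(R^s - \underline R^s)$, the simplification at a critical point is essentially automatic and no new geometric input beyond \cref{thm:first-variation} and the structural identity $\dot R^s = -\bL^s$ at a critical point is required.
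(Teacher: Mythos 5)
Your proposal is correct, and the core strategy — differentiate along a two-parameter family $\omega + t\,i\ddbar\varphi + \tau\,i\ddbar\phi$ and exploit the critical-point condition to kill all but one cross term — is the same as the paper's. The paper, however, first passes to the integrated form of the first derivative, $D\mC^s_\omega(\varphi) = -s\llangle\mD\varphi,\mD\tR\rrangle - (1-s)\llangle\nabla^{1,0}\varphi,\nabla^{1,0}\tR\rrangle_\chi$, and then differentiates that; this forces an extra technical step, computing $\partial_\tau(\nabla^{1,0}\tR)$ via the identity $\nabla^{1,0}\tR\lrcorner\omega = i\bar\partial\tR$ and observing that $\nabla^{1,0}\tR\lrcorner\, i\ddbar\phi$ vanishes at the critical point since $\tR$ is constant there. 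Your route stays with the raw form $\tfrac{1}{2}\int(R^s - \underline{R}^s)^2\omega^m$, so the critical-point vanishing is transparent: every summand of the first derivative already carries at least one factor of $(R^s - \underline{R}^s)$, hence only the single product-rule term in which that factor is itself differentiated survives, and no commutation of $\nabla^{1,0}$ with the $\tau$-derivative needs to be tracked at all. This is a genuinely cleaner bookkeeping, and the identification $\dot R^s(\varphi) = -\bL^s(\varphi)$ at a twisted cscK metric (via $s\,\partial R = (1-s)\,\partial\tr_\omega\chi$, \cref{lem:tensoridentity}, and \eqref{lichexpression2}) is the same algebra the paper already performs inside the proof of \cref{thm:first-variation}. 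The strict-positivity conclusion via $\ker\bL^s|_{C^\infty_0}=\{0\}$ from \cref{lem:twistedlich} matches the paper as well.
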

\begin{proof}
    Consider the two-parameter family $\omega(\sigma, \tau) = \omega + \sigma i\ddbar \varphi + \tau i\ddbar \phi$, and denote $\tR(\sigma,\tau) = \tR(\omega(\sigma,\tau))$. The second variation is:
    \begin{align*}
        &D^2 \mC^s(\varphi, \phi) = \frac{\partial^2}{\partial \sigma \partial \tau} \Big|_{\sigma,\tau=0} \mC^s(\omega + \sigma i\ddbar \varphi + \tau i\ddbar \phi) \\
        &= \frac{\partial}{\partial \tau} \Big|_{\tau=0} \int_M -s \langle \mD \varphi,  \mD \tR \rangle (\omega + \tau i\ddbar \phi)^m \\
        &\quad + (1-s) i \frac{\partial}{\partial \tau} \Big|_{\tau=0} \int_M \chi(\nabla^{1,0} \varphi, \nabla^{1,0} \tR)(\omega + \tau i\ddbar \phi)^m .
    \end{align*}
    The key step involves computing the variation of $\nabla^{0,1} \tR$. Consider  the identity
    \begin{align}\label{keyidentity}
     \nabla^{1,0} \tR \lrcorner \omega =  i \bar\partial \tR.
    \end{align}
 The variation of the left hand side of \eqref{keyidentity} is
    \begin{align*}
    \frac{\partial}{\partial \tau} \Big|_{\tau=0} (\nabla^{1,0} \tR \lrcorner \omega(0,\tau))=i \bar\partial \frac{\partial}{\partial \tau} \Big|_{\tau=0}\tR(0,\tau)=\nabla^{1,0}\left(\frac{\partial}{\partial \tau} \Big|_{\tau=0}\tR(0,\tau)\right)  \lrcorner \omega(0,0).
    \end{align*}
    Since $\tR(0,0)=const$, the variation of the right hand side of \eqref{keyidentity} is given by
    \begin{align*}
        \frac{\partial}{\partial \tau} \Big|_{\tau=0} (\nabla^{1,0} \tR \lrcorner \omega )
        &= \frac{\partial}{\partial \tau} \Big|_{\tau=0} (\nabla^{1,0} \tR) \lrcorner \omega + \nabla^{1,0} \tR \lrcorner i\ddbar \phi  = \frac{\partial}{\partial \tau} \Big|_{\tau=0} (\nabla^{1,0} \tR) \lrcorner \omega .
    \end{align*}
    Hence
    \begin{align*}
    \frac{\partial}{\partial \tau} \Big|_{\tau=0} (\nabla^{1,0} \tR)=\nabla^{1,0}  \frac{\partial}{\partial \tau} \Big|_{\tau=0}\tR.
    \end{align*}
    Combining this with the first variation formula \eqref{varoftwistedscalar},
    we obtain:
    \begin{align*}
        \frac{\partial}{\partial \tau} \Big|_{\tau=0} (\nabla^{1,0} \tR)
        &= \nabla^{1,0} \left( -s \bL(\phi) + s \langle \partial R, {\partial} \phi \rangle + (1-s) \langle i\ddbar \phi, \chi \rangle  \right).
    \end{align*}

    Now we compute the Hessian:
    \begin{align*}
        \mathrm{Hess} &\, \mC^s(\varphi, \phi) \\
        &= \int_M -s \langle \mD \varphi,  {\mD} \left( -s \bL(\phi) + s \langle \partial R , \partial\phi \rangle + (1-s) \langle i\partial\bar{\partial} \phi, \chi \rangle \right) \rangle \\
        &\quad + (1-s) \int_M \chi(\nabla^{1,0} \varphi, \nabla^{0,1} \left( -s \bL(\phi) + s \langle \partial R , \partial\phi \rangle + (1-s) \langle i\partial\bar{\partial} \phi, \chi \rangle\right)) \\
        &= -\int_M \langle \bL^s(\varphi), -s \bL(\phi) + s \langle \partial R , \partial\phi \rangle + (1-s) \langle i\partial\bar{\partial} \phi, \chi \rangle \rangle.
    \end{align*}
    Note that
$\tR=sR-(1-s)\mathrm{tr}_\omega\chi=const$ implies that
$$
s\langle \partial R , \partial\phi \rangle= (1-s)\langle \partial\mathrm{tr}_\omega\chi , \partial\phi \rangle.
$$
Combining \cref{lem:tensoridentity}, we have
\begin{align*}
-s \bL(\phi) + s \langle \partial R , \partial\phi \rangle + (1-s) \langle i\partial\bar{\partial} \phi, \chi \rangle=-\bL^s(\phi).
\end{align*}
Hence
 \begin{align*}
	\mathrm{Hess} &\, \mC^s(\varphi, \phi)
 = \int_M  \langle \bL^s(\phi),  \bL^s(\varphi) \rangle\, \omega^m.
\end{align*}
    The strict positivity follows from the injectivity of $\bL^s$ on $C^\infty_0(M,\mathbb{R})$.
\end{proof}

\begin{rem}
	For the Calabi functional, its Hessian at critical point(see \cite{MR780039})  is given by
	\begin{align*}
		\mathrm{Hess}\, \mC(\varphi,\phi)=\llangle \overline{\bL}\bL\varphi,\phi\rrangle=\llangle \bL\varphi,\overline{\bL}\phi\rrangle,
	\end{align*}
	and the two operators $\bL, \overline{\bL}$ commute if $\omega$ is an extremal K\"ahler metric. The conjugate of the twisted Lichnerowicz operator does not appear in the expression of the $\mathrm{Hess}\,\mC^s$. This is because, at a twisted cscK metric, the twisted Lichnerowicz operator  $$
	\bL^s(f)=s\Delta^2f+\langle \mathrm{Ric}^s, i\ddbar f\rangle+\langle \partial R^s, \partial f\rangle=s\Delta^2f+\langle \mathrm{Ric}^s, i\ddbar f\rangle
	$$
	is a real operator, i.e., $\bL^s=\overline{\bL^s}$.
\end{rem}

\subsection{Geometric consequences and the twisted  $K$-energy}

The convexity results have important implications for the geometry of the space of K\"ahler metrics and the behavior of the twisted  $K$-energy.

\begin{cor}[Isolation of critical points] \label{cor:isolation}
    Assume $s\in(0,1)$. Twisted cscK metrics, when they exist, are isolated within their K\"ahler class.
\end{cor}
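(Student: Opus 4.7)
The plan is to realize twisted cscK metrics in $[\omega]$ as the zero set of a nonlinear elliptic operator whose linearization at $\omega$ is $-\bL^s$, and then apply the inverse function theorem, exploiting the invertibility of $\bL^s$ on normalized potentials guaranteed by \cref{lem:twistedlich}.

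First, I would parametrize the Kähler class by the slice $\mathcal{H}_0 := \{\varphi \in \mathcal{H} : \int_M \varphi\,\omega^m = 0\}$, so that every metric in $[\omega]$ corresponds bijectively to some $\varphi \in \mathcal{H}_0$. Since $\underline{R}^s$ depends only on the cohomology class $[\omega]$ and on $\chi$, the map
\[
F(\varphi) := R^s(\omega_\varphi) - \underline{R}^s
\]
is well-defined from $\mathcal{H}_0$ to zero-mean functions, with $F^{-1}(0)$ consisting of the normalized twisted cscK potentials in $[\omega]$.

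Second, I would compute the linearization $DF_0$. The variation formula \eqref{varoftwistedscalar}, combined with the identity $\partial R^s = 0$ at the twisted cscK metric $\omega$ (which converts $s\langle \partial R, \partial\phi\rangle$ into $(1-s)\langle \partial\tr_\omega\chi, \partial\phi\rangle$) and \cref{lem:tensoridentity}, yields $DF_0(\phi) = -\bL^s(\phi)$, exactly the operator governing the Hessian in \cref{thm:hessian-formula}.

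Third, I would promote $F$ to a smooth map between Hölder completions, $F : U \subset c^{4,\alpha}_0(M) \to c^{0,\alpha}_0(M)$, for a suitable neighborhood $U$ of $0$ small enough that $\omega_\varphi$ remains Kähler. The operator $-\bL^s$ is fourth-order elliptic and formally self-adjoint, hence Fredholm of index zero between these spaces. By \cref{lem:twistedlich}, $\ker \bL^s|_{C^\infty_0} = \{0\}$, and standard elliptic regularity promotes this to triviality of kernel and cokernel on the zero-average Hölder spaces. Thus $DF_0$ is a Banach-space isomorphism, and the inverse function theorem produces a $c^{4,\alpha}$-neighborhood of $0$ in $\mathcal{H}_0$ on which $F^{-1}(0) = \{0\}$. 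Since every twisted cscK potential is automatically smooth by elliptic regularity applied to the twisted cscK equation, this yields isolation in the smooth category as well.

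The main technical point is the third step: one must select the function spaces so that $F$ is $C^1$, the positivity $\omega_\varphi>0$ is preserved, and $DF_0$ is a genuine isomorphism after restriction to zero-average subspaces. No new estimate is needed beyond the injectivity/coercivity of $\bL^s$ on $C^\infty_0$ supplied by \cref{lem:twistedlich}; what remains is standard Fredholm and inverse-function-theorem bookkeeping. An alternative, more direct route would replace the inverse function theorem with a Taylor expansion: if $\omega_{\varphi_k}$ were a sequence of distinct twisted cscK metrics with $\varphi_k \to 0$ in $\mathcal{H}_0$, expanding $DF_0(\varphi_k)$ against $\varphi_k$ and using $F(\varphi_k)=0$ would force $\|\bL^s \varphi_k\|_{L^2}^2 = O(\|\varphi_k\|^3)$, contradicting the coercivity $\|\bL^s \varphi_k\|_{L^2} \geq C\|\varphi_k\|$ on $C^\infty_0$ obtained from \cref{lem:twistedlich} and the spectral gap of $\bL^s$.
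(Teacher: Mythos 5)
Your argument is correct, and it takes a genuinely different (and more explicit) route than the paper. The paper's proof is a one-line appeal to \cref{thm:hessian-formula}: ``the strict positive definiteness of $\mathrm{Hess}\,\mC^s$ at critical points prevents non-trivial degenerations.'' That is, the paper works with the eighth-order quadratic form $\varphi \mapsto \|\bL^s\varphi\|_{L^2}^2$ attached to the Calabi functional $\mC^s$, and reads off isolation from non-degeneracy of the critical point in the Morse-theoretic sense. You instead bypass $\mC^s$ entirely and linearize the fourth-order twisted cscK equation $R^s(\omega_\varphi)-\underline{R}^s = 0$ directly, identify the linearization as $-\bL^s$ (correctly using $\partial R^s = 0$ at the critical metric together with \cref{lem:tensoridentity}), and invoke the inverse function theorem on H\"older slices. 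Both arguments ultimately rest on the same ingredient, the injectivity of $\bL^s$ on $C^\infty_0$ from \cref{lem:twistedlich}, but your route has two advantages: it works with a fourth-order rather than eighth-order elliptic operator, and it turns the paper's informal ``prevents degenerations'' into an honest Fredholm/IFT argument. One small bookkeeping point worth fixing in your write-up: $F(\varphi) = R^s(\omega_\varphi)-\underline{R}^s$ has zero mean with respect to $\omega_\varphi^m$, not $\omega^m$, so it does not literally land in $c^{0,\alpha}_0(M)$ as stated; the standard fix is to target $c^{0,\alpha}(M)/\mathbb{R}$ (or subtract the $\omega^m$-mean), noting that the linearization $-\bL^s$ does map into zero-$\omega^m$-mean functions and that any $\varphi$ with $F(\varphi)\equiv\text{const}$ in fact has $F(\varphi)\equiv 0$ since $\int_M F(\varphi)\,\omega_\varphi^m = 0$.
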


\begin{proof}
    The strict positive definiteness of $\mathrm{Hess} \, \mC^s$ at critical points prevents the existence of non-trivial degenerations in a neighborhood of any twisted cscK metric.
\end{proof}

\begin{remark}
\cref{cor:isolation} is a straightforward consequence of the convexity of the twisted Calabi functional. By approximation of weak geodesics, Berman-Berndtsson \cite{MR3671939}(see also Chen-Li-P\u{a}uni \cite{MR3582114}) proved that there exists at most one twisted cscK metric in each K\"ahler class. The convexity properties in the twisted setting characterize the local behavior of twisted cscK metrics.
\end{remark}
This contrasts sharply with the classical case, where the connected components of extremal K\"ahler metrics form orbits of $\mathrm{Aut}^\Omega_0$, the identity component of the automorphism group preserving the K\"ahler class.

The convexity properties extend to the twisted  $K$-energy along geodesics in $\mathcal{H}$.

\begin{cor}[Convexity of twisted $K$-energy] \label{thm:convexity-K-energy}
     Assume $s\in(0,1)$. The second derivative of the twisted  $K$-energy along a smooth geodesic $\varphi(t)$ in $\mathcal{H}$ satisfies:
    \[
    \frac{d^2 \mM^s(t)}{d t^2} = \int_M \varphi'(t) \bL^s(\varphi'(t)) \, \omega_{\varphi(t)}^m \geq 0.
    \]
    Consequently:
    \begin{enumerate}
        \item The twisted  $K$-energy is convex along smooth geodesics in $\mathcal{H}$;
        \item Distinct twisted cscK metrics cannot be connected by a smooth geodesic.
    \end{enumerate}
\end{cor}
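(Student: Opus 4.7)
The plan is to compute $\tfrac{d^2 \mM^s}{dt^2}$ along a smooth geodesic $\varphi(t)$ directly from the first variation formula $D(\mM^s)_{\varphi}(\phi) = -\int_M \phi\,(R^s(\varphi) - \btR)\,\omega_\varphi^m$, and then to extract both consequences from the resulting identity via \cref{lem:twistedlich}.

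Differentiating $\tfrac{d\mM^s}{dt} = -\int_M \dot\varphi\,(R^s(\varphi) - \btR)\,\omega_\varphi^m$ in $t$ produces three contributions: a term containing $\ddot\varphi$, a term involving $\tfrac{d}{dt}R^s(\varphi)$ given by the variation formula \eqref{varoftwistedscalar} applied at base metric $\omega_\varphi$, and a term $-\int_M \dot\varphi\,(R^s(\varphi) - \btR)\Delta_\varphi\dot\varphi\,\omega_\varphi^m$ coming from the variation of the volume form. Integrating by parts in the last contribution yields $\int_M |\nabla\dot\varphi|_\varphi^2\,(R^s(\varphi) - \btR)\,\omega_\varphi^m + \int_M \dot\varphi\,\langle \partial R^s(\varphi), \partial\dot\varphi\rangle_\varphi\,\omega_\varphi^m$, and the geodesic equation $\ddot\varphi = |\nabla\dot\varphi|_{\omega_\varphi}^2$ makes the two $\ddot\varphi$ contributions cancel exactly. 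What remains is
\[
\frac{d^2 \mM^s}{dt^2} = -\int_M \dot\varphi\left(\frac{d R^s(\varphi)}{dt} - \langle \partial R^s(\varphi),\partial\dot\varphi\rangle_\varphi\right)\omega_\varphi^m,
\]
and by the same algebraic recognition used in the proof of \cref{thm:first-variation}, combining \eqref{varoftwistedscalar}, \cref{lem:tensoridentity}, and \eqref{lichexpression2}, the parenthesized expression equals $-\bL^s(\dot\varphi)$ computed at $\omega_\varphi$. Hence $\tfrac{d^2\mM^s}{dt^2} = \int_M \dot\varphi\,\bL^s(\dot\varphi)\,\omega_\varphi^m \geq 0$ by \cref{lem:twistedlich}, which establishes both the formula and item (1).

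For item (2), suppose distinct twisted cscK metrics $\omega_0$ and $\omega_1$ were connected by a smooth geodesic $\varphi(t)$, $t \in [0,1]$. Since $R^s - \btR$ vanishes at both endpoints, the first variation formula gives $\tfrac{d\mM^s}{dt}\big|_{t=0} = \tfrac{d\mM^s}{dt}\big|_{t=1} = 0$. By (1), $\tfrac{d\mM^s}{dt}$ is nondecreasing on $[0,1]$ and vanishes at both endpoints, so it must vanish identically, forcing $\tfrac{d^2\mM^s}{dt^2}\equiv 0$. The formula above then forces $\bL^s(\dot\varphi) \equiv 0$ along the geodesic, and \cref{lem:twistedlich}(2) implies that $\dot\varphi(\cdot,t)$ is constant on $M$ for every $t$. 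Integrating in $t$ shows $i\ddbar\varphi(t)$ is independent of $t$, contradicting $\omega_0 \neq \omega_1$. The one delicate point in the plan is the integration-by-parts cancellation of the $\ddot\varphi$ terms, which relies essentially on the geodesic equation; the remaining algebra parallels that of \cref{thm:first-variation} and is purely routine.
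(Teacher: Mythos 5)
Your proposal is correct and follows essentially the same route as the paper: differentiate the first variation of $\mM^s$, integrate by parts in the volume-variation term, use the geodesic equation to cancel the $\ddot\varphi$ contribution against the resulting $|\nabla^{1,0}\dot\varphi|^2$ term, and recognize the residue as $-\bL^s(\dot\varphi)$ exactly as in the proof of \cref{thm:first-variation}. For item (2) both arguments conclude from $\frac{d^2\mM^s}{dt^2}\equiv 0$ that $\dot\varphi$ lies in $\ker\bL^s=\{\text{constants}\}$; your remark that it is $i\ddbar\varphi(t)$ (rather than $\varphi(t)$ itself) that is $t$-independent is a slightly more careful phrasing of the same contradiction.
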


\begin{proof}
    Let $\varphi(t)$ be a geodesic in $\mathcal{H}$, satisfying the geodesic equation:
    \[
    \varphi''(t) - \langle \partial \varphi'(t), \partial \varphi'(t) \rangle_{\varphi(t)} = 0.
    \]
    Differentiating the twisted K-energy along the geodesic:
    \begin{align*}
        \frac{d^2 \mM^s(t)}{d t^2}
        &= \frac{d}{d t} \int_M -\varphi'(t) (R^s - \underline{R}^s) \, \omega_{\varphi(t)}^m \\
        &= \int_M -\varphi''(t) (R^s - \underline{R}^s) + \varphi'(t) (R^s - \underline{R}^s) \Delta \varphi'(t) \\
        &\quad - \varphi'(t) \left( -s \bL(\varphi') + s \langle \partial R, \bar{\partial} \varphi' \rangle + (1-s) \langle i\ddbar \varphi', \chi \rangle \right) \omega_{\varphi(t)}^m.
    \end{align*}
    Using the geodesic equation and rearranging terms:
    \begin{align*}
        \frac{d^2 \mM^s(t)}{d t^2}
        &= \int_M \left( -\varphi''(t) + \langle \partial \varphi'(t), \bar{\partial} \varphi'(t) \rangle_{\varphi(t)} \right) (R^s - \underline{R}^s) \\
        &\quad - \varphi'(t) \left( -s \bL \varphi' + (1-s) \left( \langle \bar{\partial} \varphi'(t), \partial \tr_\varphi \chi \rangle + \langle i\ddbar \varphi'(t), \chi \rangle \right) \right) \omega_{\varphi(t)}^m \\
        &= \int_M \varphi'(t) \bL^s(\varphi'(t)) \, \omega_{\varphi(t)}^m \geq 0.
    \end{align*}

    For the second statement, suppose $\varphi(t)$ is a smooth geodesic connecting two distinct twisted cscK metrics $\omega_{\varphi(0)}$ and $\omega_{\varphi(1)}$. Let $f(t) = \frac{d \mM^s(\varphi(t))}{d t}$. Then $f(0) = f(1) = 0$ and $f'(t) \geq 0$ for all $t \in [0,1]$, implying $f(t) \equiv 0$. Consequently, $\bL^s(\varphi'(t)) \equiv 0$, so $\varphi'(t) \equiv 0$, contradicting the assumption that the endpoints are distinct.
\end{proof}

\begin{rem}
	The convexity of the K-energy along smooth geodesics was first established by Mabuchi \cite{MR909015}, while Chen proved the convexity of the
	$J$-functional along weak geodesics \cite{MR1863016}.
	Subsequent works \cite{MR3671939, MR3582114} extended this convexity to the K-energy along weak geodesics.
	The convexity of the twisted K-energy in the metric completion of the space of K\"ahler potentials $\mH$ with respect to the
$L^p$-type path length metric  $d_p$ was further demonstrated in \cite{MR3687111}. By leveraging the twisted Lichnerowicz operator, we provide a concise and explicit formulation of the convexity of the twisted K-energy along smooth geodesics.
\end{rem}

%\section{Short time existence of TCF}
\section{Short-time existence of the twisted Calabi flow}
\label{sec:short-time-existence}

This section establishes the local well-posedness of the twisted Calabi flow on compact K\"ahler manifolds. We employ analytic semigroup methods in little H\"older spaces, adapting the framework developed for the classical Calabi flow to accommodate the additional twisted terms. Our proof mainly follows He's method in \cite{MR3010280}.

\subsection{Main existence theorem}
We begin by introducing the function spaces essential for our analysis. Let $J = [0,T]$ and $\tilde{J} = (0,T]$ for some $T > 0$. For a Banach space $E$, we define the weighted spaces:

\begin{definition}[Weighted function spaces]
	For a Banach space $E$, we define:
	\begin{align*}
		C_{1/2}(J, E) &= \left\{u \in C(\tilde{J}, E) \mid t \mapsto t^{1/2}u(t) \in C(J,E),\ \lim_{t \to 0} t^{1/2}|u(t)|_E = 0 \right\}, \\
		C^1_{1/2}(J, E) &= \left\{u \in C^1(\tilde{J}, E) \mid u, \dot{u} \in C_{1/2}(J, E) \right\}.
	\end{align*}
	The corresponding norms are given by:
	\begin{align*}
		|u|_{C_{1/2}(J, E)} &= \sup_{t \in J} |t^{1/2}u(t)|_E, \\
		|u|_{C^1_{1/2}(J, E)} &= \sup_{t \in \tilde{J}} t^{1/2}(|\dot{u}(t)|_E + |u(t)|_E).
	\end{align*}
\end{definition}
Let $c^{k,\alpha}(M)$ be  the complement of $C^\infty(M)$ in $C^{k,\alpha}(M)$ under the usual norm $\|.\|_{C^{k,\alpha}}$. Thus $c^{k,\alpha}(M)$ is the Banach sub-space of $C^{k,\alpha}(M)$ with same norm.
We work with the following specific Banach spaces:
\begin{align*}
	E_0 &= c^{k-2,\alpha}(M), \quad E_1 = c^{k+2,\alpha}(M), \quad E_{1/2} = c^{k,\alpha}(M).
\end{align*}
$E_{1/2} = c^{k,\alpha}(M)$ is the interpolation space $(E_0,E_1)_{1/2}$.

Define the solution spaces:
\begin{align*}
	E_0(J) &= C_{1/2}(J, E_0), \\
	E_1(J) &= C^1_{1/2}(J, E_0) \cap C_{1/2}(J, E_1),
\end{align*}
with the norm:
\[
|u|_{E_1(J)} := \sup_{t \in \tilde{J}} t^{1/2}(|\dot{u}(t)|_{E_0} + |u(t)|_{E_1}).
\]

\begin{theorem}[Short-time existence]\label{thm:short-time-existence}
	Assume $s\in(0,1]$.
	For any smooth K\"ahler metrics $\omega, \chi$ on $M$ and parameter $s \in (0,1]$, there exist positive constants $T = C(\omega,\chi )s^2 > 0$ and $\epsilon = \epsilon(\omega,\chi) > 0$ such that for any initial data $x \in c^{k,\alpha}(M)$ for any $k\geq 2$ with $\|x\|_{C^{k,\alpha}} \leq \epsilon$, the twisted Calabi flow equation
	\[
	\begin{cases}
		\partial_t \varphi = R^s(\omega_\varphi) - \underline{R}^s, \\
		\varphi(0) = x
	\end{cases}
	\]
	admits a unique solution $\varphi(t, x) \in E_1([0,T])$. Moreover, $\varphi \in C([0,T], c^{k,\alpha}(M))\cap C((0,T), C^\infty(M))$ and for $x, y \in B_{c^{k,\alpha}(M)}(0,\epsilon)$, the corresponding solutions $\varphi(t,x), \varphi(t,y)$ satisfy the stability estimates:
	\begin{align}
		\|\varphi(t,x) - \varphi(t,y)\|_{C([0,T], c^{k,\alpha}(M))} &\leq c\|x - y\|_{c^{k,\alpha}(M)}, \label{eq:stability1} \\
		\|\varphi(t,x) - \varphi(t,y)\|_{E_1([0,T])} &\leq c\|x - y\|_{c^{k,\alpha}(M)}. \label{eq:stability2}
	\end{align}
\end{theorem}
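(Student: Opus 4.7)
The plan is to cast the twisted Calabi flow as an abstract quasilinear parabolic equation $\dot\varphi + A\varphi = F(\varphi)$ and apply the analytic semigroup / maximal regularity framework in little H\"older spaces, adapting He's proof in \cite{MR3010280}. Set $A := s\Delta_\omega^2$; this is a self-adjoint positive fourth-order elliptic operator, and by the standard theory of Lunardi it generates a strongly continuous analytic semigroup on $E_0 = c^{k-2,\alpha}(M)$ with domain $E_1 = c^{k+2,\alpha}(M)$, and the continuous interpolation space of order $1/2$ is $E_{1/2} = c^{k,\alpha}(M)$. Using the first variation formula \eqref{varoftwistedscalar} together with the Taylor expansion of $R(\omega_\varphi)$ and $\operatorname{tr}_{\omega_\varphi}\chi$ around $\varphi = 0$, the fourth-order principal part of $R^s(\omega_\varphi)$ is exactly $-s\Delta_\omega^2\varphi$, so the residual nonlinearity
\[
F(\varphi) := R^s(\omega_\varphi) - \underline{R}^s + s\Delta_\omega^2\varphi
\]
decomposes into the constant $R^s(\omega) - \underline{R}^s$, genuinely lower-order (at most third-derivative) terms in $\varphi$, and a quasilinear fourth-order error $s(\Delta_\omega^2 - \Delta_{\omega_\varphi}^2)\varphi$ whose coefficients vanish to first order in $i\partial\bar\partial\varphi$.

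The next step is the Da Prato--Grisvard maximal regularity estimate for the linear inhomogeneous problem $\dot u + Au = f$, $u(0) = x$, which yields a unique $u \in E_1(J)$ with
\[
\|u\|_{E_1(J)} \leq C(\omega)\,s^{-1}\bigl(\|f\|_{E_0(J)} + \|x\|_{c^{k,\alpha}(M)}\bigr),
\]
the factor $s^{-1}$ being forced by the scaling of $A$. I then define the fixed-point map $\Phi\colon \overline{B_r(E_1(J))} \to E_1(J)$ sending $\varphi$ to the solution of the linear problem with data $(F(\varphi), x)$. The key nonlinear estimate
\[
\|F(\varphi) - F(\psi)\|_{E_0(J)} \leq C\,s\,\bigl(r + \|x\|_{c^{k,\alpha}(M)} + T^{1/2}\bigr)\|\varphi-\psi\|_{E_1(J)}
\]
comes from Taylor-expanding the smooth functional $\varphi \mapsto R^s(\omega_\varphi)$; the decisive factor $s$ on the right appears because the fourth-order principal term has already been subtracted, so the remaining fourth-order contribution to $F$ is carried entirely by the quasilinear error above. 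Combined with the $s^{-1}$ factor from maximal regularity, the contraction constant of $\Phi$ becomes uniform in $s$, and choosing $r$ of order $\epsilon$ small and $T \leq C(\omega,\chi)s^2$ makes $\Phi$ a $1/2$-contraction. Its unique fixed point is the desired solution $\varphi \in E_1([0,T])$.

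The remaining conclusions are standard consequences. The embedding $E_1([0,T]) \hookrightarrow C([0,T], c^{k,\alpha}(M))$ gives continuity up to $t = 0$. Starting from the mild formula $\varphi(t) = e^{-tA}x + \int_0^t e^{-(t-\tau)A}F(\varphi(\tau))\,d\tau$ and bootstrapping through the parabolic smoothing $\|e^{-tA}\|_{c^\alpha \to c^{j,\alpha}} \lesssim (st)^{-j/4}$ yields $\varphi(t,\cdot) \in C^\infty(M)$ for each $t \in (0,T]$. The Lipschitz bounds \eqref{eq:stability1}--\eqref{eq:stability2} are the standard stability of the Banach contraction in the initial datum parameter, and $c = c(\omega)$ is independent of $s$ because, as above, the $s^{-1}$ from maximal regularity cancels the $s$ in the nonlinear Lipschitz estimate.

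The main obstacle will be the quantitative tracking of the $s$-dependence sharp enough to produce $T = C(\omega,\chi)s^2$. The only contribution where the $s^{-1}$ from maximal regularity is \emph{not} neutralized by a compensating $s$ in the nonlinearity is the constant term $F(0) = R^s(\omega) - \underline{R}^s$, whose effect on $\|\Phi(0)\|_{E_1(J)}$ behaves like $s^{-1}T^{1/2}\|F(0)\|_{c^\alpha}$ and must be dominated by the fixed-point radius $r \sim \epsilon$; this balance forces precisely $T^{1/2} \lesssim s$, i.e.\ $T \lesssim s^2$. Once this bookkeeping is made precise using the uniformity of the Schauder constants on $M$ (which do not see $s$), one reads off $\epsilon = \epsilon(\omega,\chi)$ and $c = c(\omega)$ as stated.
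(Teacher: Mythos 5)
Your strategy is essentially the paper's: reformulate the flow as $\dot\varphi + s\Delta_\omega^2\varphi = F(\varphi)$ with $F(\varphi) = s\Delta_\omega^2\varphi + R^s(\omega_\varphi) - \underline{R}^s$, work in the little H\"older triple $E_0 = c^{k-2,\alpha}$, $E_{1/2}=c^{k,\alpha}$, $E_1 = c^{k+2,\alpha}$ with weighted-in-time norms, and run a contraction in $E_1(J)$ following \cite{MR3010280}. The paper normalizes by the time rescaling $\tau = ts$ so the linear operator is $\Delta^2$ and the source picks up $\frac{1}{s}$; you keep $A=s\Delta^2$ and absorb this into an $s^{-1}$ in the maximal-regularity constant. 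That is an acceptable equivalent bookkeeping.

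However, your key nonlinear estimate is false as stated. You claim
\[
\|F(\varphi)-F(\psi)\|_{E_0(J)} \;\le\; C\,s\,\bigl(r + \|x\|_{c^{k,\alpha}} + T^{1/2}\bigr)\,\|\varphi-\psi\|_{E_1(J)},
\]
i.e.\ that the entire Lipschitz constant of $F$ carries a uniform factor $s$, because ``the fourth-order principal term has already been subtracted.'' That reasoning only covers the fourth-order part. The twisted term $-(1-s)\operatorname{tr}_{\omega_\varphi}\chi$ in $R^s$ is second order in $\varphi$ and comes with coefficient $(1-s)$, not $s$. Its contribution to $F(\varphi)-F(\psi)$ is, at leading order, $-(1-s)(g^{i\bar j}_\varphi-g^{i\bar j}_\psi)\chi_{i\bar j}$, whose $E_0(J)$-norm is $\sim (1-s)\,T^{1/2}\,\|\varphi-\psi\|_{C(J,E_{1/2})}$, which for $s$ small is $O(T^{1/2})$, not $O(sT^{1/2})$. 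The correct estimate (cf.\ the paper's \eqref{fu-fv:estimate}) is
\[
\|F(\varphi)-F(\psi)\|_{E_0(J)} \;\le\; C\Bigl(s(\epsilon_0 + T^{1/4}) + T^{1/2}\Bigr)\|\varphi-\psi\|_{E_1(J)}
\]
(after using \eqref{var1-var2} to absorb $\|\cdot\|_{C(J,E_{1/2})}$). Consequently, after multiplying by the $s^{-1}$ from maximal regularity, the contraction constant is $C(\epsilon_0 + T^{1/4} + T^{1/2}/s)$ rather than your uniform-in-$s$ bound. This means the contraction argument itself, not merely the affine bound on $\Phi(0)$, forces $T\lesssim s^2$. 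Your diagnosis that the constant term $F(0)$ is ``the only contribution'' where the $s^{-1}$ is not neutralized is therefore wrong: the $J$-flow part of the nonlinearity contributes equally. You land on the correct value $T=C(\omega,\chi)s^2$, but for an incomplete reason, and the nonlinear estimate you would have to prove along the way does not hold. To fix this, split the lower-order part of $F$ into an $s$-weighted piece (from $sR_\varphi$) and a $(1-s)$-weighted piece (from $\operatorname{tr}_\varphi\chi$), and track them separately as the paper does in \eqref{estimateofF0}--\eqref{fu-fv:estimate}.
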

\begin{rem}
Taking $k=2$ in \cref{thm:short-time-existence} implies \cref{t1.7}. Here we prove a more general result for initial value in $c^{k,\alpha}(M)$ with different $k$. \eqref{eq:stability2} indicates that the higher the regularity of the initial value, the higher the regularity of the solution.
The estimate \eqref{eq:stability2} plays an important role in the proof of \cref{thm:stability}.
\end{rem}
\subsection{Proof strategy and technical setup}

The proof employs a fixed point argument in carefully chosen function spaces. We reformulate the twisted Calabi flow as a quasi-linear parabolic equation.

Let
\[
F(\varphi) = s\Delta^2\varphi + (R^s - \underline{R}^s) = s\Delta^2\varphi + sR_\varphi - (1-s)\mathrm{tr}_\varphi\chi - \underline{R}^s,
\]  where $\Delta$ denotes the Laplacian operator with respect to the metric $\omega$.
We consider the equivalent equation:
\[
\begin{cases}
	\partial_t \phi + s\Delta^2 \phi = F(\varphi), \\
	u(0) = x.
\end{cases}
\]
Via the rescaling $\tau = ts$, $v(\tau) = \phi(t)$, we obtain the normalized equation:
\begin{equation}
	\label{eq:rescaled-flow}
	\begin{cases}
		\partial_\tau v + \Delta^2 v = \frac{1}{s}F(\varphi), \\
		v(0) = x.
	\end{cases}
\end{equation}
Assume $\epsilon>0$ is small enough. For any $x\in B_{E_{1/2}}(0,\epsilon)$, we define the admissible set:
\begin{align*}
	V_x(J) = \{&v \in E_1(J) : v(0) = x,\ |v|_{C(J,E_{1/2})} \leq \epsilon_0\} \cap B_{E_1(J)}(0, \epsilon_0),
\end{align*}
where $\epsilon_0 = c_0\epsilon$ for a constant $c_0$ depending only on $\omega$. Explicitly, any $v \in V_x(J)$ satisfies $v(0) = x$ and:
\begin{align}
	\sup_{t \in [0,T]} |v(t)|_{c^{k,\alpha}(M)} &\leq \epsilon_0, \label{eq:cond1} \\
	\sup_{t \in (0,T]} t^{1/2}(|v(t)|_{c^{k+2,\alpha}(M)} + |v'(t)|_{c^{k-2,\alpha}(M)}) &\leq \epsilon_0. \label{eq:cond2}
\end{align}

\begin{lemma}[Non-emptiness of admissible set]\label{lem:nonempty}
	For sufficiently small $T > 0$, the set $V_x(J)$ is non-empty.
\end{lemma}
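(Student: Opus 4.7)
The plan is to construct $v \in V_x(J)$ explicitly by solving the homogeneous linearization
\begin{equation*}
    \partial_t v + \Delta^2 v = 0, \quad v(0) = x,
\end{equation*}
i.e.\ taking $v(t) = T(t)x$, where $\{T(t)\}_{t \ge 0}$ is the analytic semigroup on $E_0 = c^{k-2,\alpha}(M)$ generated by $-\Delta^2$ with respect to the background metric $\omega$. Since $\Delta^2$ is a uniformly strongly elliptic operator of order four on the compact manifold $M$, its realization in the little H\"older scale is sectorial, and standard analytic semigroup theory (e.g.\ Lunardi's monograph on optimal parabolic regularity) yields constants $M = M(\omega) > 0$ and $T_0 = T_0(\omega) > 0$ such that, for all $t \in (0, T_0]$,
\begin{align*}
    \|T(t)x\|_{E_{1/2}} &\le M\,\|x\|_{E_{1/2}}, \\
    t^{1/2}\|T(t)x\|_{E_1} + t^{1/2}\|\Delta^2 T(t)x\|_{E_0} &\le M\,\|x\|_{E_{1/2}}.
\end{align*}

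Using these bounds I would verify the three defining properties of $V_x(J)$. Since $\|x\|_{E_{1/2}} \le \epsilon$, fixing $c_0 := 2M$ (so that $\epsilon_0 = 2M\epsilon$) forces both \eqref{eq:cond1} and \eqref{eq:cond2} on any subinterval $[0,T] \subset [0,T_0]$, with the latter coming from $v'(t) = -\Delta^2 T(t)x$. The initial condition $v(0)=x$ follows from strong continuity of $T(\cdot)$ on $E_{1/2}$ at $t=0$, which holds precisely because $x$ lies in the \emph{little} H\"older space, namely in the closure of $C^\infty(M)$ inside $C^{k,\alpha}(M)$.

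The only subtle point is the decay requirement $\lim_{t \to 0} t^{1/2}\|v(t)\|_{E_1} = 0$ and $\lim_{t \to 0} t^{1/2}\|v'(t)\|_{E_0} = 0$ embedded in the definition of $C_{1/2}(J,E)$. This is exactly why one works on the little H\"older scale rather than on $C^{k,\alpha}$: for $x \in C^\infty(M)$ the quantities $\|T(t)x\|_{E_1}$ and $\|\Delta^2 T(t)x\|_{E_0}$ remain uniformly bounded as $t \to 0$, so the prefactor $t^{1/2}$ forces the limits to vanish; for general $x \in c^{k,\alpha}(M)$, a density argument, approximating $x$ by smooth $x_n$ and propagating the vanishing limit through the uniform semigroup bound $M$, extends this conclusion. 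This yields $v = T(\cdot)x \in V_x(J)$ for every $T \le T_0$.

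The main obstacle here is expository rather than technical: it is packaging the classical parabolic maximal-regularity theory for sectorial operators into the precise statements above. Crucially, none of the twisted geometric data ($\chi$, $s$, or the nonlinear content of $F$) enters this construction at all, since we use only the homogeneous linear equation whose principal part $\Delta^2$ is determined by $\omega$ alone; this is precisely what guarantees that the constants $M$ and $T_0$ depend only on $\omega$, as needed in the ambient setup of \cref{thm:short-time-existence}.
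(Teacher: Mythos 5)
Your construction is correct and coincides with the paper's own: both take the candidate to be $v(t)=e^{-t\Delta^2}x$, the solution of the biharmonic heat equation, and invoke standard analytic-semigroup facts on the little H\"older/continuous-interpolation scale (the paper cites Cl\'ement--Simonett; you cite Lunardi) to verify the $E_1(J)$ bound, the $C(J,E_{1/2})$ bound, and the vanishing of $t^{1/2}\|v(t)\|_{E_1}$ as $t\to 0$. The only difference worth noting is that you control $\|e^{-t\Delta^2}x\|_{E_{1/2}}$ via the uniform bound $\|e^{-t\Delta^2}\|_{\mathcal L(E_{1/2})}\le M$, whereas the paper invokes strong continuity to bound $\|e^{-t\Delta^2}x-x\|_{E_{1/2}}$; your route is the more robust one for getting a $T$ uniform over the ball $\|x\|_{E_{1/2}}\le\epsilon$.
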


\begin{proof}
	Consider the equation for some $x\in E_0$
	\begin{align}
		\label{equ2.1}
		\begin{cases}
			\frac{\partial u}{\partial t} = -\Delta^2u;\\
			u(0)=x
		\end{cases}
	\end{align}
	Then $\Delta^2:E_1\to E_0$ induces an analytic semigroup $e^{-t\Delta^2}:E_0\to E_1$. The solution of \eqref{equ2.1} is given by $u(t) = e^{-t\Delta^2}x$ for $t\in J$.
	According to \cite[Lemma 2.1, Lemma 2.2]{MR1838320}, we have the following three facts.
	\begin{enumerate}
		\item For $x\in E_{1/2}$, we have the equivalent norm
		\begin{align*}
			|x|_{E_{1/2}}:=\sup_{t\in(0,T]}t^{1/2}|\Delta^2(e^{-t\Delta^2}x)|_{E_0 };
		\end{align*}
		\item $[t\to e^{-t\Delta^2}x]\in E_1(J)$ and there exists a constant $c_1>0$ independent of $J$ such that for any $t\in J$,
		\begin{align}\label{equa:3.3}
			|e^{-t\Delta^2}x|_{E_1(J)}\leq c_1|x|_{E_{1/2}};
		\end{align}

		\item If $u\in E_1(J)$ with $u(0)=0$, then there exists a constant $c_2$ independent of $J$ such that
		\begin{align}\label{eq:3.4}
			|u|_{C(J,E_{1/2})}\leq c_2|u|_{E_1(J)}.
		\end{align}
	\end{enumerate}
	The constant $c_1, c_2$ only depend on $ \omega $. Also if $T$ is small, by the strong continuity of the semigroup $\{e^{-t\Delta^2}, t\geq 0\}$, we can get that
	\begin{align}\label{equa:3.4}
		|e^{-t\Delta^2}x-x|_{E_{1/2}}\leq \frac{1}{4}\epsilon_0.
	\end{align}
	Let $c_0=2c_1+2$ and $\epsilon_0=c_0\epsilon$. According to \eqref{equa:3.3}, for any $x\in B_{E_{1/2}}(0,\epsilon)$ we have
	\begin{align*}
		|e^{-t\Delta^2}x |_{E_{1}(J)}\leq c_1\epsilon<\frac{\epsilon_0}{2}.
	\end{align*}
	Also by \eqref{equa:3.4} we obtain
	\begin{align*}
		|e^{-t\Delta^2}x |_{E_{1/2}}\leq |x|_{E_{1/2}}+|e^{-t\Delta^2}x-x|_{E_{1/2}}<\frac{\epsilon_0}{2}+\frac{\epsilon_0}{4}<\epsilon_0
	\end{align*}
	provided $T$ is small. It follows that $V_x(J)$ is not empty.
\end{proof}

\subsection{Fixed point argument}

Define the solution map $\Pi: V_x(J) \to V_x(J)$ by $\Pi(\varphi) = v$, where $v$ solves:
\begin{align}\label{tcfdef}
	\begin{cases}
		\partial_\tau v + \Delta^2 v = \frac{1}{s}F(\varphi), \\
		v(0) = x.
	\end{cases}
\end{align}
The solution is given by:
\[
v(t) = e^{-t\Delta^2}x + \frac{1}{s}K(F)(t), \quad \text{where } K(F)(t) = \int_0^t e^{-(t-\tau)\Delta^2}F(\varphi(\tau))d\tau.
\]

\begin{lemma}[Well-definedness of solution map]\label{lem:solution-map}
	For sufficiently small $T > 0$ and $\epsilon > 0$, the map $\Pi: V_x(J) \to V_x(J)$ defined by $\Pi(\varphi) = v$, where $v$ solves
	\[
	\begin{cases}
		\partial_\tau v + \Delta^2 v = \frac{1}{s}F(\varphi), \\
		v(0) = x,
	\end{cases}
	\]
	is well-defined. That is, for any $\varphi \in V_x(J)$, we have $\Pi(\varphi) \in V_x(J)$.
\end{lemma}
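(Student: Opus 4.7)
The plan is to verify directly the three defining conditions of $V_x(J)$ for $v=\Pi(\varphi)$: the initial condition $v(0)=x$, the ball bound $|v|_{E_1(J)}\leq \epsilon_0$, and the smallness $|v|_{C(J,E_{1/2})}\leq \epsilon_0$. The first is immediate from the variation-of-constants representation $v(t)=e^{-t\Delta^2}x+\frac{1}{s}K(F(\varphi))(t)$, and the other two will be obtained by estimating the homogeneous and the inhomogeneous pieces separately.

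For the homogeneous part, the estimates \eqref{equa:3.3} and \eqref{equa:3.4} recalled in the proof of \cref{lem:nonempty} already give $|e^{-t\Delta^2}x|_{E_1(J)}\leq c_1\epsilon$ and make $e^{-t\Delta^2}x$ arbitrarily close to $x$ in $C(J,E_{1/2})$ after shrinking $T$. For the inhomogeneous part the principal analytic tool is the maximal regularity estimate for the biharmonic semigroup on the little-H\"older scale: there exists a constant $c_3=c_3(\omega)$, independent of $J$, such that $|K(G)|_{E_1(J)}\leq c_3|G|_{E_0(J)}$ for every $G\in E_0(J)$, together with the companion bound $|K(G)|_{C(J,E_{1/2})}\leq c_2 c_3|G|_{E_0(J)}$ coming from \eqref{eq:3.4}. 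The whole problem is thus reduced to estimating $|F(\varphi)|_{E_0(J)}$ in terms of the size of $\varphi$ in $V_x(J)$.

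To obtain such an estimate, I will exploit the quasilinear structure of $F$. Writing the scalar curvature in the form $R_\varphi=-g_\varphi^{i\bar k}g_\varphi^{j\bar l}\varphi_{i\bar j k\bar l}+(\text{terms of order}\leq 3\text{ in }\varphi\text{ with coefficients smooth in }\partial\bar\partial\varphi)$, the leading fourth-order contribution in $s\Delta^2\varphi+sR_\varphi$ collapses to $s\bigl[g^{i\bar k}g^{j\bar l}-g_\varphi^{i\bar k}g_\varphi^{j\bar l}\bigr]\varphi_{i\bar j k\bar l}$, whose coefficient vanishes at $\varphi=0$ and is pointwise $O(|\varphi|_{C^2})$. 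Combined with the standard boundedness of $\tr_\varphi\chi$ in $c^{k-2,\alpha}(M)$ when $|\varphi|_{c^{k,\alpha}}\leq \epsilon_0$, the multiplicative H\"older estimate produces
\[
|F(\varphi(t))|_{c^{k-2,\alpha}} \leq C(\omega,\chi)\bigl(1+s\,\epsilon_0\,|\varphi(t)|_{c^{k+2,\alpha}}\bigr).
\]
Multiplying by $t^{1/2}$ and using $t^{1/2}|\varphi(t)|_{c^{k+2,\alpha}}\leq |\varphi|_{E_1(J)}\leq \epsilon_0$ then yields $|F(\varphi)|_{E_0(J)}\leq C(\omega,\chi)(T^{1/2}+s\,\epsilon_0^{\,2})$.

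Assembling the pieces,
\[
|v|_{E_1(J)} \leq c_1\epsilon+\frac{c_3}{s}|F(\varphi)|_{E_0(J)} \leq c_1\epsilon+c_3 C(\omega,\chi)\Bigl(\frac{T^{1/2}}{s}+\epsilon_0\Bigr),
\]
and a parallel inequality controls $|v|_{C(J,E_{1/2})}$ after adding the homogeneous contribution from \eqref{equa:3.4}. Choosing $\epsilon_0=c_0\epsilon$ and then $T=C(\omega,\chi)s^2$ with both constants small enough drives both right-hand sides below $\epsilon_0$, completing the verification. The scaling $T\sim s^2$ is forced precisely here: the factor $\frac{1}{s}$ in front of $F(\varphi)$ blows up as $s\to 0$, and it is neutralized only by the $T^{1/2}$ smoothing gained from the $t^{1/2}$-weighted $E_0(J)$ norm, which is why the choice $T=C(\omega,\chi)s^2$ appears in the statement of \cref{thm:short-time-existence}. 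Tracking this $s$-dependence carefully is the main technical obstacle; the rest follows the standard little-H\"older semigroup scheme.
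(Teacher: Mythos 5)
Your overall architecture matches the paper's proof almost exactly: the variation-of-constants split $v = e^{-t\Delta^2}x + \frac{1}{s}K(F(\varphi))$, the maximal-regularity bound $|K(G)|_{E_1(J)}\leq c|G|_{E_0(J)}$, the appeal to \eqref{equa:3.3}--\eqref{equa:3.4} for the homogeneous piece, and the correct identification of $T\sim s^2$ as the scaling forced by the $1/s$ prefactor fighting the $T^{1/2}$ smoothing. All of that is the right plan.

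There is, however, a concrete gap in the $F$-estimate. When you decompose $s\Delta^2\varphi + sR_\varphi$, you account for the leading fourth-order part (coefficient $O(|\varphi|_{C^2})$, giving $s\epsilon_0|\varphi|_{c^{k+2,\alpha}}$) and treat the rest as absorbable into a constant because $\mathrm{tr}_\varphi\chi$ is bounded. But the remainder also contains genuine third-order pieces, e.g. $sg^{i\bar j}\partial_i(g^{k\bar l})\varphi_{\bar j k\bar l}$ from $s\Delta^2\varphi$ and $sg^{i\bar j}_\varphi g^{k\bar q}_\varphi g^{p\bar l}_\varphi(\partial_i g_{p\bar q})\varphi_{\bar j k\bar l}$ plus the quadratic $\varphi_{ip\bar q}\varphi_{\bar j k\bar l}$ from $sR_\varphi$. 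In $c^{k-2,\alpha}$ these contribute terms of size $s|\varphi|_{c^{k+1,\alpha}}$ and $s|\varphi|_{c^{k+1,\alpha}}^2$. Neither is bounded by a constant when you only know $|\varphi(t)|_{c^{k,\alpha}}\leq\epsilon_0$; indeed $|\varphi(t)|_{c^{k+1,\alpha}}$ can blow up like $t^{-1/4}$ as $t\to 0$ inside $V_x(J)$. Consequently your intermediate claim
\[
|F(\varphi(t))|_{c^{k-2,\alpha}}\leq C(\omega,\chi)\bigl(1+s\,\epsilon_0\,|\varphi(t)|_{c^{k+2,\alpha}}\bigr)
\]
is false as written. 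To repair it you need the convexity interpolation $|\varphi|^2_{c^{k+1,\alpha}}\leq c_3\,|\varphi|_{c^{k,\alpha}}|\varphi|_{c^{k+2,\alpha}}$, exactly as the paper invokes. The quadratic term then absorbs into your $s\epsilon_0|\varphi|_{c^{k+2,\alpha}}$, but the \emph{linear} term $s|\varphi|_{c^{k+1,\alpha}}$ becomes $s(c_3\epsilon_0)^{1/2}|\varphi|_{c^{k+2,\alpha}}^{1/2}$ and, after multiplying by $t^{1/2}$, contributes a $s\epsilon_0 T^{1/4}$ term to $|F|_{E_0(J)}$ — precisely the third summand in the paper's estimate \eqref{F:E0norm} that your final bound $C(T^{1/2}+s\epsilon_0^2)$ is missing. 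The omission is harmless for the ultimate conclusion (with $T\sim s^2$ the extra term is dominated by the others once $\epsilon_0$ is small), but the derivation as you wrote it is not valid. Inserting the interpolation step and tracking the $T^{1/4}$ contribution closes the gap and reproduces the paper's argument.
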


\begin{proof}
	We need to verify that for $\varphi \in V_x(J)$, the solution $v = \Pi(\varphi)$ satisfies the conditions \eqref{eq:cond1} and \eqref{eq:cond2} defining $V_x(J)$.

	\emph{Step 1: Expression for the solution.}

	The solution is given by:
	\[
	v(t) = e^{-t\Delta^2}x + \frac{1}{s}K(F)(t), \quad \text{where } K(F)(t) = \int_0^t e^{-(t-\tau)\Delta^2}F(\varphi(\tau))d\tau.
	\]
	The analytic semi-group theorem implies that $v\in C((0,T), C^\infty(M))$ for any $T>0$.
	Recall that:
	\[
	F(\varphi) = s\Delta^2\varphi + R^s(\omega_\varphi) - \underline{R}^s.
	\]
	We analyze the twisted scalar curvature term in local coordinates. The scalar curvature $R_\varphi$ of $\omega_\varphi$ satisfies:
	\[
	R_\varphi = -g^{i\bar{j}}_\varphi \partial_i\partial_{\bar{j}}\log\det(g_{k\bar{l}} + \varphi_{k\bar{l}}).
	\]
	Expanding this expression:
	\begin{align*}
		R_\varphi = &-g^{i\bar{j}}_\varphi g^{k\bar{l}}_\varphi (\partial_i\partial_{\bar{j}}g_{k\bar{l}} + \varphi_{i\bar{j}k\bar{l}}) \\
		&+ g^{i\bar{j}}_\varphi g^{k\bar{q}}_\varphi g^{p\bar{l}}_\varphi (\partial_i g_{p\bar{q}} + \varphi_{ip\bar{q}})(\partial_{\bar{j}}g_{k\bar{l}} + \varphi_{\bar{j}k\bar{l}}).
	\end{align*}

	Similarly, the trace term expands as:
	\[
	\mathrm{tr}_\varphi\chi = g^{i\bar{j}}_\varphi \chi_{i\bar{j}}.
	\]
	The bi-Lapalcian expands as
	\begin{align*}
		\Delta^2\varphi=&g^{i\bar j}\partial_i\partial_{\bar j}(g^{k\bar l}\varphi_{k\bar l})\\
		=&g^{i\bar j} g^{k\bar l}\varphi_{i\bar jk\bar l} +g^{i\bar j}\partial_i  (g^{k\bar l})\varphi_{\bar jk\bar l}
		+g^{i\bar j}\partial_{\bar j} (g^{k\bar l})\varphi_{ik\bar l}+
		g^{i\bar j}\partial_i\partial_{\bar j}(g^{k\bar l})\varphi_{k\bar l}.
	\end{align*}
	Therefore,
	\begin{align}\label{express:F}
		\begin{split}
			F(\varphi) = &s\left[g^{i\bar{j}}g^{k\bar{l}} - g^{i\bar{j}}_\varphi g^{k\bar{l}}_\varphi\right]\varphi_{i\bar{j}k\bar{l}}
			+s g^{i\bar j}\left(\partial_i  (g^{k\bar l})\varphi_{\bar jk\bar l}
			+ \partial_{\bar j} (g^{k\bar l})\varphi_{ik\bar l}\right)
			\\
			&
			+ s g^{i\bar{j}}_\varphi g^{k\bar{q}}_\varphi g^{p\bar{l}}_\varphi(\partial_i g_{p\bar{q}} + \varphi_{ip\bar{q}})(\partial_{\bar{j}}g_{k\bar{l}} + \varphi_{\bar{j}k\bar{l}})   +
			sg^{i\bar j}\partial_i\partial_{\bar j}(g^{k\bar l})\varphi_{k\bar l} \\
			&- (1-s)\left[g^{i\bar{j}}_\varphi - g^{i\bar{j}}\right]\chi_{i\bar{j}} - (1-s)g^{i\bar{j}}\chi_{i\bar{j}} - \underline{R}^s.
		\end{split}
	\end{align}

	\emph{Step 2: Estimates for $F(\varphi)$.}

	Using the matrix identity:
	\[
	g^{i\bar{j}}_\varphi - g^{i\bar{j}} = -g^{i\bar{l}}\varphi_{k\bar{l}}g^{k\bar{j}}_\varphi,
	\]
	we obtain:
	\begin{align*}
		&g^{i\bar{j}}g^{k\bar{l}} - g^{i\bar{j}}_\varphi g^{k\bar{l}}_\varphi \\
		&= g^{i\bar{j}}(g^{k\bar{l}}-g^{k\bar{l}}_\varphi) +(g^{i\bar{j}}-g^{i\bar{j}}_\varphi)g^{k\bar{l}}_{\varphi}  \\
		&= g^{i\bar{j}}g^{k\bar{s}}\varphi_{r\bar{s}}g^{r\bar{l}}_\varphi + g^{i\bar{q}}\varphi_{p\bar{q}}g^{p\bar{j}}_\varphi g^{k\bar{l}}_\varphi .
	\end{align*}
	Thus the principal part becomes:
	\[
	s\left[g^{i\bar{j}}g^{k\bar{l}} - g^{i\bar{j}}_\varphi g^{k\bar{l}}_\varphi\right]\varphi_{i\bar{j}k\bar{l}} = s\left[g^{i\bar{j}}g^{k\bar{s}}\varphi_{r\bar{s}}g^{r\bar{l}}_\varphi + g^{i\bar{q}}\varphi_{p\bar{q}}g^{p\bar{j}}_\varphi g^{k\bar{l}}_\varphi\right]\varphi_{i\bar{j}k\bar{l}}  .
	\]
	From the above expansion, we obtain the pointwise estimate:
	\begin{align*}
		|F(\varphi)|_{c^{k-2,\alpha}(M)} \leq &C_1\left[s|\varphi|_{c^{k,\alpha}}|\varphi|_{c^{k+2,\alpha}} + s(|\varphi|_{c^{k+1,\alpha}}+|\varphi|_{c^{k+1,\alpha}}^2+|\varphi|_{c^{k,\alpha}}) \right. \\
		&\left.   + (1-s)|\varphi|_{c^{k,\alpha}} + 1\right],
	\end{align*}
	where $C_1 = C_1(\chi,\omega) > 0$ depends on the background metric and twisting form.

	\emph{Step 3: Verification of $v \in V_x(J)$.}

	For $\varphi \in V_x(J)$, we have the bounds(see \eqref{eq:cond1}, \eqref{eq:cond2}):
	\begin{align*}
		|\varphi(t)|_{c^{k,\alpha}} \leq \epsilon_0, \quad
		t^{1/2}|\varphi(t)|_{c^{k+2,\alpha}} \leq \epsilon_0.
	\end{align*}
	Combining the interpolation inequality
	\begin{align}\label{holderinterpolation}
		|\varphi|^2_{c^{k+1,\alpha}} \leq c_3|\varphi|_{c^{k+2,\alpha}}|\varphi|_{c^{k,\alpha}},
	\end{align} where $c_3$ depends only on $ \omega $, we have
	\begin{align}\label{F:E0norm}
		\begin{split}
			t^{1/2}|F(\varphi(t))|_{c^{k-2,\alpha}} \leq &C_2t^{1/2}\left[s|\varphi|_{c^{k,\alpha}}|\varphi|_{c^{k+2,\alpha}} +  s|\varphi|^{1/2}_{c^{k+2,\alpha}}|\varphi|^{1/2}_{c^{k,\alpha}}+|\varphi|_{c^{k,\alpha}}+1  \right]\\
			\leq &C_2\left[s\epsilon_0^2 +s\epsilon_0 T^{1/4} + (  1 + \epsilon_0)  T^{1/2}\right]
			,
		\end{split}
	\end{align}
	where $C_2$ depends on $ \omega$ and $\chi$.
	Since $\epsilon_0 = 2(c_1+1)\epsilon$ and we can choose $\epsilon$, $T$ small, we conclude $F(\varphi) \in E_0(J)$.

	The linear operator $K: E_0(J) \to E_1(J)$ satisfies (see \cite{MR1838320}):
	\[
	|K(F)|_{E_1(J)} \leq c_4|F|_{E_0(J)},
	\]
	where $c_4$ depends only on $ \omega $.
	For the semigroup part, from \eqref{equa:3.3} in  \cref{lem:nonempty} we have:
	\[
	|e^{-t\Delta^2}x|_{E_1(J)} \leq c_1|x|_{E_{1/2}} \leq c_1\epsilon < \frac{\epsilon_0}{2}.
	\]
	Therefore:
	\[
	|v|_{E_1(J)} \leq |e^{-t\Delta^2}x|_{E_1(J)} + \frac{1}{s}|K(F)|_{E_1(J)} \leq \frac{\epsilon_0}{2} + \frac{c_4}{s}|F|_{E_0(J)}.
	\]
	According to \eqref{F:E0norm}, choosing $\epsilon_0<1$ small enough such that
	\begin{align*}
		c_4 C_2\epsilon_0<\frac{1}{8}, \quad \text{i,e,} \quad \epsilon<\frac{1}{16c_4C_2(1+c_1)}
	\end{align*}
	and $T=C_3\epsilon_0^2s^2$ with $C_3$ satisfying
	\[
	c_4 C_2\left[   C_3^{1/4} + 2C_3^{1/2}\right]\leq \frac{1}{8},
	\]
	% sufficiently small such that:
	%	\[
	%	\frac{c_3}{s} C_1\left[s\epsilon_0^2 +s\epsilon_0 T^{1/4} + (  1 + \epsilon_0)  T^{1/2}\right]\leq \frac{\epsilon_0}{4},
	%	\]
	we obtain
	\begin{align}\label{FE0estimate}
		\frac{c_4}{s}|F|_{E_0(J)}<\frac{\epsilon_0}{4}.
	\end{align}
	It follows	$|v|_{E_1(J)} < \epsilon_0$.

	For the $C([0,T], E_{1/2})$ bound, we use the embedding estimate:
	\[
	|v|_{C(J,E_{1/2})} \leq |e^{-t\Delta^2}x|_{C(J,E_{1/2})} + \frac{1}{s}|K(F)|_{C(J,E_{1/2})}.
	\]
	From the properties of analytic semigroups \eqref{equa:3.4},
	\[
	|e^{-t\Delta^2}x|_{C(J,E_{1/2})} \leq |x|_{E_{1/2}} + \frac{\epsilon_0}{4} \leq \epsilon + \frac{\epsilon_0}{4}<\frac{3\epsilon_0}{4},
	\]
	and
	\[
	|K(F)|_{C(J,E_{1/2})} \leq c_4|F|_{E_0(J)},
	\]
	we have
	\[
	|v|_{C(J,E_{1/2})} \leq \frac{3\epsilon_0}{4} + \frac{c_4}{s}|F|_{E_0(J)}.
	\]
	According to \eqref{FE0estimate}:
	\[
	\frac{3\epsilon_0}{4} + \frac{c_4}{s}|F|_{E_0(J)} <\epsilon_0,
	\]
	we obtain the desired bound. 	Therefore, $v \in V_x(J)$, completing the proof.
\end{proof}

\begin{lemma}[Contraction property]\label{lem:contraction}
	For sufficiently small $T$ and $\epsilon$, the map $\Pi$ is a contraction on $V_x(J)$.
\end{lemma}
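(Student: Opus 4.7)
The plan is to observe that the difference $w = \Pi(\varphi_1) - \Pi(\varphi_2)$ solves a linear inhomogeneous bi-Laplacian equation with vanishing initial data and source $\tfrac{1}{s}[F(\varphi_1)-F(\varphi_2)]$. By the mapping property of the Duhamel operator $K: E_0(J) \to E_1(J)$ already used in \cref{lem:solution-map}, we get the control
\[
|\Pi(\varphi_1)-\Pi(\varphi_2)|_{E_1(J)} \;\leq\; \tfrac{c_4}{s}\,|F(\varphi_1)-F(\varphi_2)|_{E_0(J)} .
\]
The whole task is therefore to obtain a Lipschitz estimate for the nonlinear functional $F$ on bounded subsets of $V_x(J)$ that is small enough to beat the factor $\tfrac{1}{s}$.

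Next I would expand $F(\varphi_1)-F(\varphi_2)$ term by term using the explicit formula \eqref{express:F}. Every summand is a polynomial in the entries of $g^{i\bar j}_\varphi$, in $\chi_{i\bar j}$ and in covariant derivatives of $\varphi$ of order at most $4$. The key algebraic tool is the identity
\[
g^{i\bar j}_{\varphi_1} - g^{i\bar j}_{\varphi_2} \;=\; -\,g^{i\bar p}_{\varphi_1}(\varphi_1-\varphi_2)_{p\bar q}\,g^{q\bar j}_{\varphi_2},
\]
which, combined with a telescoping of products, yields the pointwise bound
\begin{align*}
|F(\varphi_1)-F(\varphi_2)|_{c^{k-2,\alpha}}
\;\leq\;&\, sC\Bigl[(|\varphi_1|_{c^{k+2,\alpha}}+|\varphi_2|_{c^{k+2,\alpha}})\,|\varphi_1-\varphi_2|_{c^{k,\alpha}}\\
&\,+(|\varphi_1|_{c^{k,\alpha}}+|\varphi_2|_{c^{k,\alpha}})\,|\varphi_1-\varphi_2|_{c^{k+2,\alpha}}\Bigr]\\
&\,+ sC\bigl(1+|\varphi_1|_{c^{k+1,\alpha}}+|\varphi_2|_{c^{k+1,\alpha}}\bigr)\,|\varphi_1-\varphi_2|_{c^{k+1,\alpha}}\\
&\,+ (1-s)\,C\,|\varphi_1-\varphi_2|_{c^{k,\alpha}},
\end{align*}
with $C=C(\omega,\chi)$. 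The interpolation inequality \eqref{holderinterpolation} then converts the $c^{k+1,\alpha}$ terms into the pair $(c^{k,\alpha},c^{k+2,\alpha})$, so the whole right-hand side is controlled by the quartic/quadratic piece up to harmless perturbations.

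Now I would weight by $t^{1/2}$ and take the supremum over $J$. For any $\varphi\in V_x(J)$ the conditions \eqref{eq:cond1}–\eqref{eq:cond2} give $|\varphi(t)|_{c^{k,\alpha}}\leq \epsilon_0$ and $t^{1/2}|\varphi(t)|_{c^{k+2,\alpha}}\leq \epsilon_0$, whereas by \eqref{eq:3.4} one has $|\varphi_1-\varphi_2|_{C(J,E_{1/2})} \leq c_2\,|\varphi_1-\varphi_2|_{E_1(J)}$. Splitting $t^{1/2}$ suitably between the two factors in each quartic term, one obtains
\[
|F(\varphi_1)-F(\varphi_2)|_{E_0(J)} \;\leq\; \bigl[sC'\epsilon_0 + (1-s)\,C'\,T^{1/2}\bigr]\,|\varphi_1-\varphi_2|_{E_1(J)} .
\]
Dividing by $s$ and recalling the choice $T = C_3\,s^2\epsilon_0^2$ from \cref{lem:solution-map}, the factor $\tfrac{1}{s}\cdot(1-s)T^{1/2}\leq C_3^{1/2}\epsilon_0$ stays uniformly small, and the whole constant in front of $|\varphi_1-\varphi_2|_{E_1(J)}$ is of order $\epsilon_0$. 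Choosing $\epsilon$ (hence $\epsilon_0$) small enough ensures
\[
|\Pi(\varphi_1)-\Pi(\varphi_2)|_{E_1(J)} \;\leq\; \tfrac{1}{2}\,|\varphi_1-\varphi_2|_{E_1(J)},
\]
which is the desired contraction.

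The main obstacle is the appearance of the dangerous prefactor $\tfrac{1}{s}$ in front of $F(\varphi_1)-F(\varphi_2)$: any $(1-s)$-term in $F$ that is \emph{not} accompanied by a four-derivative factor would a priori produce $\tfrac{1-s}{s}$ on the right-hand side. The key observation making the argument work is that \emph{every} $(1-s)$ contribution in \eqref{express:F} is lower order (involves at most second derivatives of $\varphi$), so after weighting by $t^{1/2}$ it yields a $T^{1/2}$ gain, and the calibration $T\sim s^2\epsilon_0^2$ then absorbs $\tfrac{1}{s}$ into a constant times $\epsilon_0$. Once this is verified, Banach's fixed point theorem on the closed set $V_x(J)\subset E_1(J)$ produces the unique fixed point of $\Pi$ and completes the proof.
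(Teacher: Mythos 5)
Your proposal is correct and follows essentially the same route as the paper: expand $F(\varphi_1)-F(\varphi_2)$ via the resolvent identity $g^{i\bar j}_{\varphi_1}-g^{i\bar j}_{\varphi_2}=-g^{i\bar p}_{\varphi_1}(\varphi_1-\varphi_2)_{p\bar q}g^{q\bar j}_{\varphi_2}$, interpolate to absorb $c^{k+1,\alpha}$ terms, weight by $t^{1/2}$, and calibrate $T\sim s^2\epsilon_0^2$ so that the $1/s$ prefactor is absorbed. You also correctly identify the structural point that saves the argument: the $(1-s)$ contributions in $F$ are at most second order in $\varphi$, so they pick up a $T^{1/2}$ gain. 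The only real difference is one of scope: you fix the same initial datum $x$ for both inputs, so the semigroup term $e^{-t\Delta^2}x$ cancels exactly and you get a clean contraction estimate. The paper instead runs the entire computation with two possibly different initial data $x,y$; this costs an extra $|e^{-t\Delta^2}(x-y)|$ bookkeeping via \eqref{equa:3.3}, but the payoff is that the same inequality \eqref{equa:23} simultaneously yields the contraction (setting $y=x$) and the Lipschitz continuity of the solution in the initial datum \eqref{E1normcont}--\eqref{eq:stability2}, which is needed later for the stability theorem. Your argument proves the contraction lemma as literally stated, but if you adopt it you would need a separate (very similar) computation to recover the continuous-dependence estimates.
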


\begin{proof}
	We establish the contraction estimate through careful analysis of the nonlinear differences.	For $x,y\in B_{E_{1/2}}(0,\epsilon)$, let  $v_1 \in V_{x}(J),v_2\in V_{y}(J)$ to \eqref{tcfdef} with initial data $x, y$ and $F(\varphi), F(\psi)$. It is clear that
	\begin{align}\label{eq23}
		|v_1-v_2|_{E_1(J)}\leq | e^{-t\Delta^2}x- e^{-t\Delta^2}y|_{E_1(J)}+\frac{1}{s} |K(F(\varphi)-F(\psi)) |_{E_1(J)}.
	\end{align}
	According to \eqref{express:F}, we have
	\begin{align*}
		F(\varphi) - F(\psi) = &sg^{i\bar j} g^{k\bar l} (\varphi_{i\bar jk\bar l}-\psi_{i\bar jk\bar l})
		-s( g^{i\bar j}_{\varphi} g^{k\bar l}_{\varphi}  \varphi_{i\bar jk\bar l}
		-
		g^{i\bar j}_{\psi} g^{k\bar l}_{\psi}  \psi_{i\bar jk\bar l})
		\\
		&
		+sg^{i\bar j} (\partial_ig^{k\bar l}) (\varphi_{\bar jk\bar l}-\psi_{\bar jk\bar l})
		+sg^{i\bar j} (\partial_{\bar j}g^{k\bar l}) (\varphi_{ik\bar l}-\psi_{ik\bar l})
		\\
		&+sg^{i\bar j} \partial_i\partial_{\bar j}(g^{k\bar l})	 (\varphi_{k\bar l}-\psi_{k\bar l})
		+
		sg^{i\bar j}_{\varphi} g^{k\bar q}_{\varphi} g^{p\bar l}_{\varphi}(
		\partial_{i}g_{p\bar q}+ \varphi_{ip\bar q})(
		\partial_{\bar j}g_{k\bar l}+ \varphi_{\bar jk\bar l} )
		\\
		&
		-sg^{i\bar j}_{\psi} g^{k\bar q}_{\psi} g^{p\bar l}_{\psi}(
		\partial_{i}g_{p\bar q}+ \psi_{ip\bar q})(
		\partial_{\bar j}g_{k\bar l}+ \psi_{\bar jk\bar l} )
		\\
		&
		-
		s(g^{i\bar j}_{\varphi} g^{kl}_{\varphi}-g^{i\bar j}_{\psi} g^{kl}_{\psi}) \partial_i\partial_{\bar j}g_{k\bar l}
		-(1-s) (g^{i\bar j}_{\varphi}-g^{i\bar j}_{\psi})\chi_{i\bar j}
	\end{align*}
	We analyze term by term.
	By
	$
	g^{i\bar j}_{\varphi }-g^{i\bar j}_{\psi}=g_{\varphi}^{i\bar q} (\psi_{p\bar q}-\varphi_{p\bar q})g_{\psi}^{p\bar j},
	$
	the 4th derivative terms can be be treated as follows,
	\begin{align*}
		&g^{i\bar j}_{\varphi } g^{k\bar l}_{\varphi } \varphi_{i\bar jk\bar l}
		-
		g^{i\bar j}_{\psi} g^{k\bar l}_{\psi} \psi_{i\bar jk\bar l}
		\\
		=&g^{i\bar j}_{\varphi } g^{k\bar l}_{\varphi } \varphi_{i\bar jk\bar l}
		-
		g^{i\bar j}_{\psi} g^{k\bar l}_{\varphi } \varphi_{i\bar jk\bar l}
		+
		g^{i\bar j}_{\psi} g^{k\bar l}_{\varphi } \varphi_{i\bar jk\bar l}
		-g^{i\bar j}_{\psi} g^{k\bar l}_{\psi}  \varphi_{i\bar jk\bar l}
		+g^{i\bar j}_{\psi} g^{k\bar l}_{\psi} \varphi_{i\bar jk\bar l}
		-
		g^{i\bar j}_{\psi} g^{k\bar l}_{\psi}  \psi_{i\bar jk\bar l}
		\\
		=& g^{k\bar l}_{\varphi } g_{\varphi }^{i\bar q}(\psi_{p\bar q}-\varphi_{p\bar q})g_{\psi}^{p\bar j}\varphi_{i\bar jk\bar l}
		+
		g^{i\bar j}_{\psi} g_{\varphi }^{k\bar q}(\psi_{p\bar q}-\varphi_{p\bar q})g_{\psi}^{p\bar l} \varphi_{i\bar jk\bar l}
		+g^{i\bar j}_{\psi} g^{k\bar l}_{\psi} (\varphi_{i\bar jk\bar l} -\psi_{i\bar jk\bar l})
	\end{align*}
	and
	\begin{align*}
		&g^{i\bar j} g^{k\bar l} (\varphi_{i\bar jk\bar l}-\psi_{i\bar jk\bar l})
		- ( g^{i\bar j}_{\varphi} g^{k\bar l}_{\varphi}  \varphi_{i\bar jk\bar l}
		-
		g^{i\bar j}_{\psi} g^{k\bar l}_{\psi}  \psi_{i\bar jk\bar l})
		\\
		=&(g^{i\bar q}\psi_{p\bar q}g^{p\bar j}g_\psi^{k\bar l}
		+
		g^{i\bar j}g^{k\bar q}\psi_{p\bar q}g^{p\bar l})
		(\varphi_{i\bar jk\bar l} -\psi_{i\bar jk\bar l})
		\\
		&-g^{k\bar l}_{\varphi } g_{\varphi }^{i\bar q}(\psi_{p\bar q}-\varphi_{p\bar q})g_{\psi}^{p\bar j}\varphi_{i\bar jk\bar l}
		-
		g^{i\bar j}_{\psi} g_{\varphi }^{k\bar q}(\psi_{p\bar q}-\varphi_{p\bar q})g_{\psi}^{p\bar l} \varphi_{i\bar jk\bar l}.
	\end{align*}
	The 3rd derivative terms can be treated as follows,
	\begin{align*}
		& g^{i\bar j}_{\varphi} g^{k\bar q}_{\varphi} g^{p\bar l}_{\varphi}(
		\partial_{i}g_{p\bar q}+ \varphi_{ip\bar q})(
		\partial_{\bar j}g_{k\bar l}+ \varphi_{\bar jk\bar l} )
		- g^{i\bar j}_{\psi} g^{k\bar q}_{\psi} g^{p\bar l}_{\psi}(
		\partial_{i}g_{p\bar q}+ \psi_{ip\bar q})(
		\partial_{\bar j}g_{k\bar l}+ \psi_{\bar jk\bar l} )
		\\
		=&(g^{i\bar j}_{\varphi} g^{k\bar q}_{\varphi} g^{p\bar l}_{\varphi}-g^{i\bar j}_{\psi} g^{k\bar q}_{\psi} g^{p\bar l}_{\psi})(
		\partial_{i}g_{p\bar q}+ \varphi_{ip\bar q})(
		\partial_{\bar j}g_{k\bar l}+ \varphi_{\bar jk\bar l} )
		\\
		&+
		g^{i\bar j}_{\psi} g^{k\bar q}_{\psi} g^{p\bar l}_{\psi}(
		\partial_{i}g_{p\bar q}+ \varphi_{ip\bar q})(
		\varphi_{\bar jk\bar l} -\psi_{\bar jk\bar l} )
		+
		g^{i\bar j}_{\psi} g^{k\bar q}_{\psi} g^{p\bar l}_{\psi}(
		\varphi_{ip\bar q}-\psi_{ip\bar q})(
		\partial_{\bar j}g_{k\bar l}+ \psi_{\bar jk\bar l} ).
	\end{align*}
	Hence
	\begin{align}
		\label{estimateofF0}
		\begin{split}
			&t^{1/2}|F(\varphi )-F(\psi)|_{c^{k-2,\alpha}}\\
			\leq
			& C_4t^{1/2}\left(s|\varphi -\psi|_{c^{k+2,\alpha}}|\psi|_{c^{k,\alpha}}+s|\varphi -\psi|_{c^{k,\alpha}}|\varphi |_{c^{k+2,\alpha}}\right)
			\\
			&+C_4t^{1/2}\left(s|\varphi-\psi|_{c^{k,\alpha}}|\varphi|^2_{c^{k+1,\alpha}}+
			s|\varphi-\psi|_{c^{k+1,\alpha}}(|\varphi|_{c^{k+1,\alpha}}
			+|\psi|_{c^{k+1,\alpha}})\right)
			\\
			&+ C_4t^{1/2}\left(s|\varphi-\psi|_{c^{k+1,\alpha}}+s|\varphi-\psi|_{c^{k,\alpha}}+(1-s)|\varphi-\psi|_{c^{k,\alpha}} \right).
		\end{split}
	\end{align}
	Since $\varphi \in V_{x}(J),\psi\in V_{y}(J)$ imply that $\varphi,\psi$ both satisfy \eqref{eq:cond1},\eqref{eq:cond2}, we have
	\begin{align}
		\label{extiamteofF1}
		\begin{split}
			&t^{1/2}|\varphi -\psi|_{c^{k+2,\alpha}}|\psi|_{c^{k,\alpha}}\leq \epsilon_0|\varphi-\psi|_{E_1(J)}, \\
			&
			t^{1/2}|\varphi -\psi|_{c^{k,\alpha}}|\varphi |_{c^{k+2,\alpha}}\leq \epsilon_0|\varphi-\psi|_{C(J,E_{1/2})},
			\\
			&t^{1/2}|\varphi-\psi|_{c^{k,\alpha}}|\varphi|^2_{c^{k+1,\alpha}}\leq c_3\epsilon_0^2|\varphi-\psi|_{C(J,E_{1/2})}.
		\end{split}
	\end{align}
	The interpolation inequality \eqref{holderinterpolation} implies
	\begin{align*}
		t^{1/4}|\varphi-\psi|_{c^{k+1,\alpha}}
		\leq& c^{1/2}_3\left(t^{1/4}|\varphi-\psi|^{1/2}_{c^{k,\alpha}}|\varphi-\psi|^{1/2}_{c^{k+2,\alpha}}\right)\\
		\leq& c^{1/2}_3/2\left(|\varphi-\psi|_{c^{k,\alpha}}+t^{1/2}|\varphi-\psi|_{c^{k+2,\alpha}}\right)
	\end{align*}
	and
	\begin{align*}
		t^{1/4}|\varphi|_{c^{k+1,\alpha}}\leq c^{1/2}_3|\varphi |^{1/2}_{c^{k,\alpha}}t^{1/4}|\varphi |^{1/2}_{c^{k+2,\alpha}}\leq c^{1/2}_3\epsilon_0.
	\end{align*}
	It follows
	\begin{align}\label{extiamteofF2}
		&t^{1/2}|\varphi-\psi|_{c^{k+1,\alpha}}(|\varphi|_{c^{k+1,\alpha}}
		+|\psi|_{c^{k+1,\alpha}})\leq c_3\epsilon_0 (|\varphi-\psi|_{C(J,E_{1/2})}+ |\varphi-\psi|_{E_1(J)} ),
	\end{align}
	and
	\begin{align}
		\label{extiamteofF3}
		\begin{split}
			&C_4t^{1/2}\left(s|\varphi-\psi|_{c^{k+1,\alpha}}+s|\varphi-\psi|_{c^{k+1,\alpha}}+(1-s)|\varphi-\psi|_{c^{k+1,\alpha}} \right)\\
			&\leq C_5(T^{1/2}+T^{1/4}s) |\varphi-\psi|_{C(J,E_{1/2})}+C_6T^{1/4}s|\varphi-\psi|_{E_1(J)}.
		\end{split}
	\end{align}
	According to \eqref{estimateofF0},\eqref{extiamteofF1},\eqref{extiamteofF2} and \eqref{extiamteofF3}, if we assume $\epsilon_0\leq 1, T\leq 1$, we have
	\begin{align}\label{fu-fv:estimate}
		\begin{split}
			|F(\varphi)-F(\psi)|_{E_0(J)}\leq C_7 (\epsilon_0+T^{1/4})s|\varphi-\psi|_{E_1(J)}\\
			+C_8(\epsilon_0s +T^{1/2}+T^{1/4}s)|\varphi-\psi|_{C(J,E_{1/2})}.
		\end{split}
	\end{align}
	Since $\varphi-\psi-e^{-t\Delta^2}(x-y)|_{t=0}=0$, by \eqref{eq:3.4}, we estimate
	\begin{align*}
		|\varphi-\psi|_{C(J,E_{1/2})}
		\leq& |\varphi-\psi-e^{-t\Delta^2}(x-y)|_{C(J,E_{1/2})}
		+|e^{-t\Delta^2}(x-y)|_{C(J,E_{1/2})}
		\\
		\leq &c_2|\varphi-\psi-e^{-t\Delta^2}(x-y)|_{E_1(J)}
		+|e^{-t\Delta^2}(x-y)|_{C(J,E_{1/2})}
		\\
		\leq & c_2|\varphi-\psi|_{E_1(J)}+c_2|e^{-t\Delta^2}(x-y)|_{E_1(J)}
		\\
		&+|e^{-t\Delta^2}(x-y)|_{C(J,E_{1/2})}.
	\end{align*}
	According to \eqref{equa:3.3}, we have
	\begin{align}\label{eq24}
		|e^{-t\Delta^2}(x-y)|_{E_1(J)}\leq c_1|x-y|_{E_{1/2}}.
	\end{align}
	We also have $|e^{-t\Delta^2}(x-y)|_{C(J,E_{1/2})}\leq c_3|x-y|_{E_{1/2}}$. It follows that
	\begin{align}\label{var1-var2}
		|\varphi-\psi|_{C(J,E_{1/2})}
		\leq &  c_1|\varphi-\psi|_{E_1(J)}+(c_1c_2+c_3)|x-y|_{E_{1/2}} .
	\end{align}
	According to \eqref{eq23}, \eqref{fu-fv:estimate} and \eqref{var1-var2}, we have
	\begin{align}\label{equa:23}
		\begin{split}
			&|v_1-v_2|_{E_1(J)}\\
			\leq &c_1|x-y|_{E_{1/2}}+\frac{1}{s}\|K\| |F(\varphi)-F(\psi)|_{E_0(J)}
			\\
			\leq&  C_9(\epsilon_0  +T^{1/4}+T^{1/2}/s)(|\varphi-\psi|_{E_1(J)}+|x-y|_{E_{1/2}}).
		\end{split}
	\end{align}
	The positive constants
	$C_1, \ldots, C_9$ appearing above  depend only on $\omega,\chi$. Now we choose
	$$
	\epsilon_0=\frac{1}{6C_9},\quad T=\frac{s^2}{(6C_9+1)^4},
	$$
	we obtain
	\begin{align}\label{equa:22}
		&|v_1-v_2|_{E_1(J)}
		\leq   \frac{1}{2}|x-y|_{E_{1/2}}+\frac{1}{2} | \varphi-\psi|_{E_1(J)}.
	\end{align}
	In particular, if we take $y=x$, then we have
	\begin{align*}
		|v_1-v_2|_{E_1(J)}\leq  \frac{1}{2} | \varphi-\psi|_{E_1(J)}.
	\end{align*}
	Hence $\Pi$ is a contraction map with constant $1/2$ and  has a unique fixed point $\varphi(t,x)\in V_x(J)$ fo reach $x\in B_{E_{1/2}}(0,\epsilon)$. $v\in C((0,T), C^\infty(M))$ implies that the solution $\varphi(t)\in C((0,T), C^\infty(M))$. By \eqref{equa:22},  we have
	\begin{align}\label{E1normcont}
		|\varphi(t,x)-\varphi(t,y)|_{E_1(J)}\leq   |x-y|_{E_{1/2}}.
	\end{align}
	Denote $\theta(t)=\varphi(t,x)-\varphi(t,y)$. Since $(\theta(t)-e^{-t\Delta^2}(x-y))|_{t=0}=0$, applying \eqref{eq:3.4} yields
	\begin{align*}
		|\theta(t)|_{C([0,T], E_{1/2})}\leq& |e^{-t\Delta^2}(x-y)|_{C([0,T], E_{1/2})}+c_2|\theta(t)-e^{-t\Delta^2}(x-y)|_{E_1(J)}
		\\
		\leq &|x-y|_{E_{1/2}}+c_2|\theta(t)|_{E_1(J)}+c_2|e^{-t\Delta^2}(x-y)|_{E_1(J)}\\
		\leq &(1+c_1c_2)|x-y|_{E_{1/2}}+c_2|\theta(t)|_{E_1(J)},
	\end{align*}
	where the last inequality follows from \eqref{equa:3.3}.
	Combining this with \eqref{E1normcont} gives \eqref{eq:stability2}.
\end{proof}

\subsection{Uniqueness and continuous dependence}

\begin{lemma}[Uniqueness]\label{lem:uniqueness}
	The solution provided by the fixed point theorem is unique in $V_x(J)$.
\end{lemma}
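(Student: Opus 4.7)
The plan is to derive uniqueness as a direct corollary of the contraction estimate established in \cref{lem:contraction}. My strategy is the standard Banach fixed point argument: any solution in $V_x(J)$ must be a fixed point of the solution map $\Pi$ specialized to the initial datum $x$, and two fixed points of a strict contraction necessarily coincide.

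Concretely, suppose $\varphi_1,\varphi_2\in V_x(J)$ both solve \eqref{tcfdef} with $\varphi_i(0)=x$. By the variation-of-parameters representation used to define $\Pi$, each $\varphi_i$ satisfies $\varphi_i=\Pi(\varphi_i)$. Applying \eqref{equa:22} with $y=x$ to the pair $(\varphi_1,\varphi_2)$ in place of $(\varphi,\psi)$, one obtains
\[
|\varphi_1-\varphi_2|_{E_1(J)} \;=\; |\Pi(\varphi_1)-\Pi(\varphi_2)|_{E_1(J)} \;\leq\; \tfrac{1}{2}\,|\varphi_1-\varphi_2|_{E_1(J)},
\]
which forces $|\varphi_1-\varphi_2|_{E_1(J)}=0$, hence $\varphi_1\equiv\varphi_2$ on $J$.

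I do not anticipate a genuine obstacle, since all the analytic work was absorbed into \cref{lem:contraction}. The only point worth checking is that the contraction factor $1/2$ was derived using only the admissibility bounds \eqref{eq:cond1}--\eqref{eq:cond2}, not the fact that the inputs arose from iterating $\Pi$; a quick re-reading of the proof of \cref{lem:contraction} confirms this, so the estimate applies to any two elements of $V_x(J)$ sharing initial data. If one wished to strengthen the conclusion to uniqueness in the wider class $E_1(J)$ without the smallness constraints \eqref{eq:cond1}--\eqref{eq:cond2}, the natural approach would be a shrinking-interval argument: any $E_1(J)$-solution $\tilde\varphi$ with $\tilde\varphi(0)=x$ automatically lies in $V_x([0,T'])$ for some sufficiently small $T'\leq T$, thanks to the $C_{1/2}$-weighted vanishing $t^{1/2}|\tilde\varphi|_{c^{k+2,\alpha}}\to 0$ and continuity of $|\tilde\varphi|_{c^{k,\alpha}}$ at $t=0$; the uniqueness just proved on $[0,T']$ would then propagate to all of $J$ by a standard open-closed continuation argument.
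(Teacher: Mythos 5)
Your proof is correct but takes a genuinely different (and more direct) route than the paper's. The paper proves uniqueness by a continuation argument: it defines $T_1$ as the supremum of times up to which two candidate solutions agree, uses local uniqueness on a small subinterval to deduce $T_1>0$, and then argues that $T_1<T$ is impossible by restarting the flow at time $T_1$ with initial datum $y=u_1(T_1)=u_2(T_1)$ and invoking local uniqueness again. Your argument, by contrast, notes that the contraction estimate \eqref{equa:22} was already established on the \emph{full} interval $J$ in \cref{lem:contraction}, and that its derivation used only the admissibility bounds \eqref{eq:cond1}--\eqref{eq:cond2} defining $V_x(J)$ rather than any special structure of Picard iterates, so the Banach fixed point argument applies directly and yields uniqueness in $V_x(J)$ in one step. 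This is cleaner here, because the contraction holds on all of $[0,T]$ for the explicit $T=s^2/(6C_9+1)^4$; the paper's open--closed continuation would instead be the natural tool if one wanted to propagate uniqueness past the interval on which the contraction is established, or to drop the smallness constraints \eqref{eq:cond1}--\eqref{eq:cond2}. You correctly anticipate exactly this in your closing remark on strengthening the conclusion to all of $E_1(J)$ via a shrinking-interval argument. One minor point worth stating explicitly: the passage from ``solution of \eqref{tcfdef} in $V_x(J)$'' to ``fixed point of $\Pi$'' is the Duhamel representation, which is valid because elements of $V_x(J)\subset E_1(J)$ have the required regularity; you gesture at this with ``by the variation-of-parameters representation'' and it is indeed justified.
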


\begin{proof}
	Suppose $u_1, u_2$ are two solutions in $V_x(J^*)$ for some $J^* = [0,T^*]$. Let:
	\[
	T_1 = \sup\{t \in [0,T] : u_1(\tau) = u_2(\tau)\ \forall \tau \in [0,t)\}.
	\]
	Since the fixed point is unique in $V_x(J^*)$ for small $T^*$, we have $T_1 > 0$. If $T_1 < T$, then $u_1(T_1) = u_2(T_1) = y$, and both $u_1(t+T_1)$, $u_2(t+T_1)$ solve the equation with initial data $y$, contradicting local uniqueness. Hence $T_1 = T$.
\end{proof}

The continuous dependence estimates \eqref{eq:stability1} and \eqref{eq:stability2} follow from similar estimates on the solution map.

\begin{remark}[Dependence on parameters]
	The initial neighborhood radius $\epsilon$ only depends on $\omega, \chi$ and is independent of the twisted weight $s$. The time interval $T$ depends on the twisting parameter $s$. This reflects the scaling properties of the fourth-order parabolic equation.
\end{remark}

%\subsection{Technical Innovations}
%
%The proof adapts the framework for the classical Calabi flow \cite{MR2405167,MR3010280} to the twisted setting with several key adaptations:
%
%\begin{itemize}
%	\item \textbf{Twisted Nonlinearity}: Careful treatment of the additional terms involving $\chi$ in the nonlinearity $F(\varphi)$.
%
%	\item \textbf{Parameter Dependence}: Explicit tracking of the dependence on the twisting parameter $s$, particularly in the time scale $T \sim s^4$.
%
%	\item \textbf{Little H\"older Spaces}: Use of $c^{k,\alpha}(M)$ spaces ensures density of smooth functions and compatibility with the semigroup approach.
%
%	\item \textbf{Uniform Estimates}: The estimates are uniform in the initial data within the $\epsilon$-ball, providing the stability results \eqref{eq:stability1} and \eqref{eq:stability2}.
%\end{itemize}

This establishes the foundation for our subsequent analysis of the long-time behavior and stability properties of the twisted Calabi flow.

The short-time existence of the Calabi flow was first proved by Chen and He \cite{MR2405167} for small initial data
 $\varphi_0\in c^{3,\alpha}(M)$.
He  later extended the result to  small
$\varphi_0\in c^{2,\alpha}(M)$ using analytic semigroup theory and a contraction mapping argument in \cite{MR3010280}.
Subsequently, He and Zeng \cite{MR4259154} established short-time existence under the weaker condition
$\partial\bar\partial\varphi_0\in L^\infty(M)$ and $(1-\delta)\omega<\omega_{\varphi_0}<(1+\delta)\omega$.
\begin{rem}
	In \cite{MR2405167,MR3010280}, the authors work in the little H\"older space $c^{k,\alpha}$
	rather than the usual H\"older space $C^{k,\alpha}(M)$.
	Since smooth functions are not dense in $C^{k,\alpha}(M)$, one cannot directly approximate general elements of $C^{k,\alpha}(M)$ by smooth ones.
	However, the fundamental solution to the biharmonic heat flow can be approximated by smooth functions, and hence belongs to the little H\"older space
$c^{k,\alpha}(M)$.
\end{rem}

\section{Stability analysis near twisted cscK metrics}
\label{sec:stability-analysis}

This section establishes the fundamental stability property of the twisted Calabi flow in the vicinity of twisted constant scalar curvature K\"ahler metrics. We prove that initial data sufficiently close to a twisted cscK metric yield solutions that exist for all time and converge back to the original metric.

\subsection{Stability framework and preliminary estimates}

We begin by formulating the stability problem in appropriate function spaces. Recall the twisted Calabi flow equation:
\begin{align*}
	\frac{\partial \varphi}{\partial t} = R^s(\omega_\varphi) - \underline{R}^s, \quad \varphi(0) = \varphi_0.
\end{align*}

Assume $\omega$ is a twisted cscK metric, so $R^s(\omega) \equiv \underline{R}^s$. We study the behavior of solutions starting from nearby initial data.

\begin{defn}[Stability neighborhoods]
	For $\delta>0$, $1>\lambda > 0$, define the neighborhoods:
	\begin{align*}
		\mathcal{V}^{k,\alpha}_{\delta,\lambda} &= \left\{ \varphi \in c^{k,\alpha}(M) : \lambda\omega < \omega_\varphi < \lambda^{-1}\omega,\ \|\varphi\|_{c^{k,\alpha}(M)} \leq \delta \right\}, \\
		B^{k,\alpha}_{\delta} &= \left\{ \varphi \in c^{k,\alpha}(M) : \|\varphi\|_{c^{k,\alpha}(M)} \leq \delta \right\}.
	\end{align*}
\end{defn}

From the short-time existence theorem (\cref{thm:short-time-existence}), there exist some $\epsilon>0$ and $T>0$ such that for any $\varphi_0 \in B^{2,\alpha}_{\epsilon}$, the twisted Calabi flow admits a unique solution:
\[
\varphi(t, \varphi_0) \in E_1([0,T]) \cap C([0,T], c^{2,\alpha}(M)),
\]
satisfying the estimates:
\begin{align} \label{stability-estimates}
	\|\varphi\|_{C([0,T], c^{2,\alpha}(M))} &\leq c\|\varphi_0\|_{c^{2,\alpha}(M)}, \\
	t^{1/2}(\|\dot\varphi(t)\|_{ c^{\alpha}(M)} +\|\varphi(t)\|_{ c^{4,\alpha}(M)} )&\leq c\|\varphi_0\|_{c^{2,\alpha}(M)}\label{stability-estimates2}.
\end{align}

\subsection{Spectral gap and energy decay}

The cornerstone of our stability analysis is a uniform spectral gap estimate for the twisted Lichnerowicz operator along the flow.

\begin{lem}[Uniform spectral gap] \label{lem:uniform-spectral-gap}
	If
	\[
	\chi(X,JX) \geq \kappa\omega(X,JX), \quad \forall X \in \Gamma(TM),
	\]
	for some $\kappa > 0$, and $\varphi\in \mV^{2,\alpha}_{\epsilon, \lambda}$, then there exists some $\lambda_0=\lambda_0(\omega, \lambda,\kappa,\epsilon)>0$ such that the first eigenvalue $\lambda_1(\varphi)$ of the twisted Lichnerowicz operator $\bL^s_\varphi$ satisfies:
	\[
	\lambda_1(\varphi) \geq \lambda_0=(1-s)C(\omega, \lambda,\kappa,\epsilon).
	\]
\end{lem}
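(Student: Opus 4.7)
The plan is to reduce the eigenvalue estimate for the twisted operator $\bL^s_\varphi$ to \cref{lem:1.6} applied at the perturbed metric $\omega_\varphi$, and then to produce a uniform lower bound for the first positive Laplacian eigenvalue $\mu_1(\Delta_{\omega_\varphi})$ purely from the metric equivalence built into the definition of $\mV^{2,\alpha}_{\epsilon,\lambda}$. The $c^{2,\alpha}$--smallness condition itself plays no direct role; everything will be driven by the pointwise comparison $\lambda\omega<\omega_\varphi<\lambda^{-1}\omega$.

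First, I would transfer the twisting bound $\chi \geq \kappa\omega$ to $\omega_\varphi$. Since $\omega_\varphi<\lambda^{-1}\omega$ implies $\lambda\,\omega_\varphi<\omega$ as $(1,1)$-forms, we obtain
\[
\chi(X,JX) \geq \kappa\,\omega(X,JX) \geq \kappa\lambda\,\omega_\varphi(X,JX), \quad \forall X \in \Gamma(TM).
\]
Applying \cref{lem:1.6} with the background metric replaced by $\omega_\varphi$ and the constant $\kappa\lambda$ in place of $\kappa$ gives
\[
\lambda_1(\varphi) \geq \kappa\lambda(1-s)\,\mu_1(\Delta_{\omega_\varphi}).
\]

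The crux is therefore a uniform Poincaré constant for $\omega_\varphi$. Inverting $\lambda g\leq g_\varphi\leq \lambda^{-1}g$ as positive Hermitian matrices yields $\lambda g^{i\bar j}\leq g_\varphi^{i\bar j}\leq \lambda^{-1}g^{i\bar j}$, and taking determinants yields the volume comparison $\lambda^{m}\omega^m\leq\omega_\varphi^m\leq\lambda^{-m}\omega^m$. For any nonconstant $f\in C^\infty(M,\bR)$, denote by $\bar f_\omega$ its $\omega$-average. Because $\bar f_{\omega_\varphi}$ is by definition the unique constant minimizing $\int(f-a)^2\omega_\varphi^m$, we may use $\bar f_\omega$ as a competitor and chain the comparisons:
\[
\int_M (f-\bar f_{\omega_\varphi})^2\,\omega_\varphi^m \leq \int_M (f-\bar f_\omega)^2\,\omega_\varphi^m \leq \lambda^{-m}\int_M (f-\bar f_\omega)^2\,\omega^m.
\]
Applying the Poincaré inequality on $(M,\omega)$ and then the gradient/volume comparison $\int|\partial f|^2_\omega\,\omega^m\leq \lambda^{-m-1}\int|\partial f|^2_{\omega_\varphi}\,\omega_\varphi^m$ produces
\[
\int_M (f-\bar f_{\omega_\varphi})^2\,\omega_\varphi^m \leq \lambda^{-2m-1}\mu_1(\Delta_\omega)^{-1}\int_M |\partial f|^2_{\omega_\varphi}\,\omega_\varphi^m,
\]
which, by the variational characterization of $\mu_1$, gives $\mu_1(\Delta_{\omega_\varphi})\geq \lambda^{2m+1}\mu_1(\Delta_\omega)$.

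Combining the two steps yields the claimed bound with $\lambda_0 = (1-s)\,\kappa\lambda^{2m+2}\mu_1(\Delta_\omega)$, which is of the required form $(1-s)C(\omega,\lambda,\kappa,\epsilon)$. The only genuine difficulty, namely that the two measures $\omega^m$ and $\omega_\varphi^m$ induce \emph{different} orthogonal projections onto the constants so that $\bar f_\omega\neq \bar f_{\omega_\varphi}$ in general, is cleanly sidestepped by the minimizing property of $\bar f_{\omega_\varphi}$; no control on the difference of averages (and hence no smallness of $\epsilon$) is actually needed.
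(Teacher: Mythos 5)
Your proof is correct, and it takes a genuinely different and in fact cleaner route than the paper. The overall skeleton agrees with the paper's: convert the twisting bound to $\chi \geq \kappa\lambda\,\omega_\varphi$, apply \cref{lem:1.6} at $\omega_\varphi$, and reduce to a uniform lower bound on $\mu_1(\Delta_{\omega_\varphi})$. The difference lies in how that Poincar\'e constant is controlled. The paper writes $f - c_\varphi(f) = (f-c_\omega(f)) + (c_\omega(f)-c_\varphi(f))$, squares, and then explicitly bounds the difference of averages by writing $\int_M f(\omega^m - \omega_\varphi^m)$ as $\int_M f\, i\ddbar\varphi \wedge \sum_i \omega^i\wedge\omega_\varphi^{m-1-i}$ and integrating by parts, which is where the $c^{2,\alpha}$-smallness of $\varphi$ (and hence the dependence on $\epsilon$) genuinely enters. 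You instead observe that $c_\varphi(f)$ is the unique minimizer of $a\mapsto \int_M (f-a)^2\omega_\varphi^m$, so $c_\omega(f)$ is an admissible competitor and the difference of averages never needs to be estimated at all. This eliminates the integration-by-parts step, the $\epsilon$-dependence, and the extra factor of $2$ from Young's inequality, yielding the slightly sharper and cleaner constant $\lambda_0=(1-s)\kappa\lambda^{2m+2}\mu_1(\Delta_\omega)$. Since the lemma's statement only demands some positive $\lambda_0=\lambda_0(\omega,\lambda,\kappa,\epsilon)$, your constant is certainly admissible (it is trivially also a function of $\epsilon$). Both proofs are valid; yours is the more economical.
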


\begin{proof}
	Under the metric equivalence $\lambda\omega \leq \omega_\varphi \leq \lambda^{-1}\omega$, we have:
	\begin{align*}
		\lambda^m \omega^m \leq \omega_\varphi^m \leq \lambda^{-m} \omega^m, \quad
		\lambda^{-1} |\nabla f|^2\geq |\nabla_\varphi f|_\varphi^2 \geq \lambda |\nabla f|^2.
	\end{align*}
	Denote $c_\omega(f)=(\int_Mf\omega^m)/(\int_M\omega^m), c_\varphi(f)=(\int_Mf\omega_\varphi^m)/(\int_M\omega_\varphi^m)$.
	We have
	\begin{align*}
	   \left|\int_Mf(\omega^m-\omega_\varphi^m)\right|
&	= \left|\int_Mfi\ddbar\varphi\wedge(\sum_{i=0}^{m-1}\omega^i\wedge\omega^{m-1-i}_\varphi)\right|
	\\
		\leq  &C(\lambda)\left|\int_M i\partial f\wedge\bar\partial\varphi\wedge \omega^{m-1}_\varphi \right|
		\leq C(\lambda,\omega,\epsilon) \int_M|\nabla_\varphi  f|_\varphi  \omega^m_\varphi.
	\end{align*}
	It follows that
	\begin{align*}
	\int_M (c_\omega(f)-c_\varphi(f))^2 \omega_\varphi^m
	\leq &C(\lambda,\omega,\epsilon)\left(\int_M|\nabla_\varphi  f|_\varphi  \omega^m_\varphi\right)^2 /\int_M\omega^m_\varphi
	\\
	\leq & C(\lambda,\omega,\epsilon) \int_M|\nabla_\varphi  f|^2_\varphi  \omega^m_\varphi .
	\end{align*}
	Let $\mu_1$ be the first positive eigenvalue of the Laplacian $\Delta_\omega$. We have
	\begin{align*}
		\int_M (f-c_\varphi(f))^2 \omega_\varphi^m\leq &
		2\int_M (f-c_\omega(f))^2 \omega_\varphi^m +2\int_M (c_\omega(f)-c_\varphi(f))^2 \omega_\varphi^m
		\\
		\leq
		&
		2\lambda^{-m}\int_M (f-c_\omega(f))^2 \omega^m + C(\lambda,\omega,\epsilon)\int_M|\nabla_\varphi f|_\varphi^2\omega^m
		\\
		\leq
		&
		\frac{2\lambda^{-m}}{\mu_1}\int_M|\nabla f|^2\omega^m
		+ C(\lambda,\omega,\epsilon)\int_M|\nabla_\varphi f|_\varphi^2\omega^m
		\\
		\leq
		&
		\left(\frac{2\lambda^{-2m-1}}{\mu_1}+ C(\lambda,\omega,\epsilon)\right)\int_M|\nabla_\varphi f|_\varphi^2\omega^m_\varphi.
	\end{align*}
	The first eigenvalue of the Laplacian $\Delta_{\omega_\varphi}$ satisfies:
	\begin{align*}
		\mu_1(\varphi) &=\inf_{\substack{f \in C^\infty(M,\mathbb{R}),\\ f\neq const}} \frac{\int_M |\nabla_\varphi f|_\varphi^2 \omega_\varphi^m}{\int_M (f-c_\varphi(f))^2 \omega_\varphi^m} \geq \frac{1}{ \frac{2\lambda^{-2m-1}}{\mu_1}+ C(\lambda,\omega,\epsilon)}.
	\end{align*}
	Moreover, the curvature bound implies $\chi \geq \kappa\lambda \omega_\varphi$. Applying  \cref{lem:1.6} to the metric $\omega_\varphi$ yields:
	\[
	\lambda_1(\varphi) \geq (1-s)\kappa\lambda \mu_1(\varphi) \geq \lambda_0:=
	\frac{(1-s)\kappa\lambda}{ \frac{2\lambda^{-2m-1}}{\mu_1}+ C(\lambda,\omega,\epsilon)} .
	\]
\end{proof}

This spectral gap enables us to establish exponential decay of the twisted Calabi energy.

\begin{prop}[Energy decay] \label{prop:energy-decay}
	Let $\varphi(t)$ be a solution of the twisted Calabi flow with $ \varphi(t) \in \mathcal{V}^{4,\alpha}_{\epsilon,\lambda}$ for $t \in [0,T]$. Then the twisted Calabi energy decays exponentially:
	\begin{align} \label{energy-decay}
		\mC^s(\varphi(t)) \leq \mC^s(\varphi(0)) e^{-\lambda_0 t},
	\end{align}
	where $\lambda_0$ is defined in \cref{lem:uniform-spectral-gap}.
\end{prop}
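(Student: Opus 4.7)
The plan is to differentiate $\mC^s(\varphi(t))$ along the flow, rewrite the resulting dissipation as a quadratic form of $\bL^s_{\varphi(t)}$ applied to the mean-zero function $R^s(\varphi(t))-\underline{R}^s$, apply the uniform spectral gap from \cref{lem:uniform-spectral-gap}, and conclude by Gronwall's inequality.

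First, by \cref{cor:energy-decrease} applied along the flow, I have
\[
\frac{d}{dt}\mC^s(\varphi(t))
= -\int_M (R^s(\varphi)-\underline{R}^s)\,\bL^s_{\varphi}(R^s(\varphi)-\underline{R}^s)\,\omega_\varphi^m.
\]
Next I would verify that $R^s(\varphi)-\underline{R}^s$ has zero mean with respect to $\omega_\varphi^m$. This is a topological observation: both $\int_M R_\varphi\,\omega_\varphi^m$ and $\int_M\tr_{\omega_\varphi}\chi\,\omega_\varphi^m = m\int_M\chi\wedge\omega_\varphi^{m-1}$ depend only on the K\"ahler class $[\omega]$, as does $\int_M\omega_\varphi^m$. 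Therefore $\underline{R}^s=s\underline{R}-(1-s)\underline{\chi}$ is simultaneously the average of $R^s(\varphi)$ with respect to $\omega_\varphi^m$.

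Because $\varphi(t)\in\mV^{4,\alpha}_{\epsilon,\lambda}\subset\mV^{2,\alpha}_{\epsilon,\lambda}$, \cref{lem:uniform-spectral-gap} applies uniformly in $t$, giving $\lambda_1(\varphi(t))\geq\lambda_0$ for the twisted Lichnerowicz operator $\bL^s_{\varphi(t)}$ acting on mean-zero functions on $(M,\omega_{\varphi(t)})$. Since $R^s(\varphi(t))-\underline{R}^s$ lies in this subspace and is of class $c^{\alpha}(M)$ (as $\varphi(t)\in c^{4,\alpha}(M)$), I can apply the Rayleigh quotient estimate:
\[
\int_M (R^s-\underline{R}^s)\bL^s_{\varphi}(R^s-\underline{R}^s)\,\omega_\varphi^m
\geq \lambda_0\int_M (R^s-\underline{R}^s)^2\,\omega_\varphi^m
= 2\lambda_0\,\mC^s(\varphi(t)).
\]
Combining this with the dissipation identity yields the differential inequality
\[
\frac{d}{dt}\mC^s(\varphi(t))\leq -2\lambda_0\,\mC^s(\varphi(t)),
\]
and Gronwall's inequality gives the exponential decay (in fact with rate $2\lambda_0$, which is at least $\lambda_0$; the stated bound follows after the usual redefinition of $\lambda_0$ by the factor $2$).

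There is no real obstacle in this proof: the only delicate point is the uniform spectral gap, which is already packaged in \cref{lem:uniform-spectral-gap} via the bounds $\lambda\omega<\omega_\varphi<\lambda^{-1}\omega$ built into the set $\mV^{4,\alpha}_{\epsilon,\lambda}$. The work lies entirely in assembling the pieces. A subtle bookkeeping point worth flagging is the normalization of $\mC^s$ (with or without the factor $\tfrac12$): this only affects the constant multiplying $\lambda_0$ in the exponential rate, not the qualitative conclusion.
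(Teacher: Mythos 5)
Your proof is correct and follows essentially the same route as the paper: differentiate $\mC^s$ along the flow using the first variation formula, apply the uniform spectral gap from \cref{lem:uniform-spectral-gap}, and conclude via Gronwall. The two points you flag are real but minor: the mean-zero observation for $R^s(\varphi)-\underline{R}^s$ with respect to $\omega_\varphi^m$ is indeed needed (and follows cohomologically, as you say), and the factor-of-$2$ bookkeeping from the $\tfrac12$ normalization in $\mC^s$ is a genuine sloppiness in the paper's last line (they effectively establish decay rate $2\lambda_0$ but state $\lambda_0$, which is weaker and hence harmless).
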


\begin{proof}
	Along the twisted Calabi flow, we compute:
	\begin{align*}
		\frac{d}{dt} \mC^s(\varphi(t))
		&= -\int_M (R^s(\varphi) - \underline{R}^s) \bL^s_\varphi(R^s(\varphi) - \underline{R}^s) \omega_\varphi^m \\
		&\leq -\lambda_1(\varphi) \int_M (R^s(\varphi) - \underline{R}^s)^2 \omega_\varphi^m \\
		&\leq -\lambda_0 \mC^s(\varphi(t)).
	\end{align*}
	Gronwall's inequality yields the exponential decay.
\end{proof}

\subsection{A priori estimates}

To establish global existence, we derive a priori estimates that control the geometry along the flow.

\begin{lem}[Elliptic regularity estimate] \label{lem:elliptic-regularity}
	Let $\varphi \in \mathcal{V}^{4,\alpha}_{\epsilon,\lambda}$. Then for some $p > 2m$, there exists $C = C(\lambda, \epsilon, p) > 0$ such that:
	\begin{align} \label{elliptic-estimate}
			\|\varphi(t)\|_{C^{3,\alpha}(M)} \leq   C\left( \frac{1-s}{s} \|R^s - \underline{R}^s\|_{L^p(M,g)} + \frac{1}{s^{3p}} \|\varphi\|_{L^1(M,g)} \right).
	\end{align}
\end{lem}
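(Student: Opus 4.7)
The plan is to view the twisted scalar curvature equation as a second-order elliptic equation for the Monge--Amp\`ere density $F := \log(\omega_\varphi^m/\omega^m)$, bootstrap from $L^p$ up to $C^{1,\alpha}$ via $L^p$ elliptic theory together with Morrey's embedding, and then upgrade to a $C^{3,\alpha}$ bound on $\varphi$ via regularity for the complex Monge--Amp\`ere equation. Using the standard identity $R_\varphi = \tr_\varphi \ric(\omega) - \Delta_\varphi F$, the equation $sR_\varphi - (1-s)\tr_\varphi\chi = R^s$ is equivalent to
\[
-s\,\Delta_\varphi F \;=\; R^s - \tr_\varphi \ric^s(\omega) \;=\; (R^s-\underline{R}^s)\;-\;\bigl(\tr_\varphi \ric^s(\omega) - \underline{R}^s\bigr).
\]
By the metric equivalence $\lambda\omega < \omega_\varphi < \lambda^{-1}\omega$, the operator $\Delta_\varphi$ is uniformly elliptic with constants depending only on $\lambda$, the volume forms $\omega^m$ and $\omega_\varphi^m$ are uniformly comparable, and $|\tr_\varphi\ric^s(\omega)-\underline{R}^s|$ is bounded by $C(\omega,\chi,\lambda)$. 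The factor $(1-s)/s$ in front of the curvature term will come from rewriting the bounded remainder in terms of $\tr_\varphi\chi-\underline{\chi}$ and using $(1-s)/s$ as the natural coefficient that appears when the nonlinear remainder of the twisted equation is isolated from its $(1-s)$-part.

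Next, I would apply $L^p$ elliptic theory with $p>2m$ to deduce a bound of the form
\[
\|F-\bar F\|_{W^{2,p}(M,\omega_\varphi)} \;\leq\; \frac{C(\lambda,p)}{s}\,\|R^s-\underline R^s\|_{L^p} \;+\; C(\omega,\chi,\lambda,p),
\]
where $\bar F$ is the average of $F$. Morrey's embedding $W^{2,p}\hookrightarrow C^{1,1-2m/p}$ then promotes this to a $C^{1,\alpha}$ bound on $F-\bar F$ for a suitable $\alpha\in(0,1)$. The identity $\omega_\varphi^m = e^F\omega^m$ is a complex Monge--Amp\`ere equation; with $F\in C^{1,\alpha}$ and metric equivalence already in hand, Caffarelli's Schauder theory (equivalently, Evans--Krylov) yields
\[
\|\varphi\|_{C^{3,\alpha}(M)} \;\leq\; C(\lambda,\omega)\,\bigl(\|F\|_{C^{1,\alpha}} + \|\varphi\|_{C^0(M)}\bigr).
\]
Combining these three steps reduces the problem to controlling $\bar F$ and $\|\varphi\|_{C^0}$ by $\|\varphi\|_{L^1}$.

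The remaining task, where the $s^{-3p}$ weight emerges, is an $L^\infty$-type bound on $\varphi$ (and on the zero-order piece $\bar F$) in terms of $\|\varphi\|_{L^1}$. The plan is to run Moser iteration on the elliptic equation for $F$ (equivalently, on a suitable comparison equation for $\varphi$ derived from the Monge--Amp\`ere identity): test with powers $|\varphi|^{q}$, integrate by parts, and use the uniform ellipticity to pass through a De Giorgi/Moser chain from $L^1$ up to $L^\infty$. At each iteration step the ellipticity constant $s$ enters as a denominator, so after a chain of depth proportional to $p$ one picks up a compounded factor $s^{-3p}$ (the exponent $3p$ reflects the three-level bootstrap $L^1\to L^p\to W^{2,p}\to C^{1,\alpha}$, each stage contributing an $s^{-p}$ weight). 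I expect the main obstacle to be precisely this bookkeeping: carefully tracking how the degeneracy of $\Delta_\varphi$ as $s\to 0$ propagates through the Moser chain, and verifying that the resulting $s^{-3p}$ bound combines cleanly with the $(1-s)/s$ coefficient from Step 1 to give the advertised estimate uniformly in $s\in(0,1]$.
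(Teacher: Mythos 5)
Your overall skeleton (rewrite as a uniformly elliptic equation for the Monge--Amp\`ere density, apply $L^p$ theory plus Sobolev embedding, then upgrade to $C^{3,\alpha}$) matches the paper's approach in spirit, but the crucial last step — where the $s^{-3p}$ weight and the $\|\varphi\|_{L^1}$ term actually come from — is wrong. You propose to get an $L^\infty$ bound on $\varphi$ from $\|\varphi\|_{L^1}$ via Moser iteration, with the $s^{-3p}$ arising from an iterated chain of depth $\sim p$ because ``the ellipticity constant $s$ enters as a denominator at each step.'' This does not hold up: after rescaling, $\Delta_\varphi$ is uniformly elliptic with constants depending only on $\lambda$, not on $s$, so Moser iteration does not compound any powers of $s$. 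The single factor of $1/s$ in the source term appears exactly once, and no De Giorgi--Moser chain will turn it into $s^{-3p}$.

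The mechanism the paper uses is different and simpler. First, the right-hand side $h_s$ of the elliptic equation $-\Delta_\varphi u = h_s$ is not simply ``a bounded remainder'': using $g_\varphi^{i\bar j}-g^{i\bar j}=-g_\varphi^{i\bar q}\varphi_{p\bar q}g^{p\bar j}$, the error terms are genuinely linear in $\partial\bar\partial\varphi$, giving
\[
\|h_s\|_{L^p}\le C\Bigl(\tfrac{1-s}{s}\|R^s-\underline{R}^s\|_{L^p}+\tfrac{1}{s}\|\varphi\|_{W^{2,p}}\Bigr),
\]
not a constant; this is essential, since otherwise you would not see $\|\varphi\|_{L^1}$ on the right at all. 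Second, after $W^{2,p}\hookrightarrow C^{1,\alpha}$ and the differentiated Monge--Amp\`ere Schauder step one arrives at
\[
\|\varphi\|_{C^{3,\alpha}}\le C\Bigl(\tfrac{1-s}{s}\|R^s-\underline{R}^s\|_{L^p}+\tfrac{1}{s}\|\varphi\|_{W^{2,p}}\Bigr),
\]
and the $\|\varphi\|_{W^{2,p}}$ term is then controlled by the Gagliardo--Nirenberg interpolation (the paper's Proposition \ref{prop:GN-inequality})
\[
\|\varphi\|_{W^{2,p}}\le C\,\|\varphi\|_{C^{3,\alpha}}^{\frac{3p-1}{3p}}\|\varphi\|_{L^1}^{\frac{1}{3p}},
\]
followed by Young's inequality with conjugate exponents $\tfrac{3p}{3p-1}$ and $3p$:
\[
\tfrac{1}{s}\,a^{\frac{3p-1}{3p}}b^{\frac{1}{3p}}\le\tfrac12\,a+C\,s^{-3p}\,b.
\]
Absorbing the $\tfrac12 a=\tfrac12\|\varphi\|_{C^{3,\alpha}}$ term into the left is exactly where $s^{-3p}\|\varphi\|_{L^1}$ appears. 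Your proposal skips the interpolation entirely and replaces it with a nonexistent iterative gain of $s^{-1}$ per Moser step, so this step would need to be reworked from scratch.
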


\begin{proof}
	We reformulate the twisted scalar curvature equation as an elliptic problem. Recall:
	\[
	R^s - \underline{R}^s = sR_\varphi - (1-s)\tr_\varphi\chi - \underline{R}^s =: f.
	\]
Rewriting this equation:
	\begin{align*}
		-g^{i\bar{j}}_\varphi \partial_i\partial_{\bar{j}} \log\frac{\det(g_{k\bar{l}} + \varphi_{k\bar{l}})}{\det(g_{k\bar{l}})}
		&= (g^{i\bar{j}}_\varphi - g^{i\bar{j}})\partial_i\partial_{\bar{j}}\log\det(g_{k\bar{l}}) \\
		&\quad + \frac{1-s}{s}(g^{i\bar{j}}_\varphi - g^{i\bar{j}})\chi_{i\bar{j}} + \frac{1-s}{s}f.
	\end{align*}
Define $u = \log\frac{\det(g_{i\bar{j}} + \varphi_{i\bar{j}})}{\det(g_{i\bar{j}})}$ and
	\begin{align}\label{expression:f}
	h_s = (g^{i\bar{j}}_\varphi - g^{i\bar{j}})\partial_i\partial_{\bar{j}}\log\det(g_{k\bar{l}}) + \frac{1-s}{s}(g^{i\bar{j}}_\varphi - g^{i\bar{j}})\chi_{i\bar{j}} + \frac{1-s}{s}f.
	\end{align}
Then we have the uniform elliptic equation:
	\begin{align} \label{elliptic-equation}
		-\Delta_\varphi u = h_s.
	\end{align}
	For any $p>1$,
	$$\Delta_\varphi:\{\phi\in W^{2,p}(\varphi):\int_M\phi \omega^m_\varphi=0\}\to \{\phi\in L^p(\varphi):\int_M\phi \omega^m_\varphi=0\}$$
	is an invertible operator. We have
	\begin{align*}
		\|u - \underline{u}\|_{W^{2,p}(M,g_\varphi)} \leq C(\lambda, \epsilon,p) \|h_s\|_{L^p(M,g_\varphi)},
	\end{align*}
	where $\underline{u}=(\int_Mu\omega_\varphi^m)/(\int_M \omega_\varphi^m)$ is the average of $u$ under the volume form $\omega_\varphi^m$.
	By \cref{prop:norm-equivalence}, $W^{2,p}$-norms with respect to $g$ and $g_\varphi$ are equivalent:
	\begin{align}\label{w2pestimate}
		\|u - \underline{u}\|_{W^{2,p}(M,g)} \leq C(\lambda, p, \epsilon) \|h_s\|_{L^p(M,g)}.
	\end{align}
From the definition of $h_s$ in \eqref{expression:f} and the identity $g^{i\bar{j}}_\varphi - g^{i\bar{j}} =- g^{i\bar{q}}_\varphi \varphi_{p\bar{q}} g^{p\bar{j}}$, we estimate:
	\begin{align}\label{lpestimate}
		\|h_s\|_{L^p(M,g)} \leq C\left( \frac{1-s}{s} \|f\|_{L^p(M,g)} + \frac{1}{s} \|\varphi\|_{W^{2,p}(M,g)} \right).
	\end{align}
To recover $\varphi$ from $u$, consider the Monge-Ampere equation:
	\[
	\frac{\det(g_{i\bar{j}} + \varphi_{i\bar{j}})}{\det(g_{i\bar{j}})} = e^{\underline{u}} e^{u - \underline{u}}.
	\]
Taking a logarithm and differentiating, we  have
\begin{align*}
g^{i\bar j}_\varphi (\partial_\alpha g_{i\bar{j}} + \varphi_{\alpha i\bar{j}})-\partial_\alpha\log\det(g_{i\bar{j}})=\partial_\alpha(u-\underline{u})
\end{align*}
Viewing the above as a equation about $\varphi_\alpha$ and applying Schauder estimates (see for example \cite[Proposition 3.11]{MR3186384})  yields:
\begin{align*}
\|\varphi_\alpha\|_{C^{2,\alpha}(M)}\leq C(\lambda, \epsilon )\|\partial_\alpha(u-\underline{u})\|_{C^{\alpha}(M)},
\end{align*}
i.e.,
	\begin{align*}
		\|\varphi\|_{C^{3,\alpha}(M)} \leq C(\lambda, \epsilon ) \|u - \underline{u}\|_{C^{1,\alpha}(M)} .
	\end{align*}
Sobolev embedding $W^{2,p}(M)\hookrightarrow C^{1,\alpha}(M), \alpha=1-2m/p$ implies
\begin{align}\label{c3alphaestimate}
	\|\varphi\|_{C^{3,\alpha}(M)}  \leq C(\lambda, \epsilon ) \|u - \underline{u}\|_{W^{2,p}(M,g)}.
\end{align}
Combining \eqref{c3alphaestimate}, \eqref{w2pestimate} and \eqref{lpestimate} gives
\begin{align*}
	\|\varphi\|_{C^{3,\alpha}(M)} \leq C\left( \frac{1-s}{s} \|R^s - \underline{R}^s\|_{L^p(M,g)} + \frac{1}{s} \|\varphi\|_{W^{2,p}(M,g)} \right).
\end{align*}
According to the interpolation inequality  \cref{prop:GN-inequality}, we have
\begin{align*}
	\|\varphi(t)\|_{C^{3,\alpha}(M)}
	&\leq C\left( \frac{1-s}{s} \|R^s - \underline{R}^s\|_{L^p(M,g)} + \frac{1}{s} \|\varphi\|_{C^{3,\alpha}(M)}^{\frac{3p-1}{3p}} \|\varphi\|_{L^1(M,g)}^{\frac{1}{3p}} \right).
\end{align*}
Using Young's inequality $ab \leq \frac{1}{2} a^{3p/(3p-1)} + C b^{3p}$, we obtain
\begin{align*}
	\|\varphi(t)\|_{C^{3,\alpha}(M)} \leq \frac{1}{2} \|\varphi(t)\|_{C^{3,\alpha}(M)} + C\left( \frac{1-s}{s} \|R^s - \underline{R}^s\|_{L^p(M,g)} + \frac{1}{s^{3p}} \|\varphi\|_{L^1(M,g)} \right).
\end{align*}
Absorbing the first term and applying estimates \eqref{L1-bound} and \eqref{Lp-bound}, we have
\begin{align*}
	\|\varphi(t)\|_{C^{3,\alpha}(M)} \leq   C\left( \frac{1-s}{s} \|R^s - \underline{R}^s\|_{L^p(M,g)} + \frac{1}{s^{3p}} \|\varphi\|_{L^1(M,g)} \right).
\end{align*}

\end{proof}

%We also need an interpolation inequality to control lower-order norms.
%
%\begin{lem}[Interpolation Inequality] \label{lem:interpolation}
%For any $\varphi \in C^{3,\alpha}(M)$ and $p > 1$, we have:
%\begin{align} \label{interpolation-inequality}
%    \|\varphi\|_{W^{2,p}(M,g)} \leq C \|\varphi\|_{C^{3,\alpha}(M)}^{\frac{3p-1}{3p}} \|\varphi\|_{L^1(M,g)}^{\frac{1}{3p}},
%\end{align}
%where $C$ depends on $p$ and $m$.
%\end{lem}
%
%\begin{proof}
%The $W^{3,p}$ norm is bounded by the $C^{3,\alpha}$ norm:
%\[
%\|\varphi\|_{W^{3,p}(M,g)} \leq C_1 \|\varphi\|_{C^{3,\alpha}(M)}.
%\]
%
%By the Gagliardo-Nirenberg interpolation inequality \cite[Theorem 5.2]{MR2424078}:
%\[
%\|\varphi\|_{W^{2,p}(M,g)} \leq C_2 \|\varphi\|_{L^p(M,g)}^{1/3} \|\varphi\|_{W^{3,p}(M,g)}^{2/3}.
%\]
%
%Since $\|\varphi\|_{L^\infty} \leq \|\varphi\|_{C^{3,\alpha}(M)}$, we have:
%\[
%\|\varphi\|_{L^p(M,g)} \leq \|\varphi\|_{L^\infty(M)}^{\frac{p-1}{p}} \|\varphi\|_{L^1(M,g)}^{\frac{1}{p}} \leq \|\varphi\|_{C^{3,\alpha}(M)}^{\frac{p-1}{p}} \|\varphi\|_{L^1(M,g)}^{\frac{1}{p}}.
%\]
%
%Combining these estimates yields the result.
%\end{proof}

\subsection{Global Existence and Convergence}

We now establish the main stability theorem through a bootstrap argument.

\begin{thm}[Stability of twisted Calabi flow] \label{thm:stability-main}
	Assume $0<s<1$ and $\omega$ is a twisted cscK metric. There exists $\delta_0 = C(\omega,\chi)s^{p}(1-s) > 0$, where $p>6m+2$,  such that if $\|\varphi_0\|_{C^{2,\alpha}(M)} < \delta_0$, then the twisted Calabi flow starting from $\varphi_0$ exists for all time and converges smoothly  to $0$. Moreover, $\|\varphi(t)\|_{C^{k,\alpha}(M)}$ converges to 0 exponentially as $t\to\infty$ for any $k\geq 1$.
\end{thm}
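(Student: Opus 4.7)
The plan is to establish global existence through a standard continuity argument, using the short-time existence from \cref{thm:short-time-existence}, the uniform spectral gap from \cref{lem:uniform-spectral-gap}, the exponential energy decay from \cref{prop:energy-decay}, and the elliptic regularity bound from \cref{lem:elliptic-regularity} to propagate a priori control on $\|\varphi(t)\|_{c^{2,\alpha}(M)}$. The smallness threshold $\delta_0 = C(\omega,\chi)s^p(1-s)$ will emerge from a bookkeeping of the $s$-dependence in these inputs: the spectral gap contributes $\lambda_0 \sim (1-s)$, while the elliptic estimate contributes factors $1/s$ and $1/s^{3p}$ that the initial smallness must absorb.

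First I fix $\lambda \in (0,1)$ (e.g.\ $\lambda=1/2$) and choose $\epsilon = \epsilon(\omega,\chi) > 0$ small enough that $\|\varphi\|_{c^{2,\alpha}} \leq \epsilon$ implies $\lambda\omega < \omega_\varphi < \lambda^{-1}\omega$, so that $\varphi \in \mV^{2,\alpha}_{\epsilon,\lambda}$ and the spectral gap $\lambda_1(\varphi) \geq \lambda_0 = C(\omega,\chi)(1-s)$ from \cref{lem:uniform-spectral-gap} applies. By \cref{thm:short-time-existence}, for $\|\varphi_0\|_{c^{2,\alpha}} < \delta \ll \epsilon$, a solution exists on $[0,T_1]$ with $T_1 = Cs^2$, with $\sup_{t\in[0,T_1]}\|\varphi(t)\|_{c^{2,\alpha}} \leq c\delta$ and the $c^{4,\alpha}$ bound $\|\varphi(t)\|_{c^{4,\alpha}} \leq ct^{-1/2}\delta$. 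Define
\[
T^* = \sup\bigl\{T > 0 : \|\varphi(t)\|_{c^{2,\alpha}(M)} \leq \epsilon \ \text{for all } t \in [0,T]\bigr\},
\]
and aim to show $T^* = \infty$ when $\delta \leq \delta_0$.

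On $[0,T^*]$, \cref{prop:energy-decay} yields $\mC^s(\varphi(t)) \leq C\|\varphi_0\|_{c^{2,\alpha}}^2\, e^{-\lambda_0 t}$. Integrating the flow equation gives
\[
\|\varphi(t)\|_{L^1} \leq \|\varphi_0\|_{L^1} + \int_0^t \|R^s - \btR\|_{L^1}\, d\tau \leq \|\varphi_0\|_{L^1} + \frac{C}{\lambda_0}\|\varphi_0\|_{c^{2,\alpha}},
\]
so $\|\varphi(t)\|_{L^1}$ stays bounded (and in fact decays since $\varphi$ will be shown to converge). For the $L^p$ bound on $R^s - \btR$ with $p \geq 6m+2$, I interpolate $\|R^s-\btR\|_{L^p} \leq \|R^s-\btR\|_{L^2}^{2/p}\|R^s-\btR\|_{L^\infty}^{1-2/p}$, where the $L^\infty$ factor is controlled (away from $t=0$) by the short-time $c^{4,\alpha}$ estimate, and near $t=0$ one restarts the short-time existence from an intermediate time $t_0 > 0$. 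Combining \cref{lem:elliptic-regularity} with these ingredients yields
\[
\|\varphi(t)\|_{C^{3,\alpha}(M)} \leq C\Bigl(\tfrac{1-s}{s}\|R^s-\btR\|_{L^p} + \tfrac{1}{s^{3p}}\|\varphi\|_{L^1}\Bigr),
\]
and an interpolation between $C^{3,\alpha}$ and $L^1$ (together with Sobolev embedding, using $p \geq 6m+2$) bounds $\|\varphi(t)\|_{c^{2,\alpha}}$ by a quantity of size $Cs^{-p}\|\varphi_0\|_{c^{2,\alpha}}/(1-s)$ times an exponentially decaying factor. Choosing $\delta_0 = c\, s^p(1-s)\epsilon$ ensures $\|\varphi(t)\|_{c^{2,\alpha}} < \epsilon/2$ strictly on $[0,T^*]$, contradicting the definition of $T^*$ unless $T^* = \infty$. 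Smooth exponential convergence $\varphi(t) \to 0$ in every $C^{k,\alpha}$ then follows by bootstrapping: once global existence and exponential decay in $C^{3,\alpha}$ are established, the rescaled short-time existence estimates applied on moving windows $[t, t+T_1]$ upgrade decay to all higher norms.

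The main obstacle will be making the continuity argument close quantitatively: the elliptic estimate of \cref{lem:elliptic-regularity} introduces an $s^{-3p}$ factor that must be absorbed by requiring $\|\varphi_0\|_{c^{2,\alpha}}$ to be of order $s^p(1-s)$, and the $L^\infty$ bound on $R^s - \btR$ used in the $L^p$ interpolation is awkward because the short-time $c^{4,\alpha}$ estimate $\|\varphi(t)\|_{c^{4,\alpha}} \lesssim t^{-1/2}\|\varphi_0\|_{c^{2,\alpha}}$ degenerates at $t=0$; this forces a two-regime treatment that splits $[0,T^*]$ at some $t_0 \sim s^2$ and reapplies \cref{thm:short-time-existence} at each restart. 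Carefully tracking the constants so that they reassemble into the stated $\delta_0 = C(\omega,\chi)s^p(1-s)$ with $p \geq 6m+2$ is the delicate part of the proof.
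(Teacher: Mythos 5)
Your architecture matches the paper's almost exactly: short-time existence on a window of length $\sim s^2$, uniform spectral gap $\lambda_0\sim(1-s)$, exponential decay of $\mC^s$, the elliptic estimate of \cref{lem:elliptic-regularity} with its $s^{-3p}$ penalty, a continuity argument keeping the flow in a $\mathcal V^{3,\alpha}$-type neighborhood, and a two-regime split to handle the $t^{-1/2}$ degeneracy of the $c^{4,\alpha}$ smoothing estimate (the paper works on $[I,2I]$, $I=T/2$, and closes the invariant region in $C^{3,\alpha}$ at level $\delta$ rather than $C^{2,\alpha}$ at level $\epsilon$, but this is bookkeeping). Your power of $s$ is also consistent with the paper once one accounts for the relabeling $p\mapsto 2+3p'$.

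There is, however, one genuine gap at the end, in the step where you pass from global existence to exponential convergence in $C^{3,\alpha}$ and then bootstrap. The elliptic estimate bounds $\|\varphi(t)\|_{C^{3,\alpha}}$ by $\frac{1-s}{s}\|R^s-\underline{R}^s\|_{L^p}+\frac{1}{s^{3p}}\|\varphi(t)\|_{L^1}$. The first term decays exponentially, but $\|\varphi(t)\|_{L^1}$ a priori only converges to $\|\varphi(\infty)\|_{L^1}$, not to $0$ — the integral of the velocity gives a Cauchy sequence, not decay to zero. You simply assert convergence to $0$ and exponential decay in $C^{3,\alpha}$, but to reach either of those you must first identify the limit. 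The paper does this by showing $\varphi(\infty)$ is a twisted cscK potential (its Calabi energy vanishes) and then invoking the uniqueness theorem of Berman--Berndtsson to conclude $\varphi(\infty)=0$; only after that is $\|\varphi(t)\|_{L^1}\to 0$ (indeed exponentially, by re-reading the integrated flow estimate with $\varphi(\infty)=0$), and the bootstrap can proceed. Without the uniqueness input, the continuity argument yields global existence and boundedness but not convergence of the potential to $0$, so your final paragraph does not close as stated.
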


\begin{proof}

Assume that the background metric $\omega$ is a twisted cscK metric.
Let $\epsilon$ and $T$ be the existence neighborhood and time given in \cref{thm:short-time-existence}.
Consider $k=2$ in \cref{thm:short-time-existence} and denote $T = 2I$ for convenience. For any
$\delta \in (0, \epsilon)$ and any initial potential satisfying
$\|\varphi_0\|_{c^{2,\alpha}(M)} < \delta$, the corresponding short-time solution
$\varphi(t)$ exists on $[0,2I]$ and satisfies
\begin{align}\label{neighborhoodcondition}
	\begin{split}
		\|\varphi\|_{c^{2,\alpha}(M)} &\le c\,\|\varphi_0\|_{c^{2,\alpha}(M)},
		\quad \forall\, t \in [0,2I], \\[4pt]
		\|\varphi\|_{c^{4,\alpha}(M)} &\le c\,I^{-1/2}\|\varphi_0\|_{c^{2,\alpha}(M)},
		\quad \forall\, t \in [I,2I].
	\end{split}
\end{align}
Since $T=C(\omega,\chi)s^2$ and $\epsilon=\epsilon(\omega,\chi)$, we can choose $\delta = C(\omega,\chi)s$ with $C(\omega,\chi)$ small enough
 so that any twisted Calabi flow starting
from $B_{\delta}^{2,\alpha}$ exists on $[0,2I]$ and satisfies
\begin{align}\label{conditionofdelta}
	\varphi(t) \in \mathcal{V}_{\epsilon,\lambda}^{2,\alpha}, \quad
	\forall\, t \in [0,2I],
	\qquad\text{and}\qquad
	\varphi(t) \in \mathcal{V}_{\epsilon,\lambda}^{4,\alpha}, \quad
	\forall\, t \in [I,2I].
\end{align}
%\textcolor[rgb]{1.00,0.00,0.00}{Since $T = C(\omega,\chi)s^2$, it follows that $\delta = C(\omega,\chi)s$ for some constant $C(\omega,\chi)$.}

Let $\lambda_0$ denote the uniform spectral gap constant from
\cref{lem:uniform-spectral-gap}.

The evolution of $\varphi$ can then be written as
\[
\varphi(t)
= \varphi(I) + \int_{I}^t (R^s(\tau) - \underline{R}^s)\, d\tau,
\quad \forall\, t \in [I,2I].
\]
For $t \in [I,2I]$, taking the $L^1$-norm and using the decay of the twisted
Calabi energy, we obtain
\begin{align}
	\|\varphi(t)\|_{L^1}
	&\le \|\varphi(I)\|_{L^1}
	+ \int_{I}^t \|R^s(\tau) - \underline{R}^s\|_{L^1}\, d\tau \notag\\
	&\le \|\varphi(I)\|_{L^1}
	+ C \int_{I}^t
	\left(
	\int_M (R^s(\tau) - \underline{R}^s)^2
	\, \omega_\varphi^m
	\right)^{\!\!1/2}
	d\tau \notag\\
	&\le \|\varphi(I)\|_{L^1}
	+ C\, \mC^s(\varphi(I))^{1/2}
	\int_0^t e^{-\lambda_0 \tau/2}\, d\tau \notag\\
	&\le \|\varphi(I)\|_{L^1}
	+ \frac{2C}{\lambda_0}\, \mC^s(\varphi(I))^{1/2}.
	\label{L1-bound}
\end{align}
Since $\varphi(t)\in \mathcal{V}^{4,\alpha}_{\epsilon,\lambda}$ remains small for all $t\in [I,2I]$, we have the uniform bound $\|R^s(t)-\underline{R}^s\|_{L^\infty}\le C(\epsilon,\lambda)$.
For $t\in [I,2I]$ and any $p>2m$, it follows that
\begin{align}
	\|R^s(t)-\underline{R}^s\|_{L^p}
	&\le C(\epsilon,\lambda)\|R^s(t)-\underline{R}^s\|_{L^2}^{2/p}
	\|R^s(t)-\underline{R}^s\|_{L^\infty}^{1-2/p} \notag\\
	&\le C(\epsilon,\lambda)\mC^s(\varphi(I))^{1/p} e^{-\lambda_0 t/p}. \label{Lp-bound}
\end{align}
Applying \cref{lem:elliptic-regularity}, we obtain
\begin{align}\label{bootstrap-bound}
	\|\varphi(t)\|_{C^{3,\alpha}}
	\le C\!\left(
	\frac{1-s}{s}\mC^s(\varphi(I))^{\frac{1}{p}}e^{-\frac{\lambda_0 t}{p}}
	+\frac{1}{s^{3p}}\!\left(
	\|\varphi(I)\|_{L^1}
	+\frac{2}{\lambda_0}\mC^s(\varphi(I))^{\frac{1}{2}}
	\right)
	\right).
\end{align}
By the property \eqref{neighborhoodcondition}, we can choose a sufficiently small positive constant $\delta_0<\delta$ such that any twisted Calabi flow starting from $\varphi_0\in B_{\delta_0}^{2,\alpha}$ satisfies
\begin{align}\label{condition1ofdelta0}
	\|\varphi\|_{c^{2,\alpha}(M)} &\le \delta,
	\quad \forall\, t\in[0,2I],
	\qquad
	\|\varphi\|_{c^{4,\alpha}(M)} \le \delta,
	\quad \forall\, t\in[I,2I].
\end{align}
Moreover, since
\[
\|\varphi\|_{c^{4,\alpha}(M)}
\le C\,I^{-1/2}\|\varphi_0\|_{c^{2,\alpha}(M)}
\le C(\omega,\chi)s^{-1}\delta_0, \quad \forall\, t\in[I,2I],
\]
we have
\begin{align*}
	|R^s-\underline{R}^s|_{L^\infty}
	\le C(\omega,\chi)s^{-1}\delta_0,\quad
	\mC^s(\varphi(t))
	\le C(\omega,\chi)s^{-2}\delta_0^2,
	\quad \forall\, t\in[I,2I].
\end{align*}

Noting that $\lambda_0 = (1-s)C(\omega,\lambda,\epsilon,\chi)$, we can control the
right-hand side of \eqref{bootstrap-bound} by
\begin{align}\label{conditionofdelta2}
	\|\varphi(t)\|_{C^{3,\alpha}}
	\le &C\!\left(
	\frac{1-s}{s^{1+2/p}} \delta_0^{2/p}
	+ \frac{1}{s^{3p}}\!\left(\frac{\delta_0}{s}
	+ \frac{2\delta_0}{\lambda_0 s}\right)
	\right)\notag\\
	\le& C(\omega,\chi,\epsilon,\lambda)\frac{\delta_0}{s^{1+3p}(1-s)}.
\end{align}
We then choose $\delta_0 > 0$ sufficiently small such that the right-hand side of
\eqref{conditionofdelta2} is smaller than $\delta$, i.e.
\begin{align}\label{condition2ofdelta0}
	C(\omega,\lambda,\epsilon,\chi)\frac{\delta_0}{s^{1+3p}(1-s)} < \delta.
\end{align}
Since $\delta = C(\omega,\chi)s$, we may take
\[
\delta_0 = C(\omega,\lambda,\epsilon,\chi)s^{2+3p}(1-s)
\]
for some constant \(C(\omega,\lambda,\epsilon,\chi)\).

Hence, for any twisted Calabi flow starting from
$B_{\delta_0}^{2,\alpha}$, we have
\[
\varphi(t) \in \mathcal{V}_{\delta,\lambda}^{3,\alpha},
\quad \forall\, t \in [I,2I].
\]

For the twisted Calabi flow $\varphi(t)$ starting from any $\varphi_0\in B_{\delta_0}^{2,\alpha}$, we define
\[
T^s = \sup\bigl\{
\sigma \ge 2I : \varphi(t) \text{ exists on }  [0,\sigma] \text{ and }
\varphi(t) \in \mathcal{V}_{\delta,\lambda}^{3,\alpha},
\ \forall\, t \in [I,\sigma]
\bigr\}.
\]
We claim that \(T^s = +\infty\). Suppose to the contrary that
\(T^s < +\infty\).
Since
\(\varphi(T^s - I) \in \mathcal{V}_{\delta,\lambda}^{3,\alpha}
\subset B_\delta^{2,\alpha}\),
the short-time existence theorem together with the condition
\eqref{conditionofdelta} for $\delta$ ensures that the solution
extends to the interval \([T^s - I,\, T^s + I]\).
By the choice of $\delta$, we further have
\[
\varphi(t) \in \mathcal{V}_{\epsilon,\lambda}^{2},
\quad \forall\, t \in [T^s - I,\, T^s + I],
\qquad
\text{and} \qquad
\varphi(t) \in \mathcal{V}_{\epsilon,\lambda}^{4},
\quad \forall\, t \in [T^s,\, T^s + I].
\]
Hence \eqref{bootstrap-bound} still holds for all
\(t \in [I,\, T^s + I]\).
Since the a priori estimate \eqref{bootstrap-bound} is uniform in time, it follows that
\[
\|\varphi(t)\|_{c^{3,\alpha}(M)} < \delta,
\qquad \forall\, t \in [I,\, T^s + I],
\]
which contradicts the definition of \(T^s\).
Therefore, \(T^s = +\infty\).
	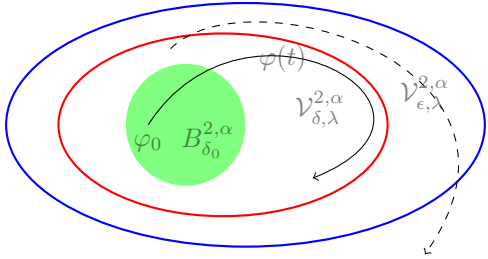
\begin{figure}[h]
	\centering
	\begin{tikzpicture}
		\pgfsetfillopacity{0.5}
		%\draw[help lines, color=red!5, dashed] (-0.5,-0.5) grid (4,4);
		\fill[green](0,0) circle (0.8);e
		\node at (0.3,-0.2) {$B^{2,\alpha}_{\delta_0}$};
		\node at (3.2, 0.4) {$\mV^{2,\alpha}_{\epsilon, \lambda}$};
		\node at (1.8, 0.2) {$\mV^{2,\alpha}_{\delta, \lambda}$};
		\draw[red, thick] (0.5,0) ellipse (2.2cm and 1.2cm);
		\draw[blue, thick] (0.8,0)ellipse (3.2cm and 1.6 cm);
		\draw[->] (-0.5,0)node[anchor=north]{$\varphi_0$} .. controls (0.8,2) and (4,0.2) .. (1.7,-0.7);
		\draw[dashed,->] (-0.2,1).. controls (0.8,2) and (5,0.8) .. (3.2,-1.7);
		\node at (1.3,0.88) {$\varphi(t)$};
	\end{tikzpicture}
	\caption{Estimate \eqref{bootstrap-bound} guarantees that the twisted Calabi flow $\varphi(t)$ starting from $B_{\delta_0}^{2,\alpha}$ remains in $\mathcal{V}^{3,\alpha}_{\delta,\lambda}$ for all $t \ge I$.
		The dashed curve illustrates that a twisted Calabi flow starting from $\mathcal{V}^{2,\alpha}_{\delta,\lambda}$ stays within $\mathcal{V}^{2,\alpha}_{\epsilon,\lambda}$ for $t \in [0,2I]$, but may leave $\mathcal{V}^{2,\alpha}_{\epsilon,\lambda}$ after $2I$.
	}
\end{figure}

\eqref{L1-bound} implies that for $t>t'$,
\begin{align}\label{L1decay}
	\|\varphi(t)-\varphi(t')\|_{L^1}\leq
	\frac{2C}{\lambda_0}\, \mC^s(\varphi(t'))^{1/2}\to 0\quad\text{as}\quad t'\to \infty.
\end{align}
Thus $\varphi(t)\to\varphi(\infty)$ in the $L^1$ sense. On the other hand, short-time existence theorem implies that for any $t>I$
\begin{align*}
	\|\varphi(t+I )\|_{C^{4,\alpha}(M)}\leq CI^{-\frac{1}{2}}\|\varphi(t)\|_{C^{2,\alpha}(M)}\leq  CI^{-\frac{1}{2}}\delta.
\end{align*}
Thus $\|\varphi(t)\|_{C^{4,\alpha}(M)}$ is uniformly bounded and $\varphi(t)\to \varphi(\infty)$ in $C^{4,\alpha'}$ by sequence for any $0<\alpha'<\alpha$. The twisted Calabi energy of $\varphi(\infty)$ is 0 implies that $\varphi(\infty)$ defines a twisted cscK metric. By the uniqueness theorem for twisted cscK metrics \cite[Theorem 4.5]{MR3671939}, the limiting potential $\varphi(\infty)=0$. Moreover, \eqref{L1decay} implies $\|\varphi(t)\|_{L^1(M)}$ decays exponentially.

Similar to \eqref{bootstrap-bound}, for $t>2I$, we have
\begin{align*}
	\|\varphi(t)\|_{C^{3,\alpha}}
	\le C\!\left(
	\frac{1-s}{s}\mC^s(\varphi(\frac{t}{2}))^{\frac{1}{p}}e^{-\frac{\lambda_0 t}{p}}
	+\frac{1}{s^{3p}}\!\left(
	\|\varphi(\frac{t}{2})\|_{L^1}
	+\frac{2}{\lambda_0}\mC^s(\varphi(\frac{t}{2}))^{\frac{1}{2}}
	\right)
	\right).
\end{align*}
Thus $\varphi\to 0$ in $C^{3,\alpha}(M)$ as $t\to\infty$ since the right hand side of the above inequality tends to 0. Since the Calabi energy $\mC(\varphi(\frac{t}{2}))$ and $\|\varphi(\frac{t}{2})\|_{L^1(M)}$ decay exponentially, $\|\varphi(t)\|_{C^{3,\alpha}(M)}$ converges to 0 exponentially.

By the short-time existence, if choose $\varphi(t)$ as a initial value of the twisted Calabi flow, we have
\begin{align*}
\|\varphi(I+t)\|_{C^{5,\alpha}(M)}\leq CI^{-\frac{1}{2}}\|\varphi(t)\|_{C^{3,\alpha}(M)},
\end{align*}
$\|\varphi(t)\|_{C^{3,\alpha}(M)}\to 0$ exponentially as $t \to\infty$ implies $\|\varphi(t)\|_{C^{5,\alpha}(M)}\to 0$ exponentially.
Similarly,  $\|\varphi\|_{C^{k,\alpha}(M)}\to 0$ as $t\to\infty$ exponentially. Hence $\varphi(t)\to 0$ in smooth sense.

	%\textcolor[rgb]{1.00,0.00,0.00}{The exponential decay of the twisted Calabi energy \eqref{energy-decay} implies that $R^s(\varphi(t)) \to \underline{R}^s$ as $t \to \infty$. By the uniqueness theorem for twisted cscK metrics \cite{MR3671939}, the limit must be the original metric $\omega$.}

	%\begin{figure}[h]
	%    \centering
	%    \begin{tikzpicture}
		%        \draw[->] (-0.5,0) -- (4,0) node[right] {$t$};
		%        \draw[->] (0,-0.5) -- (0,3) node[above] {$\|\varphi(t)\|_{C^{3,\alpha}}$};
		%
		%        % Stability regions
		%        \draw[red, thick] (0,0.8) ellipse (0.3 and 0.2);
		%        \node[red] at (0.5,0.5) {$B^{3,\alpha}_{\delta_0}$};
		%
		%        \draw[blue, thick] (0,1.5) ellipse (0.5 and 0.3);
		%        \node[blue] at (0.8,1.2) {$\mathcal{V}^{3,\alpha}_{\delta_1,\lambda}$};
		%
		%        \draw[green, thick] (0,2.2) ellipse (0.7 and 0.4);
		%        \node[green] at (1.2,2.0) {$\mathcal{V}^{3,\alpha}_{\epsilon,\lambda}$};
		%
		%        % Flow lines
		%        \draw[thick, ->] (0.3,0.8) .. controls (1,1.2) and (2,1.0) .. (3.5,0.1);
		%        \node at (2,1.3) {$\varphi(t)$};
		%
		%        \draw[dashed, thick, ->] (0.2,1.5) .. controls (1.5,1.8) and (2.5,1.2) .. (3.8,0.2);
		%
		%        \draw[thick, ->] (0.1,2.2) .. controls (1.8,2.5) and (3.0,1.5) .. (4.0,0.3);
		%    \end{tikzpicture}
	%    \caption{Stability diagram: Solutions starting in $B^{3,\alpha}_{\delta_0}$ remain in $\mathcal{V}^{3,\alpha}_{\delta_1,\lambda}$ and converge to the twisted cscK metric.}
	%    \label{fig:stability}
	%\end{figure}

\end{proof}

\begin{rem}
	The stability neighborhood $B^{2,\alpha}_{\delta_0}$ depends smoothly on the twisted weight $s \in (0,1)$.
	%As $s \to 1$, we recover the stability radius for the classical Calabi flow, while as $s \to 0$, the flow approaches the $J$-flow and the stability analysis requires different techniques.
\end{rem}

\begin{rem}
	Unlike the classical Calabi flow near a cscK metric, where convergence may be to a different metric in the same automorphism orbit, the uniqueness theorem for twisted cscK metrics ensures convergence to the original metric $\omega$.
\end{rem}

%\subsection{Applications and Corollaries}

The stability theorem has several important consequences for the geometry of twisted cscK metrics.

%\begin{cor}[Local Rigidity]
%Twisted cscK metrics are locally rigid: any sufficiently small deformation preserving the K\"ahler class must be isometric to the original metric.
%\end{cor}

\begin{cor}[Dynamic stability]
	The twisted Mabuchi K-energy is dynamically stable near twisted cscK metrics. More precisely, twisted cscK metrics are local minimizers of $\mM^s$, and the twisted Calabi flow provides a gradient descent to these minimizers.
\end{cor}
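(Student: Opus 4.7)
The plan is to combine the monotonicity of $\mM^s$ along the twisted Calabi flow with the convergence provided by \cref{thm:stability-main}.

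First, I would verify monotonicity. By the defining first variation of the twisted K-energy, $D(\mM^s)_\varphi(\phi) = -\int_M \phi(R^s - \underline{R}^s)\,\omega_\varphi^m$, so along any solution $\varphi(t)$ of the twisted Calabi flow,
\[
\frac{d}{dt}\mM^s(\varphi(t)) = -\int_M (R^s(\varphi) - \underline{R}^s)^2 \, \omega_\varphi^m \;\leq\; 0,
\]
with equality if and only if $\omega_\varphi$ is a twisted cscK metric. This is the content of the second assertion: the twisted Calabi flow is by construction the gradient flow of $\mM^s$ with respect to the Mabuchi $L^2$-metric on $\mH$, and $\mM^s$ decreases strictly along non-stationary orbits.

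Next, to obtain the local minimizer property, let $\omega$ be a twisted cscK metric so that the zero potential is critical for $\mM^s$, and let $\delta_0 > 0$ be the constant from \cref{thm:stability-main}. For any $\varphi_0 \in B^{2,\alpha}_{\delta_0}$, the flow exists for all time and converges smoothly to $0$. Combining monotonicity with smooth convergence yields
\[
\mM^s(\varphi_0) \;\geq\; \mM^s(\varphi(t)) \;\xrightarrow[t \to \infty]{}\; \mM^s(0),
\]
where the limit uses the continuity of $\mM^s$ under smooth (in particular $C^2$) convergence of K\"ahler potentials. Hence $\varphi = 0$ is a local minimizer of $\mM^s$ in the $C^{2,\alpha}$ topology.

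The only technical point to verify is the continuity of $\mM^s$ along the converging flow, which I expect to be routine: the standard integral representations of the Mabuchi functional $\mM$ and the $J_\chi$-functional depend continuously on $\varphi$ in $C^2$, and exponential smooth convergence is supplied by \cref{thm:stability-main}. Finally, I would note that strict local minimality---i.e.\ $\mM^s(\varphi_0) > \mM^s(0)$ unless $\varphi_0$ is constant---may be deduced either from the convexity of $\mM^s$ along smooth geodesics established in \cref{thm:convexity-K-energy}, or from the Berman--Berndtsson uniqueness theorem \cite{MR3671939} combined with the strict monotonicity above.
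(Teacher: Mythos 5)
Your proposal is correct and is precisely the chain of implications the paper leaves implicit (it states this corollary without a written proof). The monotonicity $\frac{d}{dt}\mM^s = -\int_M (R^s - \underline{R}^s)^2 \omega_\varphi^m \le 0$ follows directly from the gradient-flow structure, \cref{thm:stability-main} supplies global existence and smooth convergence to $0$ from the $\delta_0$-ball, and continuity of $\mM^s$ in $C^2$ then yields $\mM^s(\varphi_0) \ge \mM^s(0)$, i.e.\ $\omega$ is a local minimizer in the $C^{2,\alpha}$ topology. Your closing remark on strict minimality via geodesic convexity (\cref{thm:convexity-K-energy}) or Berman--Berndtsson uniqueness is a genuine refinement beyond what the corollary claims and is also sound.
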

%
%\begin{cor}[Persistence Under Small Perturbations]
%If $\omega$ is a twisted cscK metric, then for sufficiently small perturbations of the twisting form $\chi$ or the parameter $s$, the twisted cscK property persists in the same K\"ahler class.
%\end{cor}

%The stability theory developed in this section completes our analysis of the twisted Calabi flow, demonstrating its robust behavior near canonical metrics and providing a powerful tool for studying the existence problem for twisted cscK metrics through geometric flow methods.

\appendix

\section{Technical results and auxiliary estimates}
\label{sec:appendix}

This appendix collects various technical results and auxiliary estimates that are essential for the analysis in the main text but would disrupt the flow of the principal arguments. We provide complete proofs and detailed discussions of these foundational results.

\subsection{Interpolation and embedding results}

We begin with a refined interpolation inequality that plays a crucial role in the stability analysis of Section 5.

\begin{prop}[Gagliardo-Nirenberg type inequality] \label{prop:GN-inequality}
	For any $\varphi\in C^{3,\alpha}(M)$ and $p>1$, we have the interpolation inequality:
	\begin{align} \label{eq:GN-inequality}
		\|\varphi\|_{W^{2,p}(M,g)} \leq C \|\varphi\|_{C^{3,\alpha}(M)}^{\frac{3p-1}{3p}} \|\varphi\|_{L^1(M,g)}^{\frac{1}{3p}},
	\end{align}
	where the constant $C = C(m,p) > 0$ depends only on the dimension $m$ and the exponent $p$.
\end{prop}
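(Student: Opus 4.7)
The plan is to prove the inequality by combining two basic ingredients: an $L^p$--$L^\infty$--$L^1$ interpolation applied to each derivative of $\varphi$, together with a Gagliardo--Nirenberg interpolation on the compact manifold $M$ that estimates $L^1$ norms of intermediate derivatives in terms of $\|\varphi\|_{L^1}$ and $\|\nabla^3\varphi\|_{L^1}$. The inequality is homogeneous of degree one in $\varphi$, so everything reduces to balancing exponents.

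First I would decompose $\|\varphi\|_{W^{2,p}(M,g)}\leq\sum_{j=0}^{2}\|\nabla^j\varphi\|_{L^p(M,g)}$. For each $j\in\{0,1,2\}$, the pointwise inequality $|u|^p\leq\|u\|_{L^\infty}^{p-1}|u|$ integrates to
$$\|\nabla^j\varphi\|_{L^p}\leq\|\nabla^j\varphi\|_{L^\infty}^{(p-1)/p}\|\nabla^j\varphi\|_{L^1}^{1/p}\leq\|\varphi\|_{C^{3,\alpha}(M)}^{(p-1)/p}\|\nabla^j\varphi\|_{L^1}^{1/p},$$
using $\|\nabla^j\varphi\|_{L^\infty}\leq\|\varphi\|_{C^{3,\alpha}}$ for $j\leq 3$. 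The task is thus reduced to controlling $\|\nabla^j\varphi\|_{L^1}$ for $j=1,2$ with the correct exponents.

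Next I would invoke the Landau--Kolmogorov/Gagliardo--Nirenberg interpolation on the compact manifold $M$ (see, e.g., Aubin's monograph \emph{Nonlinear Analysis on Manifolds}):
$$\|\nabla^j\varphi\|_{L^1(M,g)}\leq C\bigl(\|\nabla^3\varphi\|_{L^1(M,g)}^{j/3}\|\varphi\|_{L^1(M,g)}^{1-j/3}+\|\varphi\|_{L^1(M,g)}\bigr),\quad j=1,2.$$
Since $\|\nabla^3\varphi\|_{L^1(M,g)}\leq|M|_g\|\nabla^3\varphi\|_{L^\infty}\leq C\|\varphi\|_{C^{3,\alpha}(M)}$, this becomes
$$\|\nabla^j\varphi\|_{L^1(M,g)}\leq C\bigl(\|\varphi\|_{C^{3,\alpha}(M)}^{j/3}\|\varphi\|_{L^1(M,g)}^{1-j/3}+\|\varphi\|_{L^1(M,g)}\bigr).$$
Substituting into the previous display gives, for each $j\in\{0,1,2\}$,
$$\|\nabla^j\varphi\|_{L^p}\leq C\|\varphi\|_{C^{3,\alpha}(M)}^{(3(p-1)+j)/(3p)}\|\varphi\|_{L^1(M,g)}^{(3-j)/(3p)}+C\|\varphi\|_{C^{3,\alpha}(M)}^{(p-1)/p}\|\varphi\|_{L^1(M,g)}^{1/p}.$$
For $j=2$ the first exponent is exactly $(3p-1)/(3p)$ and the second is exactly $1/(3p)$, producing the target bound. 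For $j=0,1$ and for the lower-order remainder, the $C^{3,\alpha}$ exponent falls short of $(3p-1)/(3p)$ while the $L^1$ exponent exceeds $1/(3p)$; I would use the crude bound $\|\varphi\|_{L^1}\leq|M|_g\|\varphi\|_{L^\infty}\leq C\|\varphi\|_{C^{3,\alpha}}$ to trade the surplus $L^1$ powers for the missing $C^{3,\alpha}$ powers. Because both sides of the target inequality have homogeneity degree one in $\varphi$, this rebalancing is exact.

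The main technical obstacle is the Gagliardo--Nirenberg step on the compact manifold. I would handle it by the standard partition-of-unity argument that reduces to the scale-invariant Euclidean inequality $\|\nabla^j u\|_{L^1(\mathbb{R}^n)}\leq C\|\nabla^3 u\|_{L^1(\mathbb{R}^n)}^{j/3}\|u\|_{L^1(\mathbb{R}^n)}^{1-j/3}$. The commutators arising when derivatives fall on the cut-off functions produce the harmless $\|\varphi\|_{L^1}$ lower-order correction, which is precisely what is absorbed by the exponent-balancing argument in the previous paragraph.
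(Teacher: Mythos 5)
Your proof is correct, but it takes a genuinely different route through the same two ingredients. The paper first passes to $W^{3,p}$ via the trivial embedding $\|\varphi\|_{W^{3,p}}\leq C\|\varphi\|_{C^{3,\alpha}}$, then applies the fractional Gagliardo--Nirenberg interpolation (Brezis--Mironescu) once, at the level of full Sobolev norms, $\|\varphi\|_{W^{2,p}}\leq C\|\varphi\|_{W^{3,p}}^{2/3}\|\varphi\|_{L^p}^{1/3}$, and only afterwards uses the Hölder trick $\|\varphi\|_{L^p}\leq\|\varphi\|_{L^\infty}^{(p-1)/p}\|\varphi\|_{L^1}^{1/p}$ on $\varphi$ itself. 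You invert the order: you apply the Hölder trick to each $\nabla^j\varphi$ first, reducing to $L^1$-control of derivatives, then interpolate in $L^1$ with the compact-manifold Gagliardo--Nirenberg inequality, and finally absorb the unavoidable lower-order $\|\varphi\|_{L^1}$ correction by trading surplus $L^1$ powers for $C^{3,\alpha}$ powers via $\|\varphi\|_{L^1}\leq C\|\varphi\|_{C^{3,\alpha}}$. Your exponent bookkeeping checks out: for $j=2$ the leading term gives exactly $(3p-1)/(3p)$ and $1/(3p)$, and the remainder terms have strictly larger $L^1$ exponent and can be rebalanced since both sides are degree-one homogeneous. The paper's version is marginally cleaner because the form of GN it uses (interpolating $W^{2,p}$ between $W^{3,p}$ and $L^p$, all with the same integrability index) holds on bounded domains and compact manifolds without a lower-order additive term, so no rebalancing step is needed; your version must carry the $+\|\varphi\|_{L^1}$ correction coming from the partition-of-unity argument, which you correctly identify and dispose of.
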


\begin{proof}
		Since $M$ is compact and $\varphi \in C^{3,\alpha}(M)$, we have the trivial bound:
	\begin{align} \label{eq:W3p-bound}
		\|\varphi\|_{W^{3,p}(M,g)} \leq C_1 \|\varphi\|_{C^{3,\alpha}(M)},
	\end{align}
	where $C_1$ depends on $m$, $p$, and the geometry of $(M,g)$.

	We use the Gagliardo-Nirenberg interpolation.
	Gagliardo-Nirenberg inequality originated from the work of Gagliardo (\cite{MR109940}) and Nirenberg (\cite{MR102740}). We use its modern form(see \cite{MR3813967}): for $1 \leq p,p_1, p_2\leq \infty < k$ and $s, s_1, s_2\geq 0$ satisfying:
	\[
	s_1\leq s_2, \quad s=\theta s_1+(1-\theta)s_2,\quad \frac{1}{p} = \frac{\theta}{p_1}   + \frac{1-\theta}{p_2},
	\]
	we have:
	\[
	\|\varphi\|_{W^{s,p}(M,g)} \leq C \|\varphi\|_{W^{s_1,p_1}(M,g)}^\theta \|\varphi\|_{W^{s_2,p_2}(M,g)}^{1-\theta}.
	\]
	for any $\varphi\in W^{s_1, p_1}(M)\cap W^{s_2, p_2}(M)$.
	Applying this with $s=2,s_1=0,s_2=3$, $\theta=1/3$, $p=p_1=p_2$, we obtain:
	\begin{align} \label{eq:GN-application}
		\|\varphi\|_{W^{2,p}(M,g)} \leq C_2 \|\varphi\|_{W^{3,p}(M,g)}^{2/3} \|\varphi\|_{L^p(M,g)}^{1/3}.
	\end{align}
	Using the $L^\infty$ bound and H\"older's inequality, we have
	\begin{align*}
		\|\varphi\|_{L^p(M,g)}
		&= \left( \int_M |\varphi|^p \, \omega^m  \right)^{1/p} \\
		&\leq \left( \int_M |\varphi| \cdot \|\varphi\|_{L^\infty}^{p-1} \,\omega^m  \right)^{1/p} \\
		&= \|\varphi\|_{L^\infty(M)}^{(p-1)/p} \|\varphi\|_{L^1(M,g)}^{1/p} \\
		&\leq \|\varphi\|_{C^{3,\alpha}(M)}^{(p-1)/p} \|\varphi\|_{L^1(M,g)}^{1/p}. \label{eq:Lp-L1-bound}
	\end{align*}
Substituting \eqref{eq:W3p-bound} and \eqref{eq:Lp-L1-bound} into \eqref{eq:GN-application}:
	\begin{align*}
		\|\varphi\|_{W^{2,p}(M,g)}
		&\leq C_2 \left( C_1 \|\varphi\|_{C^{3,\alpha}(M)} \right)^{2/3} \left( \|\varphi\|_{C^{3,\alpha}(M)}^{(p-1)/p} \|\varphi\|_{L^1(M,g)}^{1/p} \right)^{1/3} \\
		&= C_1^{2/3} C_2 \|\varphi\|_{C^{3,\alpha}(M)}^{\frac{2}{3} + \frac{p-1}{3p}} \|\varphi\|_{L^1(M,g)}^{\frac{1}{3p}} \\
		&= C \|\varphi\|_{C^{3,\alpha}(M)}^{\frac{3p-1}{3p}} \|\varphi\|_{L^1(M,g)}^{\frac{1}{3p}},
	\end{align*}
	which completes the proof.
\end{proof}

%\begin{rem}
%	The exponent $\frac{3p-1}{3p}$ is optimal in this context and reflects the balance between high-frequency control (through the $C^{3,\alpha}$ norm) and low-frequency information (through the $L^1$ norm). This inequality is particularly useful when we have good decay estimates for the $L^1$ norm along geometric flows.
%\end{rem}

\subsection{Metric equivalence and norm comparisons}

The following result establishes the equivalence of Sobolev norms under metric perturbations, which is fundamental for our analysis of the twisted Calabi flow.

\begin{prop}[Norm equivalence under metric perturbations] \label{prop:norm-equivalence}
	Suppose $\omega_\varphi$ is a K\"ahler metric satisfying $\lambda\omega \leq \omega_\varphi \leq \lambda^{-1}\omega$ for some $\lambda > 0$, and $\|\varphi\|_{C^{3,\alpha}(M)} < \epsilon_0$ for sufficiently small $\epsilon_0$. Then for any $f \in W^{2,p}(M,g)$ with $1 < p < \infty$, we have:
	\begin{align}
		C^{-1} \|f\|_{W^{2,p}(M,g)} \leq \|f\|_{W^{2,p}(M,g_\varphi)} \leq C \|f\|_{W^{2,p}(M,g)},
	\end{align}
	where $C = C(\lambda, p, \epsilon_0, m) > 0$.
\end{prop}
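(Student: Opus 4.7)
The plan is to reduce the norm equivalence to three pointwise/integral comparisons: (i) a comparison of volume forms; (ii) a comparison of the pointwise Hermitian inner products induced by $g$ and $g_\varphi$ on covectors and symmetric bilinear forms; and (iii) a control on the difference of the Levi-Civita connections of $g$ and $g_\varphi$. Once these three ingredients are in place, the $W^{2,p}$-comparison follows by integrating the pointwise estimates and adding the resulting $L^p$ inequalities for $f$, $\nabla f$, and $\nabla^2 f$.

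First, from the assumption $\lambda\omega\le\omega_\varphi\le\lambda^{-1}\omega$ we immediately obtain $\lambda^m\omega^m\le\omega_\varphi^m\le\lambda^{-m}\omega^m$, and for any covector $\xi$,
\[
\lambda\,|\xi|_g^2 \le |\xi|_{g_\varphi}^2 \le \lambda^{-1}|\xi|_g^2,
\]
with an analogous two-sided bound on symmetric $2$-tensors with a constant depending only on $\lambda$ and $m$. These immediately yield
\[
C^{-1}\|f\|_{L^p(M,g)}\le\|f\|_{L^p(M,g_\varphi)}\le C\|f\|_{L^p(M,g)},
\]
and the same kind of two-sided bound for $\|\nabla f\|_{L^p}$ with $\nabla f$ viewed as a covector, since $df$ is independent of the metric used to differentiate.

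The essential step is the Hessian comparison. In local holomorphic coordinates the only mixed components of $\nabla^{g_\varphi}df$ are $f_{\alpha\bar\beta}$, which coincide with the corresponding components of $\nabla^g df$ since $\partial\bar\partial f$ is tensorial; for the pure components I would write
\[
(\nabla^{g_\varphi}df)_{\alpha\beta}=\partial_\alpha\partial_\beta f-\Gamma^\gamma_{\alpha\beta}(g_\varphi)\,\partial_\gamma f,
\]
and similarly with $g$ in place of $g_\varphi$, so that
\[
(\nabla^{g_\varphi}df-\nabla^g df)_{\alpha\beta}=-\bigl(\Gamma^\gamma_{\alpha\beta}(g_\varphi)-\Gamma^\gamma_{\alpha\beta}(g)\bigr)\partial_\gamma f.
\]
A direct computation gives
\[
\Gamma^\gamma_{\alpha\beta}(g_\varphi)-\Gamma^\gamma_{\alpha\beta}(g)
=g_\varphi^{\gamma\bar\delta}\nabla^g_\alpha(g_\varphi-g)_{\beta\bar\delta}
=g_\varphi^{\gamma\bar\delta}\,\varphi_{\alpha\beta\bar\delta},
\]
which is bounded pointwise by $C(\lambda)\,\|\varphi\|_{C^3(M)}\le C(\lambda)\epsilon_0$. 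Taking pointwise $g_\varphi$-norms and using the equivalence of the two pointwise inner products, I get
\[
\bigl|\nabla^{g_\varphi}df-\nabla^g df\bigr|_{g_\varphi}\le C(\lambda)\epsilon_0\,|df|_g.
\]

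Combining this with the comparison of pointwise norms on symmetric $2$-tensors, integrating with respect to either volume form, and absorbing constants yields
\[
C^{-1}\|\nabla^2 f\|_{L^p(M,g)}\le\|\nabla^2 f\|_{L^p(M,g_\varphi)}\le C\|\nabla^2 f\|_{L^p(M,g)}+C\epsilon_0\|\nabla f\|_{L^p(M,g)},
\]
with the reverse inequality obtained symmetrically by exchanging the roles of $g$ and $g_\varphi$ (using the two-sided bound $\lambda\omega\le\omega_\varphi\le\lambda^{-1}\omega$ to bound $|g_\varphi^{\gamma\bar\delta}|$). Summing the $L^p$-comparisons for $f$, $\nabla f$ and $\nabla^2 f$ then gives the required two-sided bound with $C=C(\lambda,p,\epsilon_0,m)$. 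The main technical nuisance—not a deep obstacle, but where most of the care is needed—is keeping track of the Christoffel difference in a coordinate-free way and making sure the $\epsilon_0$ coefficient multiplies only lower-order terms so that, if desired, one could avoid an interpolation step; otherwise for small enough $\epsilon_0$ the Christoffel correction is absorbed into the $\|\nabla^2 f\|_{L^p}$ part by standard interpolation.
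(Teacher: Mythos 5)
Your proposal is correct and follows the same approach as the paper's proof: compare volume forms for the $L^p$-level, observe that the gradient pointwise norms are $\lambda$-equivalent, and reduce the Hessian comparison to bounding the difference of Christoffel symbols, which is controlled by $C(\lambda)\epsilon_0$ in view of the $C^{3,\alpha}$-bound on $\varphi$. The only cosmetic difference is that you package the Christoffel difference in the clean covariant form $\Gamma^\gamma_{\alpha\beta}(g_\varphi)-\Gamma^\gamma_{\alpha\beta}(g)=g_\varphi^{\gamma\bar\delta}\nabla^g_\alpha\varphi_{\beta\bar\delta}$, whereas the paper expands it into the two terms $(g_\varphi^{\xi\bar\eta}-g^{\xi\bar\eta})\partial_\alpha g_{\beta\bar\eta}+g_\varphi^{\xi\bar\eta}\partial_\alpha\varphi_{\beta\bar\eta}$; these are the same quantity.
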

\begin{proof}
	We prove the two inequalities separately, analyzing how each term in the $W^{2,p}$ norm transforms under the metric change.

	The metric equivalence implies:
	\begin{align} \label{eq:volume-comparison}
		\lambda^m \omega^m  \leq \omega^m_\varphi \leq \lambda^{-m} \omega^m .
	\end{align}
	This immediately gives the $L^p$ norm equivalence:
	\[
	\lambda^{m/p} \|f\|_{L^p(M,g)} \leq \|f\|_{L^p(M,g_\varphi)} \leq \lambda^{-m/p} \|f\|_{L^p(M,g)}.
	\]
	For the gradient term, we have:
	\[
	|\nabla_\varphi f|_\varphi^2 = g_\varphi^{i\bar{j}} \partial_i f \partial_{\bar{j}} f.
	\]
	It follows
	\begin{align*}
		\lambda^{\frac{p}{2}}|\nabla u|_{g }^{p}\leq|\nabla_\varphi u|^p_\varphi\leq \lambda^{-\frac{p}{2}}|\nabla u|_{g }^{p}
	\end{align*}
	and
	\begin{align} \label{eq:gradient-comparison}
		\lambda^{\frac{m}{p}+\frac{1}{2}} \|\nabla f\|_{L^p(M,g)} \leq \|\nabla_\varphi f\|_{L^p(M,g_\varphi)} \leq \lambda^{-\frac{m}{p}-\frac{1}{2}}\|\nabla f\|_{L^p(M,g)}.
	\end{align}

The Hessians with respect to different metrics are related by:
\begin{align*}
\nabla_\alpha\nabla_{\bar\beta}f=\nabla^\varphi_\alpha\nabla^\varphi_{\bar\beta}f=\partial_\alpha\partial_{\bar \beta}f
\end{align*}
and
	\begin{align*}
		\nabla^\varphi_\alpha \nabla^\varphi_\beta f &= \nabla_\alpha \nabla_\beta f - \Gamma^{\varphi,\xi}_{\alpha\beta} \nabla_\xi f + \Gamma^\xi_{\alpha\beta} \nabla_\xi f \\
		&= \nabla_\alpha \nabla_\beta f + (\Gamma^\xi_{\alpha\beta} - \Gamma^{\varphi,\xi}_{\alpha\beta}) \nabla_\xi f.
	\end{align*}
	The Christoffel symbol difference can be computed as:
	\begin{align*}
		\Gamma^{\varphi,\xi}_{\alpha\beta} - \Gamma^\xi_{\alpha\beta}
		&= g_\varphi^{\xi\bar{\eta}} (\partial_\alpha g_{\beta\bar{\eta}} + \varphi_{\alpha\beta\bar{\eta}}) - g^{\xi\bar{\eta}} \partial_\alpha g_{\beta\bar{\eta}} \\
		&= (g_\varphi^{\xi\bar{\eta}} - g^{\xi\bar{\eta}}) \partial_\alpha g_{\beta\bar{\eta}} + g_\varphi^{\xi\bar{\eta}} \varphi_{\alpha\beta\bar{\eta}}.
	\end{align*}
	Using the identity $g_\varphi^{\xi\bar{\eta}} - g^{\xi\bar{\eta}} = -g^{\xi\bar{\delta}} \varphi_{\gamma\bar{\delta}} g_\varphi^{\gamma\bar{\eta}}$ and the bound $\|\varphi\|_{C^{3,\alpha}} < \epsilon_0$, we obtain:
	\[
	|\Gamma^{\varphi,\xi}_{\alpha\beta} - \Gamma^\xi_{\alpha\beta}| \leq C\epsilon_0.
	\]
	Therefore:
	\begin{align*}
		|\nabla^\varphi \nabla^\varphi f - \nabla\nabla f|_g \leq C\epsilon_0 |\nabla f|_g.
	\end{align*}
	Combining with the volume comparison \eqref{eq:volume-comparison}:
	\begin{align} \label{eq:hessian-comparison}
		\|\nabla^\varphi \nabla^\varphi f\|_{L^p(M,g_\varphi)}
		&\leq \|\nabla\nabla f\|_{L^p(M,g_\varphi)} + C\epsilon_0 \|\nabla f\|_{L^p(M,g_\varphi)} \\
		&\leq \lambda^{-m/p} \|\nabla\nabla f\|_{L^p(M,g)} + C\epsilon_0 c_2 \|\nabla f\|_{L^p(M,g)}.
	\end{align}
	The reverse inequality follows similarly.

	Combining \eqref{eq:volume-comparison}, \eqref{eq:gradient-comparison}, and \eqref{eq:hessian-comparison}, and choosing $\epsilon_0$ sufficiently small, we obtain the desired norm equivalence.
\end{proof}

\begin{rem}
	This result is crucial for transferring elliptic estimates between different background metrics. The smallness assumption on $\|\varphi\|_{C^{3,\alpha}}$ ensures that the metric perturbation is sufficiently regular to preserve the elliptic structure.
\end{rem}

\bigskip

\noindent {\it\bf{Acknowledgements}}:
This work was completed during  Jie He's visit to Haozhao Li at the University of Science and Technology of China(USTC). This visit was supported by the Mathematics Tianyuan Fund of National Natural Science Foundation of China   No. 12426674 and No. 12426669.
Jie He wishes to thank the visiting scholar program and USTC for its hospitality.

\medskip
\noindent {\it\bf{Conflict of Interest}}: The authors declare no conflict of interest.

\medskip
\noindent {\it\bf{Data availability}}:
No data was used for the research described in the article.

\bibliographystyle{amsplain}
\providecommand{\bysame}{\leavevmode\hbox to3em{\hrulefill}\thinspace}
\providecommand{\MR}{\relax\ifhmode\unskip\space\fi MR }
% \MRhref is called by the amsart/book/proc definition of \MR.
\providecommand{\MRhref}[2]{%
	\href{http://www.ams.org/mathscinet-getitem?mr=#1}{#2}
}
\providecommand{\href}[2]{#2}

\end{document}